\documentclass[11pt, a4paper]{article} 
\usepackage[latin1]{inputenc}
\usepackage{amsmath}
\usepackage{amsfonts}
\usepackage{amssymb}
\usepackage{amsthm}
\usepackage{amsxtra}
\usepackage{enumerate}

\usepackage{titlesec}
\titleformat{\subsection}[hang]{\normalfont\bfseries}{\thesubsection}{1em}{}

\usepackage{titlesec}

\titlespacing\section{0pt}{3.5ex plus 0.5ex minus .2ex}{0.3ex plus .2ex}
\titlespacing\subsection{0pt}{2.5ex plus 0.5ex minus .2ex}{0.3ex plus .2ex}
\titlespacing\subsubsection{0pt}{2.5ex plus 0.5ex minus .2ex}{0.3ex plus .2ex}

\usepackage{hyperref} 
\usepackage{fullpage}
\usepackage{setspace}

\usepackage{mathrsfs}
\usepackage{url}
\usepackage[all]{xy}
\usepackage{comment}
\usepackage{marginnote}
\usepackage{color}


\usepackage{mathtools}



\numberwithin{equation}{subsection}

\newtheorem{theorem}[equation]{Theorem}
\newtheorem*{theorem*}{Theorem}
\newtheorem{lemma}[equation]{Lemma}
\newtheorem*{lemma*}{Lemma}
\newtheorem{proposition}[equation]{Proposition}
\newtheorem*{proposition*}{Proposition}
\newtheorem{corollary}[equation]{Corollary}
\newtheorem*{corollary*}{Corollary}
\newtheorem{conjecture}[equation]{Conjecture}
\newtheorem*{conjecture*}{Conjecture}

\theoremstyle{definition}
\newtheorem{definition}[equation]{Definition}
\newtheorem*{definition*}{Definition}

\newtheorem*{notation*}{Notation}

\newtheorem*{choice*}{Choice}
\newtheorem{remark}[equation]{Remark}
\newtheorem*{remark*}{Remark}

\newtheorem*{axiom*}{Axiom}

\allowdisplaybreaks[1]

\newcommand{\abs}[1]{\left\lvert#1\right\rvert}

\DeclareMathOperator{\End}{End}

\DeclareMathOperator{\ad}{ad}

\DeclareMathOperator{\ind}{ind}

\DeclareMathOperator{\Hom}{Hom}

\DeclareMathOperator{\red}{red}
\DeclareMathOperator{\Mod}{Mod-}
\DeclareMathOperator{\Res}{Res}

\DeclareMathOperator{\id}{id}

\DeclareMathOperator{\ext}{ext}
\DeclareMathOperator{\aff}{aff}

\DeclareMathOperator{\Gal}{Gal}

\DeclareMathOperator{\normal}{norm}

\DeclareMathOperator{\GL}{GL}
\DeclareMathOperator{\SL}{SL}

\DeclareMathOperator{\Uch}{Uch}

\DeclareMathOperator{\dc}{dc}
\DeclareMathOperator{\qs}{qs}
\DeclareMathOperator{\dual}{d}


\newcommand{\unr}{{\textup{unr}}}

\newcommand{\gen}{{\textup{gen}}}

\newcommand{\Krel}{{\mathcal{K}\textup{-rel}}}
\newcommand{\Kthetarel}{{\mathcal{K}_{\theta}\textup{-rel}}}

\newcommand{\isoarrow}{\stackrel{\sim}{\longrightarrow}}

\newcommand{\Nheart}{N(\rho_{M})^{\heartsuit}_{[x_{0}]_{M}}}

\newcommand{\Wheart}{W(\rho_{M})^{\heartsuit}_{[x_{0}]_{M}}}
\newcommand{\Wthetaheart}{W(u_{\rho_{M}})^{\heartsuit}_{[x_{0}]_{M}}}

\newcommand{\Waff}{W(\rho_M)_{\mathrm{aff}}}
\newcommand{\Wthetaaff}{W(u_{\rho_M})_{\mathrm{aff}}}
\newcommand{\Waffz}{W(\rho_{M^0})_{\mathrm{aff}}}
\newcommand{\Wthetaaffz}{W(u_{\rho_{M^0}})_{\mathrm{aff}}}
\newcommand{\Wzero}{\Omega(\rho_{M})}
\newcommand{\Wzeroz}{\Omega(\rho_{M^0})}
\newcommand{\sfG}{\mathsf{G}}   
\newcommand{\sfM}{\mathsf{M}}   
\newcommand{\sfH}{\mathsf{H}}   
\newcommand{\sfP}{\mathsf{P}}   
\newcommand{\sfT}{\mathsf{T}}   
\newcommand{\Coeff}{\cC}   


\DeclareMathOperator{\Rep}{Rep}
\newcommand{\IEC}{\mathfrak{I}} 


\newcommand{\bC}{\mathbb{C}}

\newcommand{\bQ}{\mathbb{Q}}
\newcommand{\bR}{\mathbb{R}}

\newcommand{\bZ}{\mathbb{Z}}

\newcommand{\cA}{\mathcal{A}}
\newcommand{\cB}{\mathcal{B}}
\newcommand{\cC}{\mathcal{C}}

\newcommand{\cH}{\mathcal{H}}

\newcommand{\cK}{\mathcal{K}}

\newcommand{\cO}{\mathcal{O}}


\newcommand{\ff}{\mathfrak{f}}

\newcommand{\fs}{{\mathfrak{s}}}

\newcommand{\fS}{\mathfrak{S}}

\setlength{\parindent}{0pt} 
\setlength{\parskip}{1ex plus 0.5ex minus 0.2ex} 


\begin{document}
\author{Kazuma Ohara}

\AtEndDocument{\bigskip{\footnotesize%
	\par
	\textsc{%
	Max Planck Institute for Mathematics, Vivatsgasse 7, 53115 Bonn, Germany}
	\par
       \textit{E-mail address}: \texttt{kazuma@mpim-bonn.mpg.de}
}}

\title{On parameters of Hecke algebras for $p$-adic groups} 
\date{}
\maketitle
\begin{abstract}
Let $F$ be a non-archimedean local field with residue characteristic $p$ and $G$ be a connected reductive group defined over $F$.
In earlier joint works with Jeffrey D. Adler, Jessica Fintzen, and Manish Mishra, we proved that the Hecke algebras attached to types constructed by Kim and Yu are isomorphic to the Hecke algebras attached to depth-zero types.
Note that if $G$ splits over a tamely ramified extension of $F$ and $p$ does not divide the order of the absolute Weyl group of $G$, such Hecke algebras cover the Hecke algebras attached to arbitrary Bernstein blocks.
We also proved that for a depth-zero type $(K, \rho)$, the corresponding Hecke algebra $\cH(G(F), (K, \rho))$ has an explicit description as a semi-direct product of an affine Hecke algebra $\mathcal{H}(W(\rho_M)_{\mathrm{aff}}, q)$ with a twisted group algebra $\mathbb{C}[\Omega(\rho_{M}), \mu]$, generalizing prior work of Morris.

In this paper, we show that the affine Hecke algebra $\mathcal{H}(W(\rho_M)_{\mathrm{aff}}, q)$ appearing in the description of the Hecke algebra $\mathcal{H}(G(F), (K, \rho))$ attached to a depth-zero type $(K, \rho)$ is isomorphic to the one attached to a unipotent type for a connected reductive group splitting over an unramified extension of $F$.
This makes it possible to calculate the parameters of the affine Hecke algebras for depth-zero types and types constructed by Kim and Yu explicitly.
In particular, we prove a version of Lusztig's conjecture that the parameters of the Hecke algebra attached to an arbitrary Bernstein block agree with those of a unipotent Bernstein block under the assumption that $G$ splits over a tamely ramified extension of $F$ and $p$ does not divide the order of the absolute Weyl group of $G$.
\end{abstract}

{
	\renewcommand{\thefootnote}{}  
	\footnotetext{MSC2020: 22E50, 20C08, 20C33}
	\footnotetext{Keywords: $p$-adic groups, smooth representations, Hecke algebras, types, unipotent representations}
}

\setcounter{tocdepth}{2}

\tableofcontents

\numberwithin{equation}{section}
\section{Introduction}

The category of all smooth, complex representations of a $p$-adic group decomposes as a product of
indecomposable,
full subcategories, called Bernstein blocks, each of which is equivalent to modules over a Hecke algebra under minor tameness assumptions. Therefore knowing the explicit structure of these Hecke algebras and their modules yields an understanding of the category of smooth representations.
In \cite{2024arXiv240807805A, 2024arXiv240807801A}, we showed that under the above minor tameness assumptions, the Hecke algebra attached to an arbitrary Bernstein block is isomorphic to a depth-zero Hecke algebra and has an explicit description as a semi-direct product of an affine Hecke algebra with a twisted group algebra, generalizing prior work of Morris (\cite{Morris}).
In this paper, we further prove that the affine Hecke algebra appearing in the description above is isomorphic to the affine Hecke algebra appearing in the similar description of the Hecke algebra attached to a Bernstein block consisting of \emph{unipotent} representations.

We explain our results in more detail.
Let $F$ be a non-archimedean local field with residue field $\ff$ and $G$ be a connected reductive group defined over $F$.
According to the \emph{Bernstein decomposition} (\cite{MR771671}), the category $\Rep(G(F))$ of smooth, complex representations of $G(F)$ decomposes into a product of indecomposable, full subcategories, $\Rep^\fs(G(F))$, that are called \emph{Bernstein blocks} and that are indexed by the set of inertial equivalence classes $\IEC(G)$, i.e., equivalence classes $[L, \sigma]_{G}$ of pairs $(L, \sigma)$ consisting of a Levi subgroup $L$ of (a parabolic subgroup of) $G$ and an irreducible supercuspidal representation $\sigma$ of $L(F)$:
\[
\Rep(G(F))=\prod_{\fs\in\IEC(G)}\Rep^\fs(G(F)).
\]
A pair $(K, \rho)$ consisting of a compact, open subgroup $K \subseteq G(F)$ and an irreducible smooth representation $(\rho, V_\rho)$ of $K$ is called an $\fs$-type for $\fs \in \IEC(G)$
if $\Rep^\fs(G(F))$ contains precisely those smooth representations $\pi$ of $G(F)$
such that every irreducible subquotient of $\pi$ contains $\rho$ upon restriction to $K$.
If $(K, \rho)$ is an $\fs$-type, then we have an equivalence of categories between $\Rep^{\fs}(G(F))$ and the category of right unital $\cH(G(F), (K, \rho))$-modules:
\[
\Rep^{\fs}(G(F)) \simeq \Mod \cH(G(F), (K, \rho)),
\]
where  $\cH(G(F),(K, \rho))$ denotes the Hecke algebra attached to $(K, \rho)$, i.e., as a vector space the collection
of all compactly supported,
$\End_{\bC}(V_\rho)$-valued
functions on $G(F)$ that transform
on the left and right under $K$  
by $\rho$.
The algebra structure on $\cH(G(F),(K, \rho))$ is given by convolution,
see \cite[Section~2.2]{2024arXiv240807801A}
for details.
For some special classes of groups, explicit constructions of types and descriptions of their Hecke algebras were given, for instance, Bushnell and Kutzko (\cite{MR1204652}) for the general linear groups $\GL_{n}$, Goldberg and Roche 
(\cite{MR1901371,GoldbergRoche-Hecke}) for the special linear groups $\SL_{n}$, S\'echerre and Stevens (\cite{secherre-stevens:glnd-4}) for all inner forms of $\GL_n$, Miyauchi and Stevens
(\cite{MR3157998}) for types associated to maximal proper Levi subgroups of classical groups,
for more details, see \cite[Section~1.2]{2024arXiv240807801A}.

On the other hand, for general connected reductive groups but restricting the class of representations to \emph{depth-zero representations}, Moy and Prasad (\cite{MR1253198, MR1371680}) and, independently, Morris (\cite{MR1713308}) provided a construction of types called \emph{depth-zero types}.
A depth zero type $(K, \rho)$ consists of a compact, open subgroup $K$ of $G(F)$ containing a parahoric subgroup $G(F)_{x, 0}$ of $G(F)$ as a normal subgroup and an irreducible representation $\rho$ of $K$ whose restriction to $G(F)_{x, 0}$ contains the inflation of an irreducible, cuspidal representation $\rho_{0}$ of $\sfG^{\circ}_{x}(\ff)$, where $\sfG^{\circ}_{x}$ denotes the reductive quotient of the special fiber of the connected parahoric group scheme associated to $G(F)_{x, 0}$.
Building upon the construction of depth-zero types and the construction of supercuspidal representations by Yu (\cite{Yu}) and using the theory of covers introduced by Bushnell and Kutzko (\cite{BK-types}), Kim and Yu (\cite{Kim-Yu, MR4357723}) provided a construction of types for a connected reductive group that splits over a tamely ramified extension of $F$. This construction yields types for every Bernstein block if the characteristic $p$ of $\ff$ does not divide the order $\abs{W}$ of the absolute Weyl group $W$ of $G$ by \cite{Fi-exhaustion}. Thus, understanding the structure of the corresponding Hecke algebras and their categories of modules leads to an understanding of the whole category of smooth representations of $G(F)$ if $G$ splits over a tamely ramified extension of $F$ and $p \nmid \abs{W}$.
In \cite{2024arXiv240807805A, 2024arXiv240807801A}, we showed that the Hecke algebra attached to an arbitrary type constructed by Kim and Yu is isomorphic to the Hecke algebra attached to a depth-zero type.
Moreover, we generalized \cite[7.12~Theorem]{Morris} and 
described the structure of the Hecke algebra $\cH(G(F), (K, \rho))$ attached to a depth-zero type $(K, \rho)$ via an explicit isomorphism 
\begin{equation}
\label{introheckealgebraisomrecall}
\cH(G(F), (K, \rho)) \isoarrow \bC[\Wzero, \mu] \ltimes \cH(\Waff, q),
\end{equation}
where $\Waff$ is an affine Weyl group
that is a normal subgroup of a larger symmetry group $\Wzero \ltimes \Waff$,
$q \colon S \longrightarrow \bC^\times$
is a parameter function on a set $S$ of simple reflections generating $\Waff$,
$\cH(\Waff, q)$ denotes the corresponding affine Hecke algebra,
and
$\bC[\Wzero, \mu]$ denotes the group algebra of $\Wzero$
twisted by a $2$-cocycle $\mu$.
In \cite[Proposition~3.9.1]{2024arXiv240807801A}, which is based on \cite[Theorem~3.18 (ii)]{MR570873}, we provided a way to calculate the parameter function $q$ using a calculation involving representations of finite reductive groups.
On the other hand, an explicit description of the parameter function $q$ has not been obtained.

In this paper, we prove that the affine Hecke algebra $\cH(\Waff, q)$ in \eqref{introheckealgebraisomrecall} is isomorphic to the affine Hecke algebra $\cH(\Wthetaaff, q_{\theta})$ appearing in the explicit isomorphism
\[
\cH(G_{\theta}(F), (K_{\theta}, u)) \isoarrow \bC[\Omega(u_{\rho_{M}}), \mu_{\theta}] \ltimes \cH(\Wthetaaff, q_{\theta})
\]
for a \emph{unipotent type} $(K_{\theta}, u)$ for a connected reductive group $G_{\theta}$ that splits over an unramified extension of $F$  (Theorem~\ref{mainthm:heckealgebraisom}).
Here, a unipotent type means a depth-zero type $(K, \rho)$ such that $\rho_{0}$ is a unipotent representation of $\sfG^{\circ}_{x}(\ff)$.
Similarly, an irreducible smooth representation $\pi$ of $G(F)$ whose restriction to a parahoric subgroup $G(F)_{x, 0}$ contains the inflation of a unipotent, cuspidal representation of $\sfG^{\circ}_{x}(\ff)$ is called a \emph{unipotent representation} of $G(F)$.
Unipotent representations are the representations that are expected to correspond to the $L$-parameters that are trivial on the inertia subgroup of the Weil group of $F$ via the local Langlands correspondence.
The structure and representation theory of Hecke algebras attached to unipotent types were well-studied by Lusztig (\cite{Lusztig95unipotent, Lusztig02unipotentII}), and Lusztig used it to classify the unipotent representations of adjoint simple algebraic groups that split over an unramified extension.
Lusztig's classification of unipotent representations provides us an important case of an explicit local Langlands correspondence.
The restriction to unramified adjoint groups was later removed by Solleveld (\cite{Solleveld23unipotentLLC}, \cite{MR4620884}), building on foundational work by many mathematicians, including \cite{MR3845761, MR3969871, 2017arXiv170103593A, MR4167790, MR4515286}.

The main result Theorem~\ref{mainthm:heckealgebraisom} of this paper makes it possible to calculate the parameters of the affine Hecke algebra $\cH(\Waff, q)$ in \eqref{introheckealgebraisomrecall} since the parameters of a unipotent Hecke algebra were explicitly described by Lusztig in \cite{Lusztig95unipotent, Lusztig02unipotentII} (see also \cite[Table\,1]{MR4847675}).
This reduction of the calculation of parameters to the unipotent case was conjectured by Lusztig in \cite{2020arXiv200608535L}.

Moreover, Theorem~\ref{mainthm:heckealgebraisom} can also be regarded as the ``affine Hecke algebra part'' of a reduction to unipotent Hecke algebra isomorphism.
Since the Bernstein blocks consisting of unipotent representations are much easier than general Bernstein blocks, constructing an isomorphism between the Hecke algebra for an arbitrary Bernstein block and the Hecke algebra for a unipotent Bernstein block is an important problem.
In the case of $\GL_{n}$, the Hecke algebras for arbitrary Bernstein blocks are isomorphic to the Hecke algebras for unipotent Bernstein blocks by the work of Bushnell and Kutzko (\cite{MR1204652}).
This reduction to unipotent result is an important tool for a variety of applications, including its recent use in the construction of a part of a categorical local Langlands correspondence (\cite{BZCHN}). 
Moreover, for more general reductive groups, the reduction to unipotent result was previously obtained by Roche (\cite{MR1621409}) in the case where $F$ has characteristic zero, the group $G$ splits over $F$, the characteristic $p$ of $\ff$ is not too small, and the type $(K, \rho)$ is for a Bernstein block consisting of principal series representations.
Roche proved that the Hecke algebra $\cH(G(F), (K, \rho))$ is isomorphic to the Iwahori-Hecke algebra $\cH(\widetilde{H}, 1)$ of a disconnected reductive groups $\widetilde{H}$ under these assumptions (\cite[Theorem~8.2]{MR1621409}).
The main result of this paper is a generalization of the restriction of Roche's isomorphism $\cH(G(F), (K, \rho)) \isoarrow \cH(\widetilde{H}, 1)$ to the affine Hecke algebra part $\cH(\Waff, q)$ of $\cH(G(F), (K, \rho))$.

\subsection*{Sketch of the proof of the main theorem and the structure of the paper}

For a finite-dimensional, length-two representation $\pi$ of a group $G$, we define the $q$-parameter $q(\pi)$ attached to $\pi$ by $q(\pi) = \frac{\dim(\pi_{1})}{\dim(\pi_{2})}$, where $\pi_{1}$ and $\pi_{2}$ are the irreducible constitutes of $\pi$ such that $\dim(\pi_{1}) \ge \dim(\pi_{2})$.
Then the parameters of the affine Hecke algebra $\cH(\Waff, q)$ in \eqref{introheckealgebraisomrecall} agree with $q(\ind_{K_1}^{K_2} (\rho))$ for suitable compact, open subgroups $K_1 \subseteq K_2 \subseteq G(F)$ and irreducible smooth representations $\rho$ of $K_{1}$ (see Proposition~\ref{calculationofqs}).

In Section~\ref{section:comparisonofqparameter}, we prove that the $q$-parameters attached to compactly induced representations are preserved by certain group homomorphisms.

In Section~\ref{section:finitegroupcase}, we study the $q$-parameter $q(R_{\sfM}^{\sfG} (\rho_{\sfM}))$ attached to the parabolic induction $R_{\sfM}^{\sfG} (\rho_{\sfM})$ of an irreducible, cuspidal representation $\rho_{\sfM}$ of a maximal Levi subgroup $\sfM$ of a connected reductive group $\sfG$ defined over a finite field $\ff$.
We prove in Proposition~\ref{prop:reductiontounipotentqparameters} and Corollary~\ref{corollary:reductiontounipdualver} that the $q$-parameter $q(R_{\sfM}^{\sfG} (\rho_{\sfM}))$ agrees with the $q$-parameter attached to the parabolic induction of a unipotent, cuspidal representation.
The key ingredients here are the reduction arguments using the result in Section~\ref{section:comparisonofqparameter} and Jordan decompositions for Lusztig series.

We prove the main theorem of this paper in Section~\ref{section:heckealgebraisom}.
In $\S$\ref{subsec:reviewofAFMO}, we recall the description of the Hecke algebra $\cH(G(F), (K_{x_0}, \rho_{x_0}))$ attached to a depth-zero type $(K_{x_0}, \rho_{x_0})$ given in \cite[Section~5]{2024arXiv240807801A}.
In $\S$\ref{subsec:defofGthetaandu}, we define a connected reductive subgroup $G_{\theta}$ that splits over an unramified extension of $F$ and a unipotent type $(K_{\theta, x_0}, \rho_{\theta, x_0})$ for $G_{\theta}$ from the depth-zero type $(K_{x_0}, \rho_{x_0})$.
In Theorem~\ref{mainthm:heckealgebraisom}, we prove that the affine Hecke algebras $\cH(\Waff, q)$ and $\cH(\Wthetaaff, q_{\theta})$ appearing in the descriptions of $\cH(G(F), (K_{x_0}, \rho_{x_0}))$ and $\cH\left(
G_{\theta}(F), (K_{\theta, x_{0}}, u_{x_{0}})
\right)$ are isomorphic.
To prove Theorem~\ref{mainthm:heckealgebraisom}, we use the reduction argument from Section~\ref{section:comparisonofqparameter} and the comparison of $q$-parameters from Section~\ref{section:finitegroupcase}.
In $\S$\ref{subsec:heckealgebraforkimyutypes}, we combine Theorem~\ref{mainthm:heckealgebraisom} with \cite[Theorem~4.4.1]{2024arXiv240807805A} to prove that the affine Hecke algebra appearing in the description of the Hecke algebra for an arbitrary type constructed by Kim and Yu is isomorphic to the affine Hecke algebra appearing in the description of a unipotent Hecke algebra.
In $\S$\ref{subsection:Lusztig'sconjecture}, we prove a version of Lusztig's conjecture about parameters of Hecke algebras assuming that $G$ splits over a tamely ramified extension of $F$ and $p \nmid \abs{W}$.

\subsection*{Acknowledgments}

The author would like to thank Jessica Fintzen and Jeffrey D. Adler for their valuable feedback on an earlier draft of this paper. He is also grateful to Noriyuki Abe and David Schwein for helpful discussions on Section~\ref{subsec:defofGthetaandu}, and to Arnaud Eteve for sharing his results on categorical Jordan decompositions.
He thanks Maarten Solleveld for his helpful comments on the manuscript and for pointing him to many related references.
Finally, the author gratefully acknowledges the hospitality and financial support of the Max Planck Institute for Mathematics.

\subsection*{General notation}

We fix a prime number $p$ and $\ell$ such that $p \neq \ell$.
Let $\Coeff$ be a field that is isomorphic to $\overline{\bQ_{\ell}}$.
We fix an isomorphism $\Coeff \simeq \overline{\bQ_{\ell}}$.
In this paper, all representations are on vector spaces over $\Coeff$.

If $L$ is any field, and $L'$ is any Galois field extension of $L$,
then we
let $\Gal(L'/L)$ denote the corresponding Galois group.

For a representation $(\rho, V_{\rho})$ of a group $H$, we identify $\rho$ with its representation space $V_{\rho}$ by abuse of notation. 
For any vector space $V$, we write $\id_V$
for the identity map on $V$.

Suppose that $K$ is a subgroup of a group $H$ and $h \in H$. We denote $hKh^{-1}$ by $^hK$. If $\rho$ is a representation of $K$, we write $^h\!\rho$ for the representation $x\mapsto \rho(h^{-1}xh)$ of $^h\!K$. 
We define
\[
N_{H}(K) = \left\{
h \in H \mid {^h\!K} = K
\right\}
\quad
\text{ and }
\quad
N_{H}(\rho) = \left\{
h \in N_{H}(K) \mid {^h\!\rho} \simeq \rho
\right\}.
\]

Suppose that $K$ is an open subgroup of a locally profinite group $H$.
For a smooth representation $(\rho, V_{\rho})$ of $K$, we denote by 
\[
\left(
\ind_{K}^{H} (\rho), \ind_{K}^{H} (V_{\rho})
\right)
\]
the compactly induced representation of $(\rho, V_{\rho})$.

Let $\pi$ be a finite-dimensional representation of a group $G$, and suppose that $\pi$ has length at most two.
Then we define a number $q(\pi) \ge 1$ as follows.
If $\pi$ is irreducible, we define $q(\pi) = 1$.
Suppose that the representation $\pi$ has length two, and let $\pi_{1}$ and $\pi_{2}$ be the irreducible constitutes of $\pi$ such that $\dim(\pi_{1}) \ge \dim(\pi_{2})$.
Then we define the number $q(\pi)$ as $q(\pi) = \frac{\dim(\pi_{1})}{\dim(\pi_{2})}$.
We call $q(\pi)$ the \emph{$q$-parameter} attached to $\pi$.
We regard $q(\pi) \in \Coeff$ via the inclusion $\bQ \subseteq \Coeff$.

\section{A comparison of $q$-parameters via group homomorphisms}
\label{section:comparisonofqparameter}

In this section, we will prove a comparison result of $q$-parameters attached to compactly induced representations via certain group homomorphisms, which we will use in the reduction arguments in following sections.

Let $\sfG'$ be a finite group.
Let $\sfG$ be a normal subgroup of $\sfG'$ and $\sfP'$ be a subgroup of $\sfG'$ such that $\sfG' = \sfP' \cdot \sfG$.
We write $\sfP = \sfP' \cap \sfG$.
We suppose that $\sfP'/\sfP \simeq \sfG'/\sfG$ is abelian.
Let $\rho'$ be an irreducible representation of $\sfP'$ and $\rho$ be an irreducible subrepresentation of $\rho' \restriction_{\sfP}$.
Then there exists $p'_{1}, p'_{2}, \ldots, p'_{n} \in \sfP'$ such that the restriction $\rho' \restriction_{\sfP}$ decomposes as $\rho' \restriction_{\sfP} = \bigoplus_{1 \le k \le n} {^{p'_{k}}\rho}$.
We suppose that the representations $\ind_{\sfP'}^{\sfG'} (\rho')$ and $\ind_{\sfP}^{\sfG} (\rho)$ have length at most two.
\begin{proposition}
\label{prop:comparisonofqparameters}
We have 
$
q\left(
\ind_{\sfP'}^{\sfG'} (\rho')
\right)
=
q
\left(
\ind_{\sfP}^{\sfG} (\rho)
\right)
$.
\end{proposition}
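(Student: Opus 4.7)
The plan is to combine Mackey decomposition with Clifford theory for the normal inclusion $\sfG \triangleleft \sfG'$, whose abelian quotient $A := \sfG'/\sfG \simeq \sfP'/\sfP$ plays a central role, and then to compare the dimensions of the constituents of $\pi' := \ind_{\sfP'}^{\sfG'}(\rho')$ and $\pi := \ind_{\sfP}^{\sfG}(\rho)$ by tracking their supports on the $A$-orbits of irreducible $\sfG$-representations. The first step is Mackey: since $\sfG' = \sfP' \cdot \sfG$ and $\sfP = \sfP' \cap \sfG$, the double-coset space $\sfG \backslash \sfG' / \sfP'$ consists of a single class, and each $p'_k \in \sfP'$ satisfies ${^{p'_k}}\sfP = \sfP$, so Mackey decomposition yields
\[
\pi' \restriction_{\sfG}\ \simeq\ \ind_{\sfP}^{\sfG}\bigl(\rho' \restriction_{\sfP}\bigr)\ \simeq\ \bigoplus_{k=1}^n \ind_{\sfP}^{\sfG}\bigl({^{p'_k}\rho}\bigr)\ \simeq\ \bigoplus_{k=1}^{n} {^{p'_k}\pi}.
\]

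The second ingredient is a dimension statement from Clifford theory for $\sfG \triangleleft \sfG'$: any irreducible $\sfG'$-representation $\pi''$ restricts to $\sfG$ as a sum of irreducibles lying in a single $A$-orbit $O_{\pi''}$, each with common multiplicity $e_{\pi''}$. Because $A$ is abelian, I claim that $\dim \pi''$ depends only on $O_{\pi''}$. Indeed, for any $\sigma \in O_{\pi''}$ with stabilizer $T_\sigma \leq A$, the irreducible $\sfG'$-representations lying over $O_{\pi''}$ correspond by the Clifford--Mackey bijection to simple modules of a twisted group algebra $\Coeff^c[T_\sigma]$ for a certain $2$-cocycle $c$; since $T_\sigma$ is abelian, all simple modules of $\Coeff^c[T_\sigma]$ share the common dimension $\sqrt{[T_\sigma : R]}$, where $R$ is the radical of the alternating form $(x, y) \mapsto c(x, y) c(y, x)^{-1}$. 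Now for each $A$-orbit $O$ of irreducible $\sfG$-representations, set $D_O(V) := \sum_{\tau \in O} (\dim \tau) \cdot (\text{multiplicity of } \tau \text{ in } V)$; then the $A$-invariance of orbits and dimensions yields $D_O(\pi' \restriction_{\sfG}) = n \cdot D_O(\pi)$, while on the other hand $D_O(\pi' \restriction_{\sfG}) = \sum_i \dim \pi''_i$, summed over irreducible constituents $\pi''_i$ of $\pi'$ whose associated orbit is $O$.

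When $\pi$ is irreducible the argument is immediate: $\pi' \restriction_\sfG$ is supported on the single $A$-orbit $O_\pi$ of $\pi$, so all constituents of $\pi'$ share a common dimension and $q(\pi') = 1 = q(\pi)$. Otherwise, write $\pi \simeq \pi_1 \oplus \pi_2$ with $d_1 := \dim \pi_1 \geq d_2 := \dim \pi_2$ and $O_j$ the $A$-orbit of $\pi_j$. If $d_1 > d_2$, then $O_1 \neq O_2$ (the dimension is constant along each $A$-orbit), and $D_{O_j}(\pi' \restriction_{\sfG}) = n d_j > 0$ for each $j$ forces $\pi'$ to have a constituent over each of $O_1$ and $O_2$; since $\pi'$ has length at most two, there is exactly one over each, of dimensions $nd_1$ and $nd_2$, so $q(\pi') = d_1/d_2 = q(\pi)$. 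If $d_1 = d_2$, then either $O_1 \neq O_2$ (yielding two constituents of $\pi'$ of equal dimension $nd_1$), or $O_1 = O_2$ (all constituents of $\pi'$ lie over this single orbit and hence share a common dimension by the Clifford--dimension statement above); either way $q(\pi') = 1 = q(\pi)$. The main obstacle in this plan is the Clifford--dimension claim of the second paragraph: it relies essentially on $A$ being abelian, as without this assumption distinct irreducible $\sfG'$-representations over the same $A$-orbit can have genuinely different dimensions.
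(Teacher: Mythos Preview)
Your proof is correct and complete, but it follows a genuinely different route from the paper's.

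The paper proceeds by a direct multiset comparison: it writes out the irreducible constituents of $\rho'_1|_\sfG$ and $\rho'_2|_\sfG$, matches them against the constituents of $\bigoplus_k {}^{p'_k}\pi$, and then splits into cases according to whether $\Hom_\sfG(\rho'_1|_\sfG, \rho'_2|_\sfG)$ vanishes. In the non-vanishing case it invokes a lemma (from Kaletha) that two irreducible $\sfG'$-representations with overlapping restrictions to $\sfG$ differ by a character of $A$, hence have the same dimension; in the vanishing case it argues directly with the disjoint multisets. Each of the four length combinations $(1,1),(1,2),(2,1),(2,2)$ is then treated by hand.

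Your argument is more structural: you isolate the key Clifford-theoretic fact that, because $A$ is abelian, \emph{every} irreducible $\sfG'$-representation lying over a given $A$-orbit $O$ has the same dimension (via the twisted group algebra $\Coeff^c[T_\sigma]$ of the abelian stabilizer), and you package the bookkeeping into the orbit-weighted dimension functional $D_O$. This makes the case analysis almost automatic and explains conceptually why the abelian hypothesis is essential. The paper's character-twist lemma is precisely the special case of your dimension claim for two representations at a time. What your approach buys is a cleaner, shorter argument that generalizes more readily (e.g., to longer decompositions); what the paper's approach buys is that it avoids the structure theory of twisted group algebras over abelian groups, using only the elementary character-twist lemma, so it is more self-contained for a reader unfamiliar with that fact.
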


Proposition~\ref{prop:comparisonofqparameters} will be used in the reduction arguments in Sections~\ref{section:finitegroupcase} and \ref{section:heckealgebraisom}.
In particular, we will apply Proposition~\ref{prop:comparisonofqparameters} to the following cases:
\begin{description}
\item[Reduction to connected center case]
Let $i \colon \sfG \rightarrow \sfG'$ be a regular embedding of connected reductive groups defined over a finite field $\ff$, that is, $i$ induces an isomorphism of the adjoint groups, and the center of the group $\sfG'$ is connected.
We will apply Proposition~\ref{prop:comparisonofqparameters} to $\sfG' = \sfG'(\ff)$ and $\sfG = \sfG(\ff)$ in Section~\ref{subsec:reductiontosimpleconnectedcenter} to reduce the calculation of $q$-parameters to the  connected center case, where the representation theory of finite reductive groups is simpler than the general case.

\item[Reduction to connected parahoric subgroups]
Let $G$ be a connected reductive group defined over a non-archimedean local field $F$ with residue field $\ff$.
In order to construct a type for a single depth-zero Bernstein block of $G$, we use a representation of the stabilizer $G(F)_{x}$ of a point $x$ of the enlarged Bruhat--Tits building  $\cB(G, F)$ that is trivial on the pro-$p$ radical $G(F)_{x, 0+}$ of the parahoric subgroup $G(F)_{x,0}$.
The quotient group $G(F)_{x}/G(F)_{x, 0+}$ can be identified with the group of $\ff$-points of a reductive group $\sfG_{x}$ defined over $\ff$.
However, the group $\sfG_{x}$ is disconnected in general.
We will apply Proposition~\ref{prop:comparisonofqparameters} to $\sfG' = \sfG_{x}(\ff)$ and $\sfG = \sfG_{x}^{\circ}(\ff)$, where $\sfG_{x}^{\circ}$ is the connected component of $\sfG_{x}$, in the proof of Theorem~\ref{mainthm:heckealgebraisom} to reduce the calculations of $q$-parameters attached to representations of $\sfG_{x}(\ff)$ to the similar calculations for the connected reductive group $\sfG^{\circ}_{x}$.

\end{description}

\begin{proof}
\addtocounter{equation}{-1}
\begin{subequations}
First, we consider the case that the lengths of the representations $\ind_{\sfP'}^{\sfG'} (\rho')$ and $\ind_{\sfP}^{\sfG} (\rho)$ are two.
We write $\ind_{\sfP'}^{\sfG'} (\rho') = \rho'_{1} \oplus \rho'_{2}$ and $\ind_{\sfP}^{\sfG} (\rho) = \rho_{1} \oplus \rho_{2}$.
We decompose the restrictions $\rho'_{1} \restriction_{\sfG}$ and $\rho'_{2} \restriction_{\sfG}$ as
\[
\rho'_{1} \restriction_{\sfG} = \bigoplus_{1 \le i \le l} \rho'_{1, i}
\qquad
\text{and}
\qquad
\rho'_{2} \restriction_{\sfG} = \bigoplus_{1 \le j \le m} \rho'_{2, j}.
\]
Then we have
\begin{align*}
\left(
\bigoplus_{1 \le i \le l} \rho'_{1, i}
\right)
\oplus 
\left(
\bigoplus_{1 \le j \le m} \rho'_{2, j}
\right)
&=
\rho'_{1} \restriction_{\sfG} \oplus \rho'_{2} \restriction_{\sfG} 
=
\ind_{\sfP'}^{\sfG'} (\rho') \restriction_{\sfG} \\
&=
\ind_{\sfP}^{\sfG} \left(
\rho' \restriction_{\sfP}
\right) 
=
\bigoplus_{1 \le k \le n} \ind_{\sfP}^{\sfG} ({^{p'_{k}}\rho})
= 
\bigoplus_{1 \le k \le n} \left(
{^{p'_{k}}\rho_{1}} \oplus {^{p'_{k}}\rho_{2}}
\right). 
\end{align*}
Hence, as multisets, we have
\begin{equation}
\label{equalityasmultisets}
\left\{
 \rho'_{1, 1},  \rho'_{1, 2}, \ldots,  \rho'_{1, l},  \rho'_{2, 1},  \rho'_{2, 2}, \ldots,  \rho'_{2, m}
\right\}
=
\left\{
{^{p'_{1}}\rho_{1}}, {^{p'_{1}}\rho_{2}}, {^{p'_{2}}\rho_{1}}, {^{p'_{2}}\rho_{2}}, \ldots, {^{p'_{n}}\rho_{1}}, {^{p'_{n}}\rho_{2}}
\right\}.
\end{equation}
Since $\sfG$ is a normal subgroup of $\sfG'$, we have $\dim( \rho'_{1, i}) = \dim( \rho'_{1})/l$ and $\dim( \rho'_{2, j}) = \dim( \rho'_{2})/m$ for all $1 \le i \le l$ and $1 \le j \le m$.
Hence, by comparing the dimensions of the representations in \eqref{equalityasmultisets}, as multisets, we have
\begin{align}
\label{equationofmultisetsdim}
&\left\{
\underbrace{\frac{\dim( \rho'_{1})}{l}, \frac{\dim( \rho'_{1})}{l}, \cdots, \frac{\dim( \rho'_{1})}{l}}_{\text{$l$ times}}, \underbrace{\frac{\dim( \rho'_{2})}{m}, \frac{\dim( \rho'_{2})}{m}, \cdots, \frac{\dim( \rho'_{2})}{m}}_{\text{$m$ times}}
\right\} \\
= 
&\left\{
\underbrace{
\dim(\rho_{1}), \dim(\rho_{1}), \ldots, \dim(\rho_{1})
}_{\text{$n$ times}}, \underbrace{
\dim(\rho_{2}), \dim(\rho_{2}), \ldots, \dim(\rho_{2})
}_{\text{$n$ times}}
\right\}. \notag
\end{align}

Suppose that $\Hom_{\sfG}\left(
\rho'_{1} \restriction_{\sfG}, \rho'_{2} \restriction_{\sfG} 
\right) \neq \{0\}$.
Then according to \cite[Lemma~A.4.3]{2019arXiv191203274K}, we have $ \rho'_{2} \simeq  \rho'_{1} \otimes \chi$ for a character $\chi$ of $\sfG'/\sfG$.
Hence, we obtain that $\rho'_{1} \restriction_{\sfG} \simeq \rho'_{2} \restriction_{\sfG}$.
In particular, we have $\dim( \rho'_{1}) = \dim( \rho'_{2})$ and $l = m$, which implies that all elements of the left hand side of \eqref{equationofmultisetsdim} are the same.
Thus, we obtain that $q\left(
\ind_{\sfP'}^{\sfG'} (\rho')
\right)
=
q
\left(
\ind_{\sfP}^{\sfG} (\rho)
\right)
=1$ in this case.

Now, we consider the case that $\Hom_{\sfG}\left(
\rho'_{1} \restriction_{\sfG}, \rho'_{2} \restriction_{\sfG} 
\right) = \{0\}$, which means that the multisets 
$
\left\{
 \rho'_{1, 1},  \rho'_{1, 2}, \ldots,  \rho'_{1, l}
\right\}
$
and
$\left\{
 \rho'_{2, 1},  \rho'_{2, 2}, \ldots,  \rho'_{2, m}
\right\}$ are disjoint.
By replacing the indices if necessary, we assume that $\rho_{1} \in \left\{
 \rho'_{1, 1},  \rho'_{1, 2}, \ldots,  \rho'_{1, l}
\right\}$.
Then since $\left\{
 \rho'_{1, 1},  \rho'_{1, 2}, \ldots,  \rho'_{1, l}
\right\}$ is the multiset of the irreducible subrepresentations of the restriction of the representation $\rho'_{1}$ of $\sfG'$ to $\sfG$, we have ${^{p'_{k}}\rho_{1}} \in \left\{
 \rho'_{1, 1},  \rho'_{1, 2}, \ldots,  \rho'_{1, l}
\right\}$ for all $1 \le k \le n$.
If ${^{p'_{k}}\rho_{2}} \in \left\{
 \rho'_{1, 1},  \rho'_{1, 2}, \ldots,  \rho'_{1, l}
\right\}$ for some $k$, we have ${^{p'_{k}}\rho_{2}} \in \left\{
 \rho'_{1, 1},  \rho'_{1, 2}, \ldots,  \rho'_{1, l}
\right\}$ for all $1 \le k \le n$, contradicting \eqref{equalityasmultisets} and the fact that the multisets 
$
\left\{
 \rho'_{1, 1},  \rho'_{1, 2}, \ldots,  \rho'_{1, l}
\right\}
$
and
$\left\{
 \rho'_{2, 1},  \rho'_{2, 2}, \ldots,  \rho'_{2, m}
\right\}$ are disjoint.
Hence, we obtain that 
\[
\left\{
 \rho'_{1, 1},  \rho'_{1, 2}, \ldots,  \rho'_{1, l}
\right\}
=
\left\{
{^{p'_{1}}\rho_{1}}, {^{p'_{2}}\rho_{1}}, \ldots, {^{p'_{n}}\rho_{1}}
\right\}
\quad
\text{and}
\quad
\left\{
 \rho'_{2, 1},  \rho'_{2, 2}, \ldots,  \rho'_{2, m}
\right\}
=
\left\{
{^{p'_{1}}\rho_{2}}, {^{p'_{2}}\rho_{2}}, \ldots, {^{p'_{n}}\rho_{2}}
\right\}.
\]
Thus, we obtain that $l = m = n$ and $q\left(
\ind_{\sfP'}^{\sfG'} (\rho')
\right)
=
q
\left(
\ind_{\sfP}^{\sfG} (\rho)
\right)$.

If the representations $\ind_{\sfP'}^{\sfG'} (\rho')$ and $\ind_{\sfP}^{\sfG} (\rho)$ are irreducible, we have $q\left(
\ind_{\sfP'}^{\sfG'} (\rho')
\right)
=
q
\left(
\ind_{\sfP}^{\sfG} (\rho)
\right) = 1$ by definition.
Suppose that $\ind_{\sfP'}^{\sfG'} (\rho')$ is irreducible and $\ind_{\sfP}^{\sfG} (\rho)$ has length two.
By using the same argument as above, we have
\begin{align*}
&\quad \left\{
\underbrace{\frac{\dim\left( \ind_{\sfP'}^{\sfG'} (\rho')\right)}{l}, \frac{\dim\left( \ind_{\sfP'}^{\sfG'} (\rho')\right)}{l}, \cdots, \frac{\dim\left( \ind_{\sfP'}^{\sfG'} (\rho')\right)}{l}}_{\text{$l$ times}}\right\} \\
&= 
\left\{
\underbrace{
\dim(\rho_{1}), \dim(\rho_{1}), \ldots, \dim(\rho_{1})
}_{\text{$n$ times}}, \underbrace{
\dim(\rho_{2}), \dim(\rho_{2}), \ldots, \dim(\rho_{2})
}_{\text{$n$ times}}
\right\},
\end{align*}
where $l$ is the length of $ \ind_{\sfP'}^{\sfG'} (\rho') \restriction_{\sfG}$.
Hence, we have $\dim(\rho_{1}) = \dim(\rho_{2}) = \dim\left( \ind_{\sfP'}^{\sfG'} (\rho')\right)/l$.
Thus, we obtain that $q\left(
\ind_{\sfP'}^{\sfG'} (\rho')
\right)
=
q
\left(
\ind_{\sfP}^{\sfG} (\rho)
\right) = 1$.

Finally, suppose that $\ind_{\sfP'}^{\sfG'} (\rho')$ has length two and $\ind_{\sfP}^{\sfG} (\rho)$ is irreducible.
In this case, we have
\[
\left\{
 \rho'_{1, 1},  \rho'_{1, 2}, \ldots,  \rho'_{1, l},  \rho'_{2, 1},  \rho'_{2, 2}, \ldots,  \rho'_{2, m}
\right\}
=
\left\{
{^{p'_{1}}\!\ind_{\sfP}^{\sfG} (\rho)}, {^{p'_{2}}\!\ind_{\sfP}^{\sfG} (\rho)}, \ldots, {^{p'_{n}}\!\ind_{\sfP}^{\sfG} (\rho)}
\right\}.
\]
Suppose that $\Hom_{\sfG}\left(
\rho'_{1} \restriction_{\sfG}, \rho'_{2} \restriction_{\sfG} 
\right) = \{0\}$.
Then by the same argument as above, we obtain that 
\[
\left\{
 \rho'_{1, 1},  \rho'_{1, 2}, \ldots,  \rho'_{1, l}
\right\}
=
\left\{
{^{p'_{1}}\!\ind_{\sfP}^{\sfG} (\rho)}, {^{p'_{2}}\!\ind_{\sfP}^{\sfG} (\rho)}, \ldots, {^{p'_{n}}\!\ind_{\sfP}^{\sfG} (\rho)}
\right\}
\]
or
\[
\left\{
 \rho'_{2, 1},  \rho'_{2, 2}, \ldots,  \rho'_{2, m}
\right\}
=
\left\{
{^{p'_{1}}\!\ind_{\sfP}^{\sfG} (\rho)}, {^{p'_{2}}\!\ind_{\sfP}^{\sfG} (\rho)}, \ldots, {^{p'_{n}}\!\ind_{\sfP}^{\sfG} (\rho)}
\right\},
\]
a contradiction.
Hence, we obtain that $\Hom_{\sfG}\left(
\rho'_{1} \restriction_{\sfG}, \rho'_{2} \restriction_{\sfG} 
\right) \neq \{0\}$.
In this case, we have $ \rho'_{2} \simeq  \rho'_{1} \otimes \chi$ for a character $\chi$ of $\sfG'/\sfG$ by \cite[Lemma~A.4.3]{2019arXiv191203274K}.
Thus, we obtain that $\dim( \rho'_{1}) = \dim( \rho'_{2})$ and $q\left(
\ind_{\sfP'}^{\sfG'} (\rho')
\right)
=
1
=
q
\left(
\ind_{\sfP}^{\sfG} (\rho)
\right)$.
\end{subequations}
\end{proof}

For later use, we will state a more general form of Proposition~\ref{prop:comparisonofqparameters}.

\begin{corollary}
\label{corollaryofcomparisonofqparameters}
Let $f \colon \sfG \rightarrow \sfG'$ be a group homomorphism of finite groups and $\sfP'$ be a subgroup of $\sfG'$.
We write $\sfP = f^{-1}(\sfP')$.
Suppose that $f(\sfG)$ is a normal subgroup of $\sfG'$ and the quotient group $\sfG'/f(\sfG)$ is abelian.
We also suppose that $\sfG' = \sfP' \cdot f(\sfG)$.
Let $\rho'$ be an irreducible representation of $\sfP'$ and $\rho$ be an irreducible representation of $\sfP$ such that $\rho$ is contained in $\rho' \circ f \restriction_{\sfP}$.
We also suppose that the representations $\ind_{\sfP'}^{\sfG'} (\rho')$ and $\ind_{\sfP}^{\sfG} (\rho)$ have length at most two.
Then we have $
q\left(
\ind_{\sfP'}^{\sfG'} (\rho')
\right)
=
q
\left(
\ind_{\sfP}^{\sfG} (\rho)
\right)
$.
\end{corollary}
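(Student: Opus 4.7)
The plan is to factor the homomorphism $f \colon \sfG \to \sfG'$ as the composition of the surjection $\sfG \twoheadrightarrow f(\sfG)$ with the inclusion $f(\sfG) \hookrightarrow \sfG'$, and then handle these two stages separately: the inclusion stage is a direct application of Proposition~\ref{prop:comparisonofqparameters}, while the surjection stage reduces to the fact that induction commutes with pullback along a surjection whose kernel is contained in the inducing subgroup.

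First I would establish the set-theoretic identity $\sfP' \cap f(\sfG) = f(\sfP)$: the inclusion $f(\sfP) \subseteq \sfP' \cap f(\sfG)$ is immediate, and conversely if $f(g) \in \sfP'$ then $g \in f^{-1}(\sfP') = \sfP$. Writing $\sfN = \ker(f)$, the containment $\sfN \subseteq \sfP$ is automatic, so the canonical map $\sfG/\sfP \to f(\sfG)/f(\sfP)$ is a bijection of left coset spaces. Next I would descend $\rho$ along $f$: since $\rho$ sits inside the pullback $\rho' \circ f \restriction_{\sfP}$, the normal subgroup $\sfN \subseteq \sfP$ acts trivially on $\rho$, so there is a unique irreducible representation $\rho''$ of $f(\sfP)$ with $\rho = \rho'' \circ f \restriction_{\sfP}$, and $\rho''$ is an irreducible subrepresentation of $\rho' \restriction_{f(\sfP)} = \rho' \restriction_{\sfP' \cap f(\sfG)}$. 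The key observation is then that
\[
\ind_{\sfP}^{\sfG}(\rho'' \circ f \restriction_{\sfP}) \;\simeq\; \ind_{f(\sfP)}^{f(\sfG)}(\rho'') \circ f
\]
as representations of $\sfG$; this follows directly from Mackey's explicit realization of induced representations together with the coset bijection $\sfG/\sfP \cong f(\sfG)/f(\sfP)$. Since pullback along the surjection $f \colon \sfG \twoheadrightarrow f(\sfG)$ is an equivalence between $\Rep(f(\sfG))$ and the full subcategory of $\sfN$-trivial representations of $\sfG$ that preserves dimensions, the subrepresentation lattices and in particular the $q$-parameters of $\ind_{\sfP}^{\sfG}(\rho)$ and $\ind_{f(\sfP)}^{f(\sfG)}(\rho'')$ coincide. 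As a byproduct, $\ind_{f(\sfP)}^{f(\sfG)}(\rho'')$ has length at most two.

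Finally I would apply Proposition~\ref{prop:comparisonofqparameters} to the inclusion $f(\sfG) \trianglelefteq \sfG'$ with the subgroup $\sfP' \subseteq \sfG'$ and the representations $\rho'$, $\rho''$. All hypotheses are in place: $\sfG' = \sfP' \cdot f(\sfG)$ and the abelianness of $\sfG'/f(\sfG)$ are assumed, and both induced representations have length at most two by the preceding paragraph. This gives $q(\ind_{\sfP'}^{\sfG'}(\rho')) = q(\ind_{\sfP' \cap f(\sfG)}^{f(\sfG)}(\rho''))$, which combined with the reduction step yields the claim.

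The only mildly delicate point is the induction--pullback compatibility, but because $\sfN \subseteq \sfP$ no Mackey double-coset pathology intervenes and the identification is essentially tautological; I do not expect any genuine obstacle beyond bookkeeping.
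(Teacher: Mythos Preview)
Your proposal is correct and follows essentially the same approach as the paper: factor $f$ through the quotient by $\ker(f)$, observe that $\rho$ descends because it sits inside a pullback, and then apply Proposition~\ref{prop:comparisonofqparameters} to the inclusion $f(\sfG) \trianglelefteq \sfG'$. The paper's proof is simply a terser version of yours, writing $\sfP/\ker(f)$ and $\sfG/\ker(f)$ in place of your $f(\sfP)$ and $f(\sfG)$ and leaving the induction--pullback compatibility and the identity $\sfP' \cap f(\sfG) = f(\sfP)$ implicit.
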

\begin{proof}
Since $\rho$ is contained in $\rho' \circ f \restriction_{\sfP}$, it is trivial on the kernel $\ker(f)$ of $f$.
We regard $\rho$ as a representation of $\sfP/\ker(f)$.
Then Proposition~\ref{prop:comparisonofqparameters} implies that we have
\[
q\left(
\ind_{\sfP'}^{\sfG'} (\rho')
\right)
=
q
\left(
\ind_{\sfP/\ker(f)}^{\sfG/\ker(f)} (\rho)
\right)
=
q
\left(
\ind_{\sfP}^{\sfG} (\rho)
\right).
\qedhere
\]
\end{proof}

\numberwithin{equation}{subsection}
\section{Calculation of $q$-parameters over finite fields}

\label{section:finitegroupcase}

In this section, we will prove a reduction result of the calculation of $q$-parameters.
We will prove that the $q$-parameter attached to the parabolic induction of an irreducible, cuspidal representation of a maximal Levi subgroup of a finite reductive group agrees with the $q$-parameter attached to the parabolic induction of a certain unipotent, cuspidal representation (see Proposition~\ref{prop:reductiontounipotentqparameters} and Corollary~\ref{corollary:reductiontounipdualver}).

\subsection{The set-up}

Let $\ff$ be a finite field of characteristic $p$.
We fix an algebraic closure $\overline{\ff}$ of $\ff$.
When we refer to an algebraic field extension $\ff'/\ff$, we always assume that $\ff' \subseteq \overline{\ff}$.
We regard the Deligne--Lusztig representations as representations over $\Coeff$ via the fixed isomorphism $\Coeff \simeq \overline{\bQ_{\ell}}$.

For a connected reductive group $\sfG$ defined over $\ff$, let $Z(\sfG)$ denote the center of $\sfG$ and write $\sfG_{\ad} = \sfG/Z(\sfG)$.
We also denote by $\sfG^{*}$ the dual group of $\sfG$ defined in \cite[Definition~5.21]{MR393266}.
For a maximal torus $\sfT$ of $\sfG$ and a character $\theta$ of $\sfT(\ff)$, let $R^{\sfG}_{\sfT}(\theta)$ denote the virtual representation of $\sfG(\ff)$ arising from the Deligne--Lusztig construction applied to $(\sfT, \theta)$.
For a semisimple element $s \in \sfG^{*}(\ff)$, let $\mathscr{E}(\sfG, s)$ denote the Lusztig series corresponding to $s$.
We write $\mathscr{E}(\sfG, 1) = \Uch(\sfG)$ and call an element of $\Uch(\sfG)$ a \emph{unipotent representation} of $\sfG(\ff)$.

Let $\sfG$ be a connected reductive group defined over $\ff$.
Let $\sfM$ be either $\sfG$ or a maximal Levi subgroup of $\sfG$.
For a maximal torus $\sfT$ of $\sfM$, we denote by $\Phi(\sfG, \sfT)$ (resp.\ $\Phi(\sfM, \sfT)$) the absolute root system of $\sfG$ (resp.\ $\sfM$) with respect to $\sfT$.
For $\alpha \in \Phi(\sfG, \sfT)$, we write $\alpha^{\vee}$ for the corresponding coroot.
We say that a character $\theta$ of $\sfT(\ff)$ is orthogonal to a coroot $\alpha^{\vee}$ if the composition $\theta \circ N_{\ff'/\ff} \circ \alpha^{\vee} \colon \ff'^{\times} \rightarrow \Coeff^{\times}$ is trivial, where $\ff'$ is a finite extension of $\ff$ such that $\sfT$ splits over $\ff'$, and $N_{\ff'/\ff}$ denotes the norm map $\sfT(\ff') \rightarrow \sfT(\ff)$.
We note that the definition of the character $\theta$ being orthogonal to $\alpha^{\vee}$ here is different from the original definition in \cite[\S5.9]{MR393266}.
We use the equivalent definition given in \cite[Lemma~3.4.14]{2019arXiv191203274K}.


Let $\rho_{\sfM}$ be an irreducible, cuspidal representation of $\sfM(\ff)$.
For a parabolic subgroup $\sfP$ of $\sfG$ with Levi factor $\sfM$, we consider the parabolically induced representation $\ind_{\sfP(\ff)}^{\sfG(\ff)} (\rho_{\sfM})$ of $\rho_{\sfM}$. (Here, we identify $\rho_{\sfM}$ with the representation of $\sfP(\ff)$ that is trivial on its unipotent radical.)
According to \cite[\S4 Corollary]{MR457579}, the isomorphism class of $\ind_{\sfP(\ff)}^{\sfG(\ff)} (\rho_{\sfM})$ only depends on $\sfG$, $\sfM$, and $\rho_{\sfM}$, but does not depend on $\sfP$.
Hence, we write this representation as $R_{\sfM}^{\sfG}(\rho_{\sfM})$.
Since $\sfM$ is either $\sfG$ or a maximal Levi subgroup of $\sfG$ and the representation $\rho_{\sfM}$ is cuspidal, we obtain from the Mackey formula that the length of the representation $R_{\sfM}^{\sfG} (\rho_{\sfM})$ is at most two.
In this section, we will study the $q$-parameter $q\left(
R_{\sfM}^{\sfG} (\rho_{\sfM})
\right)$ attached to $R_{\sfM}^{\sfG} (\rho_{\sfM})$.
We note that according to \cite[Theorem~3.18 (ii)]{MR570873} and \cite[Proposition~3.9.1]{2024arXiv240807801A}, the parameter $q\left(
R_{\sfM}^{\sfG} (\rho_{\sfM})
\right)$ appears as the parameters of Hecke algebras for finite and $p$-adic groups (see Proposition~\ref{calculationofqs} below).
We will prove in Proposition~\ref{prop:reductiontounipotentqparameters} and Corollary~\ref{corollary:reductiontounipdualver} that the calculation of the $q$-parameter $q\left(
R_{\sfM}^{\sfG} (\rho_{\sfM})
\right)$ can be reduced to the similar calculation for a unipotent representation.

\subsection{Main result of this section}
\label{subsec:statementofmainresult}

In this subsection, we will state the main result of this section.

Let $s$ be a semisimple element of $\sfM^{*}(\ff)$ such that $\rho_{\sfM} \in \mathscr{E}(\sfM, s)$.
Let $\sfG_{s}^{*}$ be the connected centralizer of $s$ in $\sfG^{*}$ and $\sfG_{s}$ be the dual of $\sfG_{s}^{*}$.
Suppose that $\sfT$ is a maximal torus of $\sfG$ and $\theta$ be a character of $\sfT(\ff)$ such that  the $\sfG(\ff)$-conjugacy class of the pair $(\sfT, \theta)$ corresponds to the the $\sfG^{*}(\ff)$-conjugacy class of the pair $(\sfT^{*}, s)$ as in \cite[(5.21.5)]{MR393266}.
Then the group $\sfG_{s}$ is a connected reductive group over $\ff$ with maximal torus $\sfT$ and root system
\[
\Phi(\sfG_{s}, \sfT) = \left\{
\alpha \in \Phi(\sfG, \sfT) \mid \text{$\theta$ is orthogonal to $\alpha^{\vee}$}
\right\}.
\]
For such a character $\theta$, we also write $\sfG_{s} = \sfG_{\theta}$ and $\sfG^{*}_{s} = \sfG^{*}_{\theta}$.

Replacing $\sfG$ with $\sfM$ in the definitions of $\sfG_{s}$ and $\sfG^{*}_{s}$, we define a Levi subgroup $\sfM_{s}$ of $\sfG_{s}$ and a Levi subgroup $\sfM^{*}_{s}$ of $\sfG^{*}_{s}$.

According to \cite[Proposition~5.1]{MR1021495}, there exists a surjective map 
\[
J^{\sfM}_{s} \colon \mathscr{E}(\sfM, s) \twoheadrightarrow \Uch(\sfM^{*}_{s}) / \sim
\]
called a \emph{Jordan decomposition}.
Here, the equivalence relations $\sim$ is defined by the conjugation of the centralizer $Z_{\sfM^{*}}(s)(\ff)$ of $s$ in $\sfM^{*}(\ff)$ on $\Uch(\sfM^{*}_{s})$.
Let $u^{*}_{\sfM}$ be a unipotent representation of $\sfM^{*}_{s}(\ff)$ contained in $J^{\sfM}_{s} \left(
\rho_{\sfM}
\right)$.
According to \cite[Theorem~3.2.22]{MR4211779}, the representation $u^{*}_{\sfM}$ is cuspidal.

We describe the construction of $u^{*}_{\sfM}$ more explicitly.
We fix a regular embedding $i \colon \sfG \rightarrow \sfG'$, that is, $i$ is an embedding of connected reductive groups defined over $\ff$ that induces an isomorphism of the adjoint groups, and the center of the group $\sfG'$ is connected.
The embedding $i$ also induces a regular embedding $i\restriction_{\sfM} \colon \sfM \rightarrow \sfM'$ for a Levi subgroup $\sfM'$ of $\sfG'$.
We regard $\sfG$ (resp.\ $\sfM$) as a subgroup of $\sfG'$ (resp.\ $\sfM'$).
We fix a semisimple element $s' \in (\sfM')^{*}(\ff)$ that maps to $s$ under the dual homomorphism $i^{*} \colon (\sfG')^{*} \to \sfG^{*}$ of $i$.
We define $(\sfM')^{*}_{s'}$ by replacing $(\sfM, s)$ with $(\sfM', s')$ in the definition of $\sfM^{*}_{s}$.
Then $i^{*}$ restricts to the surjective homomorphism $i^{*} \restriction_{(\sfM')^{*}_{s'}} \colon (\sfM')^{*}_{s'}\rightarrow \sfM_{s}^{*}$.
According to \cite[Proposition~2.3.15]{MR4211779}, we have a bijection
\begin{equation}
\label{bijectionofunipchar}
\Uch\left(
\sfM_{s}^{*}
\right) \isoarrow \Uch\left(
(\sfM')^{*}_{s'}
\right), \, u^{*} \mapsto u^{*} \circ i^{*} \restriction_{(\sfM')^{*}_{s'}(\ff)}.
\end{equation}
Let $\rho_{\sfM'}$ be an irreducible representation in $\mathscr{E}(\sfM', s')$ such that $\rho_{\sfM'} \restriction_{\sfM(\ff)}$ contains $\rho_{\sfM}$ and
let $u^{*}_{\sfM'}$ be the unipotent representation of $(\sfM')^{*}_{s'}(\ff)$ corresponding to $\rho_{\sfM'}$ via the bijection
\[
J_{s'}^{\sfM'} \colon \mathscr{E}(\sfM', s') \isoarrow \Uch((\sfM')^{*}_{s'})
\]
given in \cite[Theorem~7.1]{MR1051245}, which is a refinement of \cite[4.23 Main Theorem]{MR742472}.
Now we define $u^{*}_{\sfM}$ to be the unipotent representation of $\sfM_{s}^{*}(\ff)$ that corresponds to $u^{*}_{\sfM'}$ via \eqref{bijectionofunipchar}.

\begin{remark}
When the center of $\sfG$ is connected, the correspondence $\rho_{\sfM} \mapsto u^{*}_{\sfM}$ is uniquely determined by the conditions in \cite[Theorem~7.1]{MR1051245}.
However, if the center of $\sfG$ is disconnected, the correspondence $\rho_{\sfM} \mapsto u^{*}_{\sfM}$ above is not canonical in general.
We have to choose a Jordan decomposition $J^{\sfM}_{s}$, which \emph{a priori} depends on the choice of regular embedding $i$.
We also have to choose an irreducible representation $\rho_{\sfM'}$ of $\sfM'(\ff)$ whose restriction to $\sfM(\ff)$ contains $\rho_{\sfM}$, which corresponds to choosing a representation in the equivalence class $J^{\sfM}_{s}\left(
\rho_{\sfM}
\right)$ (see \cite[Section~9]{MR1021495}).
Recently, based on \cite{MR3416310, MR3701897}, Lusztig and Yun \cite{MR4108915} constructed a categorical version of Jordan decompositions using the theory of categorical centers of the Hecke categories.
Furthermore, Eteve \cite{2025arXiv250104113E} also gave a construction of a categorical version of Jordan decompositions using the categorical trace formalism developed in \cite{2024arXiv241213323E}.
In both constructions, the representation $\rho_{\sfM}$ is associated with a unipotent representation of an endoscopic group $H$, which is in general disconnected and has 
$\sfG_{s}$ as its identity component.
Although the categorical versions of Jordan decompositions might be more conceptual, we use the original Jordan decompositions \cite[Proposition~5.1]{MR1021495} in this paper since certain properties have so far been established only for them.
\end{remark}

The main result of this section is the following.

\begin{proposition}
\label{prop:reductiontounipotentqparameters}
We have 
\[
q\left(
R_{\sfM}^{\sfG} (\rho_{\sfM})
\right) = q\left(
R_{\sfM^{*}_{s}}^{\sfG^{*}_{s}} (u^{*}_{\sfM})
\right).
\]
\end{proposition}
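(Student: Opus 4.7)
My strategy has two parts. First, I reduce to the case where $Z(\sfG)$ is connected by using Corollary~\ref{corollaryofcomparisonofqparameters} to relate the $q$-parameters on both sides of the desired equality with their analogues for $\sfG'$ and $(\sfG')^{*}_{s'}$. Second, in the connected-center case I invoke Lusztig's compatibility of Jordan decomposition with Harish-Chandra induction together with the standard dimension formula for Jordan decomposition, which together force the decomposition patterns on the two sides to match up.

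For the reduction, on the group side I pick a rational parabolic $\sfP'$ of $\sfG'$ with Levi $\sfM'$ and set $\sfP = \sfP' \cap \sfG$, which is a rational parabolic of $\sfG$ with Levi $\sfM$. Because $\sfG'/\sfG$ is a central torus, $\sfG'(\ff)/\sfG(\ff)$ is abelian, and by the Lang--Steinberg theorem $\sfG'(\ff) = \sfP'(\ff) \cdot \sfG(\ff)$. Applying Corollary~\ref{corollaryofcomparisonofqparameters} to the inclusion $\sfG(\ff) \hookrightarrow \sfG'(\ff)$ with $\rho' = \rho_{\sfM'}$ and $\rho = \rho_{\sfM}$ yields $q(R_{\sfM'}^{\sfG'}(\rho_{\sfM'})) = q(R_{\sfM}^{\sfG}(\rho_{\sfM}))$. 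On the dual side, $i^{*}$ restricts to a homomorphism $(\sfG')^{*}_{s'} \to \sfG^{*}_{s}$ that is surjective as algebraic groups with connected central kernel, hence surjective on $\ff$-points by Lang's theorem; call this surjection $f$. For a rational parabolic $\sfQ$ of $\sfG^{*}_{s}$ with Levi $\sfM^{*}_{s}$, the algebraic preimage $f^{-1}(\sfQ)$ is a rational parabolic of $(\sfG')^{*}_{s'}$ with Levi $(\sfM')^{*}_{s'}$, and by \eqref{bijectionofunipchar} we have $u^{*}_{\sfM'} = u^{*}_{\sfM} \circ f\restriction_{(\sfM')^{*}_{s'}(\ff)}$. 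A second application of Corollary~\ref{corollaryofcomparisonofqparameters}, now with the surjection $f$, gives $q(R_{(\sfM')^{*}_{s'}}^{(\sfG')^{*}_{s'}}(u^{*}_{\sfM'})) = q(R_{\sfM^{*}_{s}}^{\sfG^{*}_{s}}(u^{*}_{\sfM}))$. It therefore suffices to prove the proposition after replacing $(\sfG, \sfM, \rho_{\sfM}, u^{*}_{\sfM})$ by $(\sfG', \sfM', \rho_{\sfM'}, u^{*}_{\sfM'})$; equivalently, I may assume that $Z(\sfG)$ is connected, in which case $\sfG^{*}_{s} = Z_{\sfG^{*}}(s)$ and $J^{\sfG}_{s}$ is a genuine bijection.

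Now assume $Z(\sfG)$ is connected. Since Harish-Chandra induction preserves Lusztig series, every irreducible constituent of $R_{\sfM}^{\sfG}(\rho_{\sfM})$ lies in $\mathscr{E}(\sfG, s)$ and every irreducible constituent of $R_{\sfM^{*}_{s}}^{\sfG^{*}_{s}}(u^{*}_{\sfM})$ is unipotent. The key input is Lusztig's compatibility of Jordan decomposition with Harish-Chandra induction: the irreducible constituents of the two induced representations are in bijection under $J^{\sfG}_{s}$. In particular, both induced representations have the same length, settling the irreducible case at once. When both have length two, the dimension formula $\dim(\rho) = [|\sfG(\ff)|_{p'} / |\sfG^{*}_{s}(\ff)|_{p'}] \cdot \dim(J^{\sfG}_{s}(\rho))$ for any $\rho \in \mathscr{E}(\sfG, s)$ shows that dimensions on the two sides scale by the same multiplicative constant, so the ratio of dimensions of the two constituents is preserved. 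This yields the desired equality $q(R_{\sfM}^{\sfG}(\rho_{\sfM})) = q(R_{\sfM^{*}_{s}}^{\sfG^{*}_{s}}(u^{*}_{\sfM}))$.

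The main technical obstacle is the compatibility of Jordan decomposition with Harish-Chandra induction in the sharp form needed, i.e., as an actual bijection of irreducible constituents, not merely an identification of virtual characters up to signs. In the connected-center case this can be extracted from Lusztig's classification and the description of the endomorphism algebra $\End_{\sfG(\ff)}(R_{\sfM}^{\sfG}(\rho_{\sfM}))$ as a group algebra of the relative Weyl group of $\rho_{\sfM}$; the delicate point is to match this isomorphism of endomorphism algebras on the two sides with the particular Jordan decomposition $J^{\sfG}_{s}$ used to define $u^{*}_{\sfM}$. Because we have reduced to $\sfM$ equal to $\sfG$ or a maximal Levi, the relative Weyl group in question is either trivial or of order two, which keeps this matching tractable.
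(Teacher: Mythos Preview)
Your overall strategy matches the paper's exactly: reduce via Corollary~\ref{corollaryofcomparisonofqparameters} to a nicer group, then invoke compatibility of Jordan decomposition with parabolic induction together with the dimension formula \cite[Corollary~2.6.6]{MR4211779}. The reduction step you wrote out on both the group side and the dual side is essentially what the paper does.

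The gap is that you stop reducing too early. You only reduce to the case where $Z(\sfG)$ is connected, whereas the paper further reduces to the case where $Z(\sfG)$ is connected \emph{and} $\sfG_{\ad}$ is absolutely simple. This matters because the sharp compatibility statement you need---that $J^{\sfG}_{s}$ restricts to a bijection between the irreducible constituents of $R_{\sfM}^{\sfG}(\rho_{\sfM})$ and those of $R_{\sfM^{*}_{s}}^{\sfG^{*}_{s}}(u^{*}_{\sfM})$---is only established in the literature (specifically \cite[Corollary~4.7.6]{MR4211779}) under both hypotheses; connected center alone does not suffice. Your final paragraph effectively concedes this by calling the compatibility ``the main technical obstacle'' and saying it ``can be extracted'' from Lusztig's classification, but this is a sketch, not a proof, and the matching of endomorphism algebras with the \emph{specific} bijection $J^{\sfG}_{s}$ is exactly the delicate point that \cite[Corollary~4.7.6]{MR4211779} addresses.

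The paper's fix is to perform two further reductions after reaching the connected-center case: first to $\ff$-simple factors via a product decomposition $\sfG_{1}\times\cdots\times\sfG_{r}\twoheadrightarrow\sfG$, then from $\ff$-simple to absolutely simple via restriction of scalars. Each reduction requires checking that the Jordan decompositions on both the group side and the dual side are compatible with the passage (invoking \cite[Theorem~7.1~(vi),(vii)]{MR1051245} and \cite[Corollary~8.8]{MR3935811} respectively), and then applying Corollary~\ref{corollaryofcomparisonofqparameters} again on each side. Once $\sfG_{\ad}$ is absolutely simple, \cite[Corollary~4.7.6]{MR4211779} applies directly and your concluding argument goes through.
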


We note that a similar statement is proved in \cite[Section~6]{2024arXiv241119846S} in the case where the representation $\rho_{\sfM}$ is non-singular.
We will prove Proposition~\ref{prop:reductiontounipotentqparameters} in the following two subsections.

\subsection{Reduction to simple with connected center case}
\label{subsec:reductiontosimpleconnectedcenter}

In this subsection, we will reduce Proposition~\ref{prop:reductiontounipotentqparameters} to the case that $Z(\sfG)$ is connected and $\sfG_{\ad}$ is (absolutely) simple.
\begin{remark}
We need this reduction argument to apply \cite[Corollary~4.7.6]{MR4211779}, which states that the Jordan decompositions commute with the parabolic inductions under the assumption that $Z(\sfG)$ is connected and $\sfG_{\ad}$ is simple.
In \cite{2025arXiv250104113E}, Eteve shows that his categorical version of Jordan decompositions commutes with the parabolic inductions (more generally, Deligne--Lusztig inductions), although a minor adjustment in the formalism is required. According to a private communication with Eteve, this issue will be addressed in a forthcoming revised version.
Thus, if we use Eteve's construction, we do not need the reduction argument to the case where $\sfG_{\ad}$ is simple.
On the other hand, we still need the reduction argument to the case where $Z(\sfG)$ is connected.
This is because we also would like to apply \cite[Corollary~2.6.6]{MR4211779}, where we need this assumption.
Indeed, the proof of \cite[Corollary~2.6.6]{MR4211779} uses the Deligne--Lusztig theory for the centralizer $Z_{\sfG^{*}}(s)$ of $s$ in $\sfG^{*}$, which can be disconnected when $Z(\sfG)$ is disconnected.
It might be possible to prove Proposition~\ref{prop:reductiontounipotentqparameters} directly by developing the Deligne--Lusztig theory for disconnected algebraic groups.
\end{remark}

Let $i \colon \sfG \rightarrow \sfG'$ be the regular embedding and $\rho_{\sfM'}$ be the irreducible representation of $\sfM'(\ff)$ used to define $u^{*}_{\sfM}$.
We fix a parabolic subgroup $\sfP$ of $\sfG$ with Levi factor $\sfM$ and let $\sfP'$ be the corresponding parabolic subgroup of $\sfG'$.
Then applying Proposition~\ref{prop:comparisonofqparameters} to 
$\sfG' = \sfG'(\ff)$, $\sfG = \sfG(\ff)$, $\sfP' = \sfP'(\ff)$, $\rho'= \rho_{\sfM'}$, and $\rho = \rho_{\sfM}$, we have 
\[
q\left(
R_{\sfM}^{\sfG}(\rho_{\sfM})
\right)
=
q\left(
\ind_{\sfP(\ff)}^{\sfG(\ff)} (\rho_{\sfM})
\right)
=
q\left(
\ind_{\sfP'(\ff)}^{\sfG'(\ff)} (\rho_{\sfM'})
\right)
= q\left(
R_{\sfM'}^{\sfG'} (\rho_{\sfM'})
\right).
\]
We will also reduce the calculation of the right hand side of Proposition~\ref{prop:reductiontounipotentqparameters} to the similar calculation for $(\sfG')_{s'}^{*}$.
We fix a parabolic subgroup $\sfP^{*}_{s}$ of $\sfG^{*}_{s}$ with Levi factor $\sfM^{*}_{s}$ and let $(\sfP')^{*}_{s'}$ denote the inverse image of $\sfP^{*}_{s}$ via $i^{*} \colon (\sfG')^{*} \rightarrow (\sfG)^{*}$.
Then by applying Corollary~\ref{corollaryofcomparisonofqparameters} to $f = i^{*} \restriction_{(\sfG')^{*}_{s'}(\ff)} \colon (\sfG')^{*}_{s'}(\ff) \rightarrow \sfG_{s}^{*}(\ff)$, $\sfP' = \sfP^{*}_{s}(\ff)$, $\rho' = u^{*}_{\sfM}$, and $\rho = u^{*}_{\sfM'}$, we have
\[
q\left(
R_{\sfM^{*}_{s}}^{\sfG^{*}_{s}} (u^{*}_{\sfM})
\right)
=
q\left(
\ind_{\sfP^{*}_{s}(\ff)}^{\sfG^{*}_{s}(\ff)} (u^{*}_{\sfM})
\right) = q\left(
\ind_{(\sfP')^{*}_{s'}(\ff)}^{(\sfG')^{*}_{s'}(\ff)} (u^{*}_{\sfM'})
\right)
=
q\left(
R_{(\sfM')^{*}_{s'}}^{(\sfG')^{*}_{s'}} (u^{*}_{\sfM'})
\right).
\]
Thus, Proposition~\ref{prop:reductiontounipotentqparameters} for $\sfG$ is reduced to the same claim for $\sfG'$.

Now, we suppose that $\sfG$ is a connected reductive group with connected center.
Then there exists reductive subgroups $G_{i}$ with connected centers such that the adjoint groups $G_{i, \ad}$ are $\ff$-simple, and the product map
$
p \colon \sfG_{1} \times \sfG_{2} \times \cdots \sfG_{r} \rightarrow \sfG
$
defines a surjective homomorphism with kernel a central torus.
We write $\widetilde{\sfG} = \sfG_{1} \times \sfG_{2} \times \cdots \sfG_{r}$, $\widetilde{\sfP} = p^{-1}(\sfP)$, and $\widetilde{\sfM} = p^{-1}(\sfM)$.
Since $\sfM$ is either $\sfG$ or a maximal Levi subgroup of $\sfG$, replacing the indices if necessary, we may assume that $\widetilde{\sfP} = \sfP_{1} \times \sfG_{2} \times \cdots \sfG_{r}$ and $\widetilde{\sfM} = \sfM_{1} \times \sfG_{2} \times \cdots \sfG_{r}$, where $\sfP_{1}$ is a parabolic subgroup of $\sfG_{1}$ with Levi factor $\sfM_{1}$.
Since the kernel of $p$ is a torus, Lang's theorem implies that the map $p$ restricts to a surjection $p \restriction_{\widetilde{\sfM}(\ff)} \colon \widetilde{\sfM}(\ff) \twoheadrightarrow \sfM(\ff)$.
Hence, the representation $\rho_{\sfM} \circ p \restriction_{\widetilde{\sfM}(\ff)}$ of $\widetilde{\sfM}(\ff)$ is irreducible.
We write $\rho_{\sfM} \circ p \restriction_{\widetilde{\sfM}(\ff)}$ as $\rho_{\sfM_{1}} \boxtimes \rho_{\sfG_{2}} \boxtimes \cdots \rho_{\sfG_{r}}$, where $\rho_{\sfM_{1}}$ is an irreducible, cuspidal representation of $\sfM_{1}(\ff)$, and $\rho_{\sfG_{i}}$ is an irreducible, cuspidal representation of $\sfG_{i}(\ff)$ for each $2 \le i \le r$.
Applying Corollary~\ref{corollaryofcomparisonofqparameters} to $f = p \restriction_{\widetilde{\sfG}(\ff)} \colon \widetilde{\sfG}(\ff) \rightarrow \sfG(\ff)$, $\sfP = \sfP(\ff)$, $\rho' = \rho_{\sfM}$, and $\rho = \rho_{\sfM} \circ p \restriction_{\widetilde{\sfM}(\ff)}$, we have
\begin{align}
\label{reductiontoFsimple}
q\left(
R_{\sfM}^{\sfG}(\rho_{\sfM})
\right)
&=
q\left(
\ind_{\sfP(\ff)}^{\sfG(\ff)} (\rho_{\sfM})
\right)
=
q\left(
\ind_{\widetilde{\sfP}(\ff)}^{\widetilde{\sfG}(\ff)} \left(\rho_{\sfM} \circ p \restriction_{\widetilde{\sfM}(\ff)}\right)
\right) \notag \\
&=
q\left(
\ind_{\sfP_{1}(\ff)}^{\sfG_{1}(\ff)} (\rho_{\sfM_{1}})
\right)
=
q\left(
R_{\sfM_{1}}^{\sfG_{1}}(\rho_{\sfM_{1}})
\right).
\end{align}
We will prove a similar equation in the dual side.
Let $p^{*} \colon \sfG^{*} \rightarrow \sfG^{*}_{1} \times \sfG^{*}_{2} \times \cdots \times \sfG^{*}_{r}$ denote the dual of $p$.
We write $p^{*}(s) = \widetilde{s}  = (s_{1}, s_{2}, \ldots, s_{r})$. 
According to \cite[Theorem~7.1 (vi), (vii)]{MR4211779}, we have 
\[
u^{*}_{\sfM} = \left(
u^{*}_{\sfM_{1}} \boxtimes u^{*}_{\sfG_{2}} \boxtimes \cdots \boxtimes u^{*}_{\sfG_{r}}
\right) \circ p^{*} \restriction_{\sfM^{*}_{s}(\ff)},
\]
 where the representation $u^{*}_{\sfM_{1}}$ denotes the unipotent representation of $(\sfM_{1})_{s_{1}}^{*}(\ff)$ corresponding to $\rho_{\sfM_{1}} \in \mathscr{E}(\sfM_{1}, s_{1})$, and the representation $u^{*}_{\sfG_{i}}$ denotes the unipotent representation of $(\sfG_{i})_{s_{i}}^{*}(\ff)$ corresponding to $\rho_{\sfG_{i}} \in \mathscr{E}(\sfG_{i}, s_{i})$ via \cite[Theorem~7.1]{MR1051245}.
 Let $\widetilde{\sfP}_{\widetilde{s}}^{*}$ be the parabolic subgroup of $\widetilde{\sfG}_{\widetilde{s}}^{*}$ such that $(p^{*})^{-1}\left(
 \widetilde{\sfP}_{\widetilde{s}}^{*}
 \right) = \sfP_{s}^{*}$.
 Then we have $\widetilde{\sfP}_{\widetilde{s}}^{*} = (\sfP_{1})_{s_{1}}^{*} \times (\sfG_{2})_{s_{2}}^{*} \times \cdots \times (\sfG_{r})_{s_{r}}^{*}$ for a parabolic subgroup $(\sfP_{1})_{s_{1}}^{*} $ of $(\sfG_{1})_{s_{1}}^{*}$ with Levi factor $(\sfM_{1})_{s_{1}}^{*}$. 
 Applying Corollary~\ref{corollaryofcomparisonofqparameters} to $f = p^{*} \restriction_{\sfG_{s}^{*}(\ff)} \colon \sfG_{s}^{*}(\ff) \rightarrow \widetilde{\sfG}_{\widetilde{s}}^{*}(\ff)$, $\sfP =  \widetilde{\sfP}_{\widetilde{s}}^{*}(\ff)$, $\rho' = u^{*}_{\sfM_{1}} \boxtimes u^{*}_{\sfG_{2}} \boxtimes \cdots \boxtimes u^{*}_{\sfG_{r}}$, and $\rho = u^{*}_{\sfM}$, we have
 \begin{align*}
 q\left(
R_{\sfM^{*}_{s}}^{\sfG^{*}_{s}} (u^{*}_{\sfM})
\right)
&=
 q\left(
\ind_{\sfP^{*}_{s}(\ff)}^{\sfG^{*}_{s}(\ff)} (u^{*}_{\sfM})
\right)
=
q\left(
\ind_{\widetilde{\sfP}^{*}_{\widetilde{s}}(\ff)}^{\widetilde{\sfG}^{*}_{\widetilde{s}}(\ff)} (u^{*}_{\sfM_{1}} \boxtimes u^{*}_{\sfG_{2}} \boxtimes \cdots \boxtimes u^{*}_{\sfG_{r}})
\right) \\
&=
q\left(
\ind_{(\sfP_{1})_{s_{1}}^{*}(\ff)}^{(\sfG_{1})_{s_{1}}^{*}(\ff)} (u^{*}_{\sfM_{1}})
\right)
=
q\left(
R_{(\sfM_{1})_{s_{1}}^{*}}^{(\sfG_{1})_{s_{1}}^{*}} (u^{*}_{\sfM_{1}})
\right).
 \end{align*}
Combining this with \eqref{reductiontoFsimple}, Proposition~\ref{prop:reductiontounipotentqparameters} for $\sfG$ is reduced to the same claim for $\sfG_{1}$.

Now we assume that $Z(\sfG)$ is connected and $\sfG_{\ad}$ is $\ff$-simple.
In this case, we have a surjective homomorphism $\Res_{\ff'/\ff} (\sfH) \to \sfG$ with kernel a central torus, where $\sfH$ is an algebraic group defined over a finite extension $\ff'$ of $\ff$ such that $Z(\sfH)$ is connected and $\sfH_{\ad}$ is absolutely simple.
Using the same argument as above, we may suppose that $\sfG$ is of the form $\Res_{\ff'/\ff} (\sfH)$.
Since $\sfM$ and $\sfT$ are defined over $\ff$, we have $\sfM = \Res_{\ff'/\ff} (\sfM_{\sfH})$ and $\sfT = \Res_{\ff'/\ff} (\sfT_{\sfH})$ for a Levi subgroup $\sfM_{\sfH}$ of $\sfH$ and a maximal torus $\sfT_{\sfH}$ of $\sfM_{\sfH}$.
We regard $s$ with an element of $\sfT_{\sfH}^{*}(\ff') = \Res_{\ff'/\ff}(\sfT_{\sfH}^{*})(\ff) = \sfT^{*}(\ff)$.
Then we have, $\sfG^{*}_{s} = \Res_{\ff'/\ff} \left(
\sfH^{*}_{s}
\right)$ and $\sfM^{*}_{s} = \Res_{\ff'/\ff} \left(
(\sfM_{\sfH})^{*}_{s}
\right)$.
According to \cite[Corollary~8.8]{MR3935811}, we have $\mathscr{E}(\sfM, s) = \mathscr{E}(\sfM_{\sfH}, s)$ as sets of irreducible representations of $\sfM(\ff) = \sfM_{\sfH}(\ff')$, and $\Uch(\sfM^{*}_{s}) = \Uch((\sfM_{\sfH})_{s}^{*})$ as sets of irreducible representations of $\sfM^{*}_{s}(\ff) = (\sfM_{\sfH})_{s}^{*}(\ff')$.
Since the conditions in \cite[Theorem~7.1]{MR1051245} for $\sfM$ and $\sfM_{\sfH}$ are equivalent, we obtain that the bijections
\[
J_{s}^{\sfM} \colon \mathscr{E}(\sfM, s) \isoarrow \Uch(\sfM_{s}^{*})
\qquad
\text{and}
\qquad
J_{s}^{\sfM_{\sfH}} \colon \mathscr{E}(\sfM_{\sfH}, s) \isoarrow \Uch((\sfM_{\sfH})_{s}^{*})
\]
agree.
Thus, we have reduced Proposition~\ref{prop:reductiontounipotentqparameters} for $\sfG$ to the same claim for $\sfH$.

\subsection{Proof of Proposition~\ref{prop:reductiontounipotentqparameters}}
\label{subsec:connectedcenter}

In this subsection, we will prove Proposition~\ref{prop:reductiontounipotentqparameters}.
By the reduction arguments in Section~\ref{subsec:reductiontosimpleconnectedcenter}, we may suppose that $Z(\sfG)$ is connected and $\sfG_{\ad}$ is simple.
In this case, according to \cite[Corollary~4.7.6]{MR4211779}, the Jordan decompositions $J^{\sfG}_{s}$ and $J^{\sfM}_{s}$ commute with the parabolic inductions.
In particular, $J^{\sfG}_{s}$
restricts to a bijection
\[
\left\{
\rho \in \mathscr{E}(\sfG, s) \mid \Hom_{\sfG(\ff)}\left(
\rho, R_{\sfM}^{\sfG} (\rho_{\sfM})
\right) \neq \{0\}
\right\} \isoarrow
\left\{
u^{*} \in \Uch(\sfG^{*}_{s}) \mid \Hom_{\sfG^{*}_{s}(\ff)}\left(
u^{*}, R_{\sfM^{*}_{s}}^{\sfG^{*}_{s}} (u^{*}_{\sfM})
\right) \neq \{0\}
\right\}.
\]
Hence, the lengths of the representations $R_{\sfM}^{\sfG} (\rho_{\sfM})$ and $R_{\sfM^{*}_{s}}^{\sfG^{*}_{s}} (u^{*}_{\sfM})$, which are at most two, are the same.
Suppose that they have length two, and we write $R_{\sfM}^{\sfG} (\rho_{\sfM}) = \rho_{1} \oplus \rho_{2}$ with $\dim(\rho_{1}) \ge \dim(\rho_{2})$ and $R_{\sfM^{*}_{s}}^{\sfG^{*}_{s}} (u^{*}_{\sfM}) = u^{*}_{1} \oplus u^{*}_{2}$ with $\dim(u^{*}_{1}) \ge \dim(u^{*}_{2})$.
Then according to \cite[Corollary~2.6.6]{MR4211779}, we have $\frac{\dim(\rho_{1})}{\dim(\rho_{2})} = \frac{\dim(u^{*}_{1})}{\dim(u^{*}_{2})}$.
Thus, we obtain the claim.

\subsection{A generalization of Proposition~\ref{prop:reductiontounipotentqparameters}}

In this subsection, we will generalize Proposition~\ref{prop:reductiontounipotentqparameters} to Corollary~\ref{corollary:reductiontounipdualver} for later use.
The reason why such a generalization is necessary will be explained in Remark~\ref{remarkwhyuptotakingduals} below.

\begin{definition}
\label{def:almostisomforgroups}
Let $\sfG_{1}$ and $\sfG_{2}$ be connected reductive groups defined over $\ff$.
We decompose the adjoint groups of $\sfG_{1}$ and $\sfG_{2}$ into $\ff$-simple factors as
\[
\sfG_{1, \ad} = \prod_{1 \le i \le m} \sfG_{1, i}
\qquad
\text{and}
\qquad
\sfG_{2, \ad} = \prod_{1 \le j \le n} \sfG_{2, j},
\]
where $\sfG_{1, i}$ and $\sfG_{2, j}$ are $\ff$-simple subgroups of $\sfG_{1, \ad}$ and $\sfG_{2, \ad}$, respectively.
\begin{enumerate}[(1)]
\item
We say that the groups $\sfG_{1}$ and $\sfG_{2}$ are \emph{adjointly isomorphic up to taking duals} and write $\sfG_{1} \sim_{\dual} \sfG_{2}$ if $m = n$, and for each $1 \le i \le n$, one of the conditions $\sfG_{1, i} \simeq \sfG_{2, i}$ or $(\sfG^{*}_{1, i})_{\ad} \simeq \sfG_{2, i}$ holds.
\item
Let $\sfM_{1}$ (resp.\ $\sfM_{2}$) be a Levi subgroup of $\sfG_{1}$ (resp.\ $\sfG_{2}$).
We write $\sfM_{1, \sfG-\ad}$ (resp.\ $\sfM_{2, \sfG-\ad}$) for the image of $\sfM_{1}$ (resp.\ $\sfM_{2}$) in $\sfG_{1, \ad}$ (resp.\ $\sfG_{2, \ad}$), and let $\sfM_{1, i}$ (resp.\ $\sfM_{2, i}$) denote the projection of $\sfM_{1, \sfG-\ad}$ (resp.\ $\sfM_{2, \sfG-\ad}$) to $\sfG_{1, i}$ (resp.\ $\sfG_{2, i}$).
We say that the pairs $(\sfG_{1}, \sfM_{1})$ and $(\sfG_{2}, \sfM_{2})$ are \emph{adjointly isomorphic up to taking duals} and write $(\sfG_{1}, \sfM_{1}) \sim_{\dual} (\sfG_{2}, \sfM_{2})$ if $\sfG_{1} \sim_{\dual} \sfG_{2}$, and for each $i$, one of the following conditions holds:
\begin{itemize}
\item
we have an isomorphism $\sfG_{1, i} \simeq \sfG_{2, i}$ that restricts to an isomorphism $\sfM_{1, i} \simeq \sfM_{2, i}$;
\item
we have an isomorphism $(\sfG^{*}_{1, i})_{\ad} \simeq \sfG_{2, i}$ that restricts to an isomorphism $(\sfM^{*}_{1, i})_{\sfG-\ad} \simeq \sfM_{2, i}$, where $(\sfM^{*}_{1, i})_{\sfG-\ad}$ denotes the image of $\sfM^{*}_{1, i}$ in $(\sfG^{*}_{1, i})_{\ad}$.
\end{itemize}
\end{enumerate}
\end{definition}

When we write $\sfG_{1} \sim_{\dual} \sfG_{2}$ or $(\sfG_{1}, \sfM_{1}) \sim_{\dual} (\sfG_{2}, \sfM_{2})$ below, we always fix isomorphisms $\sfG_{1, i} \simeq \sfG_{2, i}$ and $(\sfG^{*}_{1, i})_{\ad} \simeq \sfG_{2, i}$ above.

We record a criterion for the pairs $(\sfG_{1}, \sfM_{1})$ and $(\sfG_{2}, \sfM_{2})$ to be adjointly isomorphic up to taking duals in terms of root systems.
\begin{lemma}
\label{lemmacriterionforalmostisomorphic}
Let $\sfG_1$ and $\sfG_2$ be connected reductive groups over $\ff$.
Let $\sfT_{1}$ (resp.\ $\sfT_{2}$) be a maximal torus of $\sfG_{1}$ (resp.\ $\sfG_{2}$).
Suppose that the root systems $\Phi(\sfG_1, \sfT_1)$ and $\Phi(\sfG_2, \sfT_2)$ decompose into subsystems
\[
\Phi(\sfG_1, \sfT_1) = \bigsqcup_{1 \le i \le n} \Phi_{1, i} 
\qquad
\text{and}
\qquad
\Phi(\sfG_2, \sfT_2) = \bigsqcup_{1 \le i \le n} \Phi_{2, i} 
\]
such that  for each $i$ we have:
\begin{enumerate}[(1)]
\item the subsystem $\Phi_{1, i}$ (resp.\ $\Phi_{2, i}$) is a single $\Gal(\overline{\ff}/\ff)$-orbit of an irreducible component of $\Phi(\sfG_1, \sfT_1)$ (resp.\ $\Phi(\sfG_2, \sfT_2)$);
\item one of the conditions $\Phi_{1, i} \simeq \Phi_{2, i}$ or $\Phi_{1, i}^{\vee} \simeq \Phi_{2, i}$ holds, where $\Phi_{1, i}^{\vee}$ denotes the dual root system of $\Phi_{1, i}$, and the isomorphisms are as $\Gal(\overline{\ff}/\ff)$-sets.
\end{enumerate}
Then the groups $\sfG_1$ and $\sfG_2$ are adjointly isomorphic up to taking duals.
Moreover, suppose that $\sfM_1$ (resp.\ $\sfM_2$) is a Levi subgroup of $\sfG_1$ (resp.\ $\sfG_2$) containing $\sfT_1$ (resp.\ $\sfT_2$) such that for each $i$, at least one of the following conditions holds:
\begin{itemize}
\item
we have an isomorphism of $\Gal(\overline{\ff}/\ff)$-sets $\Phi_{1, i} \simeq \Phi_{2, i}$ that restricts to an isomorphism $\Phi_{1, i} \cap \Phi(M_{1}, T_{1}) \simeq \Phi_{2, i} \cap \Phi(M_{2}, T_{2})$;
\item
we have an isomorphism of $\Gal(\overline{\ff}/\ff)$-sets $\Phi_{1, i}^{\vee} \simeq \Phi_{2, i}$ that restricts to an isomorphism $\Phi_{1, i}^{\vee} \cap \Phi(M_{1}, T_{1})^{\vee} \simeq \Phi_{2, i} \cap \Phi(M_{2}, T_{2})$, where $\Phi(M_{1}, T_{1})^{\vee}$ denotes the dual root system of $\Phi(M_{1}, T_{1})$.
\end{itemize}
Then the pairs $(\sfG_{1}, \sfM_{1})$ and $(\sfG_{2}, \sfM_{2})$ are adjointly isomorphic up to taking duals.
\end{lemma}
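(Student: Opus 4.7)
The plan is to reduce to the case of $\ff$-simple adjoint groups and then invoke the classification of quasi-split adjoint reductive groups over a finite field by their absolute root systems equipped with Galois action.

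First I would recall that any $\ff$-simple adjoint reductive group is of the form $\Res_{\ff'/\ff}(\sfH)$ for some absolutely simple adjoint $\sfH$ defined over a finite extension $\ff'/\ff$; the absolute root system of such a Weil restriction is a disjoint union of $[\ff':\ff]$ Galois conjugates of $\Phi(\sfH)$, forming a single $\Gal(\overline{\ff}/\ff)$-orbit of irreducible components. Consequently, the $\ff$-simple factors of $\sfG_{k, \ad}$ are in natural bijection with the $\Gal(\overline{\ff}/\ff)$-orbits of irreducible components of $\Phi(\sfG_k, \sfT_k)$, and the indexing $\Phi_{k, i}$ in the hypothesis of the lemma matches the one from Definition~\ref{def:almostisomforgroups}: the $\ff$-simple factor $\sfG_{k, i}$ has absolute root system $\Phi_{k, i}$, with its Galois action inherited from $\sfT_k$.

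Next I would invoke two standard facts: every connected reductive group over a finite field is quasi-split by Lang--Steinberg, and a quasi-split adjoint $\ff$-group is determined up to $\ff$-isomorphism by its absolute root system endowed with the $\Gal(\overline{\ff}/\ff)$-action from any maximal torus, since for an adjoint group the character lattice equals $\bZ \Phi$ and so the full root datum is canonically reconstructed from $\Phi$. Applied to each $i$, this immediately handles the case $\Phi_{1, i} \simeq \Phi_{2, i}$, producing an $\ff$-isomorphism $\sfG_{1, i} \simeq \sfG_{2, i}$. In the case $\Phi_{1, i}^\vee \simeq \Phi_{2, i}$, the dual group $\sfG^*_{1, i}$ has absolute root system $\Phi_{1, i}^\vee$ with Galois action induced on $X_*(\sfT_{1, i})$, and its adjoint quotient $(\sfG^*_{1, i})_{\ad}$ is therefore the $\ff$-simple adjoint group attached to $\Phi_{1, i}^\vee$; the previous case then yields $(\sfG^*_{1, i})_{\ad} \simeq \sfG_{2, i}$. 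Combining these statements over all $i$ gives $\sfG_1 \sim_\dual \sfG_2$.

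For the Levi refinement, I would use that Levi subgroups of a connected reductive $\ff$-group containing a fixed maximal torus correspond bijectively to Galois-stable, closed, symmetric subsystems of its root system, and that under the product decomposition $\sfG_{k, \ad} = \prod_i \sfG_{k, i}$ the image $\sfM_{k, \sfG-\ad}$ factors as $\prod_i \sfM_{k, i}$, with $\sfM_{k, i}$ the Levi of $\sfG_{k, i}$ having root system $\Phi_{k, i} \cap \Phi(\sfM_k, \sfT_k)$. An analogous statement holds on the dual side for $(\sfM^*_{1, i})_{\sfG-\ad}$. The hypothesis that the root-system isomorphism carries the Levi subsystems to each other then says that the $\ff$-isomorphism between $\sfG_{1, i}$ (or $(\sfG^*_{1, i})_\ad$) and $\sfG_{2, i}$ constructed above can be chosen to restrict to the prescribed identification on Levi subgroups, giving $(\sfG_1, \sfM_1) \sim_\dual (\sfG_2, \sfM_2)$. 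The main obstacle is the classification step itself: verifying that a purely combinatorial Galois-equivariant isomorphism of root systems lifts to an $\ff$-isomorphism of algebraic groups carrying the prescribed Levis to each other. This rests on (i) quasi-splitness of connected reductive groups over finite fields, (ii) the existence/isomorphism theorem for reductive groups in terms of Galois-equivariant root data, and (iii) the functoriality of the Levi-subsystem correspondence under such isomorphisms.
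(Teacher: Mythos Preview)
Your proposal is correct and follows essentially the same approach as the paper. The paper's proof is a one-line citation of \cite[Lemma~1.4.24]{MR4211779}, which states that the isomorphism class of an adjoint group over $\ff$ is determined by its root system together with the Galois action; you unpack this same fact (quasi-splitness over finite fields plus the existence/isomorphism theorem for root data) and spell out explicitly how to handle the dual case and the Levi refinement, details the paper leaves to the reader.
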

\begin{proof}
According to \cite[Lemma~1.4.24]{MR4211779}, the isomorphism class of an adjoint group over $\ff$ is determined by its root system and the Galois action on it.
Thus, we obtain the claim.
\end{proof}

Suppose that $(\sfG_{1}, \sfM_{1}) \sim_{\dual} (\sfG_{2}, \sfM_{2})$.
For $k \in \{1, 2\}$, let $u_{k}$ be a unipotent representation of $\sfM_{k}(\ff)$. 
According to \cite[Proposition~2.3.15]{MR4211779}, there exists a unique unipotent representation $u_{k, \ad}$ of $\sfM_{k, \sfG-\ad}(\ff)$ such that the restriction of $u_{k, \ad}$ to the image of $\sfM_{k}(\ff)$ agrees with $u_{k}$.
We decompose the representations $u_{1, \ad}$ and $u_{2, \ad}$ as 
\[
u_{1, \ad} = u_{1, 1} \boxtimes u_{1, 2} \boxtimes \cdots \boxtimes u_{1, n}
\qquad
\text{and}
\qquad
u_{2, \ad} = u_{2, 1} \boxtimes u_{2, 2} \boxtimes \cdots \boxtimes u_{2, n},
\]
where $u_{1, i}$ (resp.\ $u_{2, i}$) denotes a unipotent representation of $\sfM_{1, i}(\ff)$ (resp.\ $\sfM_{2, i}(\ff)$).

\begin{definition}
We say that the representations $u_{1}$ and $u_{2}$ are \emph{adjointly isomorphic up to taking duals} and write $u_{1} \sim_{\dual} u_{2}$ if for each $i$, the following conditions hold:
\begin{itemize}
\item
if $\sfG_{1, i} \simeq \sfG_{2, i}$, we have $u_{1} \simeq u_{2}$;
\item
if $(\sfG^{*}_{1, i})_{\ad} \simeq \sfG_{2, i}$, we have $J^{\sfM_{1, i}}_{1}(u_{1, i})_{\sfG-\ad} \simeq u_{2}$, where $J^{\sfM_{1, i}}_{1}(u_{1, i})$ denotes the unipotent representation of $\sfM^{*}_{1, i}(\ff)$ that corresponds to $u_{1, i}$ via the Jordan decomposition $J^{\sfM_{1, i}}_{1} \colon \Uch(\sfM_{1, i}) \isoarrow \Uch(\sfM^{*}_{1, i})$, and $J^{\sfM_{1, i}}_{1}(u_{1, i})_{\sfG-\ad}$ denotes the unique unipotent representation of $(\sfM^{*}_{1, i})_{\sfG-\ad}(\ff)$ whose restriction to the image of $\sfM^{*}_{1, i}(\ff)$ is $J^{\sfM_{1, i}}_{1}(u_{1, i})$.
\end{itemize} 
\end{definition}

\begin{remark}
Suppose that $(\sfG_{1}, \sfM_{1}) \sim_{\dual} (\sfG_{2}, \sfM_{2})$ and let $u_{1}$ be a unipotent representation of $\sfM_{1}(\ff)$.
Then there exists a unique unipotent representation $u_{2}$ of $\sfM_{2}(\ff)$ such that $u_{1} \sim_{\dual} u_{2}$.
Moreover, if $u_{1}$ is cuspidal, so is $u_{2}$ by \cite[Theorem~3.2.22]{MR4211779}.
\end{remark}

\begin{lemma}
\label{lemma:qparameterstakingduals}
Let $\sfG_{1}$ and $\sfG_{2}$ be a connected reductive group over $\ff$ and $\sfM_{1}$ (resp.\ $\sfM_{2}$) be a Levi subgroup of $\sfG_{1}$ (resp.\ $\sfG_{2}$) such that $(\sfG_{1}, \sfM_{1}) \sim_{\dual} (\sfG_{2}, \sfM_{2})$.
We suppose that $\sfM_{1}$ is either $\sfG_{1}$ or a maximal Levi subgroup of $\sfG_{1}$.
Let $u_{1}$ (resp.\ $u_{2}$) be a unipotent, cuspidal representation of $\sfM_{1}(\ff)$ (resp.\ $\sfM_{2}(\ff)$) such that $u_{1} \sim_{\dual} u_{2}$.
Then we have 
\[
q\left(
R_{\sfM_{1}}^{\sfG_{1}} (u_{1})
\right) = q\left(
R_{\sfM_{2}}^{\sfG_{2}} (u_{2})
\right).
\]
\end{lemma}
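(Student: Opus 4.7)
The plan is to reduce the equality to the case of $\ff$-simple adjoint groups in three stages.

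\emph{Stage 1 (reduction to the adjoint quotient).} I would apply Corollary~\ref{corollaryofcomparisonofqparameters} to the canonical map $f_k \colon \sfG_k(\ff) \to \sfG_{k,\ad}(\ff)$, taking $\sfP' = \sfP_{k,\ad}(\ff)$ for a parabolic $\sfP_k$ of $\sfG_k$ with Levi $\sfM_k$. Since $Z(\sfG_k) \subseteq \sfP_k$, the preimage $f_k^{-1}(\sfP')$ equals $\sfP_k(\ff)$. The image $f_k(\sfG_k(\ff))$ is normal with abelian quotient (the connecting map to $H^1(\ff, Z(\sfG_k))$ is a group homomorphism, since $Z(\sfG_k)$ is central), and by Lang--Steinberg, $H^1(\ff, \sfG_k) = H^1(\ff, \sfP_k) = 0$, so the connecting maps $\sfG_{k,\ad}(\ff) \to H^1(\ff, Z(\sfG_k))$ and $\sfP_{k,\ad}(\ff) \to H^1(\ff, Z(\sfG_k))$ are both surjective; this yields the required factorization $\sfG_{k,\ad}(\ff) = \sfP_{k,\ad}(\ff) \cdot f_k(\sfG_k(\ff))$. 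Since $u_k$ is by definition the pullback of $u_{k,\ad}$ along $f_k$, the corollary gives
\[
q(R^{\sfG_k}_{\sfM_k}(u_k)) = q(R^{\sfG_{k,\ad}}_{\sfM_{k,\sfG-\ad}}(u_{k,\ad})) \qquad (k = 1, 2).
\]

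\emph{Stage 2 (decomposition into $\ff$-simple factors).} Writing $\sfG_{k,\ad} = \prod_i \sfG_{k,i}$, one has $\sfM_{k,\sfG-\ad} = \prod_i \sfM_{k,i}$ and $u_{k,\ad} = \boxtimes_i u_{k,i}$, so the adjoint parabolic induction factors as $\bigotimes_i R^{\sfG_{k,i}}_{\sfM_{k,i}}(u_{k,i})$. Since $\sfM_1$ is either $\sfG_1$ or a maximal Levi, there is at most one index $i_0$ for which $\sfM_{k,i_0}$ is a proper (maximal) Levi, and the same $i_0$ works for $k = 1, 2$ by $(\sfG_1, \sfM_1) \sim_{\dual} (\sfG_2, \sfM_2)$. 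For $i \neq i_0$ the factor $R^{\sfG_{k,i}}_{\sfM_{k,i}}(u_{k,i}) = u_{k,i}$ is irreducible, so the $q$-parameter of the total induction equals that of the $i_0$-th factor, and it remains to prove $q(R^{\sfG_{1,i_0}}_{\sfM_{1,i_0}}(u_{1,i_0})) = q(R^{\sfG_{2,i_0}}_{\sfM_{2,i_0}}(u_{2,i_0}))$.

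\emph{Stage 3 (per-factor comparison).} By the definition of $u_1 \sim_{\dual} u_2$, at index $i_0$ either $\sfG_{1,i_0} \simeq \sfG_{2,i_0}$ carrying the Levi and the unipotent rep (whence the claim is immediate), or $(\sfG^*_{1,i_0})_{\ad} \simeq \sfG_{2,i_0}$ with matching data. In the latter case, since $\sfG_{1,i_0}$ is $\ff$-simple adjoint (hence has trivial, thus connected, center), I would apply Proposition~\ref{prop:reductiontounipotentqparameters} with $s = 1$:
\[
q(R^{\sfG_{1,i_0}}_{\sfM_{1,i_0}}(u_{1,i_0})) = q(R^{\sfG^*_{1,i_0}}_{\sfM^*_{1,i_0}}(J^{\sfM_{1,i_0}}_1(u_{1,i_0}))),
\]
then rerun the reduction of Stage 1 (now for $\sfG^*_{1,i_0}$) to pass to the adjoint quotient $(\sfG^*_{1,i_0})_{\ad}$, and finally transport along $(\sfG^*_{1,i_0})_{\ad} \simeq \sfG_{2,i_0}$, under which $J^{\sfM_{1,i_0}}_1(u_{1,i_0})_{\sfG-\ad}$ is identified with $u_{2,i_0}$. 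The main technical point will be the verification of the factorization $\sfG_{k,\ad}(\ff) = \sfP_{k,\ad}(\ff) \cdot f_k(\sfG_k(\ff))$ in Stage 1, which as sketched above relies on the Lang--Steinberg vanishing of $H^1$.
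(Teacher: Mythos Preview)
Your proof is correct and follows essentially the same three-stage strategy as the paper: reduce to the adjoint quotients via Corollary~\ref{corollaryofcomparisonofqparameters} (the paper cites Proposition~\ref{prop:comparisonofqparameters}, which amounts to the same thing here), split into $\ff$-simple factors, and handle the dual case factorwise by applying Proposition~\ref{prop:reductiontounipotentqparameters} with $s=1$ followed by another pass to the adjoint. Your Stage~1 is more explicit than the paper about why the hypotheses of the comparison result hold (the Lang--Steinberg argument for the factorization $\sfG_{k,\ad}(\ff) = \sfP_{k,\ad}(\ff)\cdot f_k(\sfG_k(\ff))$), but the overall approach is identical.
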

\begin{proof}
By using Proposition~\ref{prop:comparisonofqparameters}, we can reduce the claim to the same claim for $\sfG_{1, \ad}$ and $\sfG_{2, \ad}$.
Hence, it suffices to prove the equation for each $\ff$-simple factors of $\sfG_{1, \ad}$ and $\sfG_{2, \ad}$. 
We fix $1 \le i \le n$.
If $\sfG_{1, i} \simeq \sfG_{2, i}$, the claim is trivial.
Suppose that $(\sfG^{*}_{1, i})_{\ad} \simeq \sfG_{2, i}$.
Then applying Proposition~\ref{prop:reductiontounipotentqparameters} to $(\sfM, s, \rho_{\sfM}) = (\sfM_{1, i}, 1, u_{1, i})$, we have
\[
q\left(
R_{\sfM_{1, i}}^{\sfG_{1, i}} (u_{1, i})
\right) = q\left(
R_{\sfM^{*}_{1, i}}^{\sfG^{*}_{1, i}} (J^{\sfM_{1, i}}_{1}(u_{1, i}))
\right).
\]
Applying Proposition~\ref{prop:comparisonofqparameters} again, we obtain that 
\[
q\left(
R_{\sfM^{*}_{1, i}}^{\sfG^{*}_{1, i}} (J^{\sfM_{1, i}}_{1}(u_{1, i}))
\right)
=
q\left(
R_{(\sfM^{*}_{1, i})_{\sfG-\ad}}^{(\sfG^{*}_{1, i})_{\ad} } (J^{\sfM_{1, i}}_{1}(u_{1, i})_{\sfG-\ad})
\right).
\]
Moreover, since $u_{1} \sim_{\dual} u_{2}$, we have
\[
R_{(\sfM^{*}_{1, i})_{\sfG-\ad}}^{(\sfG^{*}_{1, i})_{\ad}} (J^{\sfM_{1, i}}_{1}(u_{1, i})_{\sfG-\ad})
\simeq
R_{\sfM_{2, i}}^{\sfG_{2, i}} (u_{2, i})
\]
as representations of $(\sfG^{*}_{1, i})_{\ad}(\ff) \simeq \sfG_{2, i}(\ff)$.
Thus, we obtain the claim.
\end{proof}

Now, we generalize Proposition~\ref{prop:reductiontounipotentqparameters}.

\begin{corollary}
\label{corollary:reductiontounipdualver}
Let $\sfG'_{s}$ be a connected reductive group over $\ff$ and $\sfM'_{s}$ be a Levi subgroup of $\sfG'_{s}$ such that $(\sfG^{*}_{s}, \sfM^{*}_{s}) \sim_{\dual} (\sfG'_{s}, \sfM'_{s})$.
Let $u'_{\sfM}$ be a unipotent, cuspidal representation of $\sfM'_{s}$ such that $u^{*}_{\sfM} \sim_{\dual} u'_{\sfM}$.  
Then we have
\[
q\left(
R_{\sfM}^{\sfG} (\rho_{\sfM})
\right) = q\left(
R_{\sfM'_{s}}^{\sfG'_{s}} (u'_{\sfM})
\right).
\]
\end{corollary}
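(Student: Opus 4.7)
The natural plan is to chain Proposition~\ref{prop:reductiontounipotentqparameters} with Lemma~\ref{lemma:qparameterstakingduals}. The idea is to first rewrite $q(R_{\sfM}^{\sfG}(\rho_{\sfM}))$ as a unipotent $q$-parameter on the dual side via Proposition~\ref{prop:reductiontounipotentqparameters}, and then transport that unipotent $q$-parameter along the adjoint isomorphism up to taking duals by Lemma~\ref{lemma:qparameterstakingduals}. Since both of these ingredients are already available, the argument should be essentially a two-line concatenation, with the substance of the work concentrated in verifying the applicability of the second ingredient.

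Concretely, the first step is to apply Proposition~\ref{prop:reductiontounipotentqparameters} to $(\sfG, \sfM, \rho_{\sfM})$, which directly yields
\[
q\bigl(R_{\sfM}^{\sfG}(\rho_{\sfM})\bigr) \;=\; q\bigl(R_{\sfM^{*}_{s}}^{\sfG^{*}_{s}}(u^{*}_{\sfM})\bigr).
\]
For the second step, I would apply Lemma~\ref{lemma:qparameterstakingduals} with the input $(\sfG_{1}, \sfM_{1}, u_{1}) = (\sfG^{*}_{s}, \sfM^{*}_{s}, u^{*}_{\sfM})$ and $(\sfG_{2}, \sfM_{2}, u_{2}) = (\sfG'_{s}, \sfM'_{s}, u'_{\sfM})$. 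The compatibility hypotheses $(\sfG^{*}_{s}, \sfM^{*}_{s}) \sim_{\dual} (\sfG'_{s}, \sfM'_{s})$ and $u^{*}_{\sfM} \sim_{\dual} u'_{\sfM}$ are exactly the standing assumptions of the corollary, so the lemma delivers
\[
q\bigl(R_{\sfM^{*}_{s}}^{\sfG^{*}_{s}}(u^{*}_{\sfM})\bigr) \;=\; q\bigl(R_{\sfM'_{s}}^{\sfG'_{s}}(u'_{\sfM})\bigr).
\]
Concatenating the two displayed equalities gives the desired identity.

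The main obstacle I foresee is verifying the remaining hypothesis of Lemma~\ref{lemma:qparameterstakingduals}, namely that $\sfM^{*}_{s}$ is $\sfG^{*}_{s}$ itself or a maximal Levi subgroup of $\sfG^{*}_{s}$. This is not automatic from $\sfM$ being maximal in $\sfG$, since the identity $\Phi(\sfM^{*}_{s}, \sfT^{*}) = \Phi(\sfG^{*}_{s}, \sfT^{*}) \cap \Phi(\sfM^{*}, \sfT^{*})$ a priori allows the semisimple rank to drop by more than one when passing from $\sfG^{*}_{s}$ to $\sfM^{*}_{s}$. I would handle this either by a careful root-system argument controlling the intersection above under the running setup, or, failing that, by observing that the $q$-parameter on both sides is controlled by the length-at-most-two property built into the proof of Proposition~\ref{prop:reductiontounipotentqparameters}, and then revisiting the proof of Lemma~\ref{lemma:qparameterstakingduals} to check that its argument relies only on this length bound, enabling an application to the pair $(\sfG^{*}_{s}, \sfM^{*}_{s})$ even without strict maximality.
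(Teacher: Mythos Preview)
Your approach is correct and matches the paper's one-line proof verbatim: ``The corollary follows from Proposition~\ref{prop:reductiontounipotentqparameters} and Lemma~\ref{lemma:qparameterstakingduals}.'' The maximality concern you raise about $\sfM^{*}_{s}$ in $\sfG^{*}_{s}$ is not addressed in the paper either; note, however, that in the only application (the proof of Theorem~\ref{mainthm:heckealgebraisom}) the pair $(\sfG'_{s},\sfM'_{s})=((\sfG_{\theta})^{\circ}_{h},(\sfM_{\theta})^{\circ}_{h})$ has $\sfM'_{s}$ maximal-or-equal in $\sfG'_{s}$ by construction, and the relation $\sim_{\dual}$ transports this back to $(\sfG^{*}_{s},\sfM^{*}_{s})$, so Lemma~\ref{lemma:qparameterstakingduals} applies there without further argument.
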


\begin{proof}
The corollary follows from Proposition~\ref{prop:reductiontounipotentqparameters} and Lemma~\ref{lemma:qparameterstakingduals}.
\end{proof}

\section{Parameters and affine Hecke algebras for $p$-adic groups}
\label{section:heckealgebraisom}

In this section, we will prove the main theorem of this paper.
Let $F$ be a non-archimedean local field and $G$ be a connected reductive group defined over $F$.
We will prove that the affine Hecke algebra appearing in the description of the Hecke algebra attached to a depth zero type for $G$ in \cite[Theorem~5.3.6]{2024arXiv240807801A} is isomorphic to the affine Hecke algebra appearing in the description of the Hecke algebra attached to a unipotent type for a reductive group $G_{\theta}$ that splits over an unramified extension of $F$.
By combining this with the Hecke algebra isomorphisms in \cite[Theorem~4.4.1]{2024arXiv240807805A}, we will also obtain that the affine Hecke algebra appearing in the description of the Hecke algebra attached to a type constructed by Kim and Yu is isomorphic to the affine Hecke algebra attached to a unipotent type.
In particular, we will prove a version of Lusztig's conjecture about parameters of Hecke algebras assuming that $G$ splits over a tamely ramified extension of $F$ and $p \nmid \abs{W}$.

\subsection{Notation}

Let $F$ be a non-archimedean local field with residue characteristic $p$.
We write $\ff$ for the residue field of $F$.
We fix a separable closure $\overline{F}$ of $F$.
When we refer to a separable field extension $E/F$, we always assume that $E \subseteq \overline{F}$.
We write $F^{\unr}$ for the maximal unramified extension of $F$.
For an algebraic extension $E$ of $F$, let $\cO_{E}$ denote the ring of integers of $E$.

For a linear algebraic group $G$ over $F$ and an algebraic field extension $E$ of $F$, we write $G_{E}$ for the base change of $G$ to $E$.

Suppose that $G$ is a connected reductive group defined over $F$.
We write $Z(G)$ for the center of $G$ and $A_{G}$ for the maximal split torus in $Z(G)$.
For a torus $T$ of $G$,
we denote by $X^{*}(T)$ and $X_{*}(T)$ the (algebraic) character group and cocharacter group of $T_{\overline F}$, respectively.
For a compact, open subgroup $K$ of $G(F)$ and an irreducible representation $\rho$ of $K$, let $\cH(G(F), (K, \rho))$ denote the Hecke algebra attached to $(K, \rho)$.
We refer to \cite[Section~2.2]{2024arXiv240807801A} for the precise definition of $\cH(G(F), (K, \rho))$.

Let $E = F$ or $F^{\unr}$.
For a maximal $E$-split $F$-torus $T$, let $\Phi(G, T, E)$ denote the relative root system of $G_{E}$ with respect to $T_{E}$.
We also let $\Phi_{\aff}(G, T, E)$ denote the relative affine root system associated to $(G_{E}, T_{E})$ by the work of Bruhat and Tits (\cite{MR327923}).
For $\alpha \in \Phi(G, T, E)$, let $\alpha^{\vee}$ be the corresponding coroot.
For $a \in \Phi_{\aff}(G, T, E)$, let $D_{T}(a) \in \Phi(G, T, E)$ denote the gradient of $a$.

We denote by $\cB(G, E)$ the enlarged Bruhat--Tits building of $G_{E}$, and for a maximal $E$-split $F$-torus $T$ of $G$, we denote by $\cA(G, T, E)$ the apartment of $T_{E}$ in $\cB(G, E)$.
We also write $\cB^{\red}(G, E)$ for the reduced building of $G_{E}$ and $\cA^{\red}(G, T, E)$ for the apartment of $T_{E}$ in $\cB^{\red}(G, E)$.
For $x \in \cB(G, E)$, we denote by $[x]_{G}$ the image of $x$ in $\cB^{\red}(G, E)$.
Let $x \in \cB(G, E)$.
For any abstract group $H$ that acts on $G(E)$, and thus on $\cB(G,E)$ and $\cB^{\red}(G,E)$,
we let $H_{x}$ and $H_{[x]_G}$ denote the stabilizers of $x$ and $[x]_G$ under the actions of $H$, respectively.
We denote by $G(E)_{x,0}$ the group of $\cO_{E}$-points of the connected parahoric group scheme
of $G$ associated to the point $x$,
and by $G(E)_{x,0+}$ the pro-$p$ radical of $G(E)_{x,0}$.
The reductive quotient of the special fiber of the connected parahoric group scheme above will be denoted by $\sfG^{\circ}_{x}$.
Thus, $\sfG^{\circ}_{x}(\overline{\ff}) = G(F^{\unr})_{x, 0}/G(F^{\unr})_{x, 0+}$, and $\sfG^{\circ}_{x}(\ff) = G(F)_{x,0} / G(F)_{x, 0+}$ if $x \in \cB(G, F)$.
In the case that $G$ is a torus, we omit the notation $x$ and write $G(E)_{x, 0} = G(E)_{0}$, $G(E)_{x, 0+} = G(E)_{0+}$, and $\sfG^{\circ}_{x} = \sfG^{\circ}$, which do not depend on $x$.

Let $\IEC(G)$ denote the set of \emph{inertial equivalence
classes},
i.e., equivalence classes $[L,\sigma]_G$ of cuspidal
pairs $(L,\sigma)$ in $G$, where $L$ is a Levi subgroup of $G$,
$\sigma$ is an irreducible, supercuspidal representation of $L(F)$,
and where the equivalence is given by conjugation by $G(F)$
and twisting by unramified characters of $L(F)$.
Let $K$ be a compact, open subgroup of $G(F)$ and $\rho$ is an irreducible representation of $K$.
We say that $(K, \rho)$ is a \emph{depth-zero type} if there exists $x \in \cB(G, F)$ and an irreducible, cuspidal representation $\rho_{x, 0}$ of $\sfG^{\circ}_{x}(\ff)$ such that $G(F)_{x, 0} \subseteq K \subseteq G(F)_{x}$ and the restriction of $\rho$ to $G(F)_{x, 0}$ contains the inflation of $\rho_{x, 0}$ to $G(F)_{x, 0}$.
A depth-zero type is an $\fS$-type for 
a finite subset $\fS$ of $\IEC(G)$ in the sense of \cite[Section~4]{BK-types}.
If the representation $\rho_{x, 0}$ is a unipotent representation of $\sfG^{\circ}_{x}(\ff)$, we say that $(K, \rho)$ is a \emph{unipotent type}.

\subsection{Review of \cite{2024arXiv240807801A}}
\label{subsec:reviewofAFMO}

In this section, we recall the description of the Hecke algebras attached to depth-zero types given in \cite[Section~5]{2024arXiv240807801A}, which is a generalization of \cite[Theorem 7.12]{Morris}.

Let $G$ be a connected reductive group defined over $F$ and $M$ be a Levi subgroup of $G$.
Let $x_{0} \in \cB(M, F)$ such that the image $[x_{0}]_{M}$ under the projection to $\cB^{\red}(M, F)$ is a vertex.
We fix a $0$-generic admissible embedding $\iota \colon \cB(M, F) \hookrightarrow \cB(G, F)$ relative to $x_{0}$ in the sense of \cite[Definition~3.2]{Kim-Yu}, that is, $M(F)_{x_0, 0}/M(F)_{x_0, 0+} \simeq G(F)_{\iota(x_0), 0}/G(F)_{\iota(x_0), 0+}$.
We use this embedding to identify $\cB(M, F)$ with its image in $\cB(G, F)$.
Recalling that $A_{M}$ denotes the maximal split torus in the center $Z(M)$ of $M$,
we write $\cA_{x_0} = x_{0} + (X_{*}(A_{M}) \otimes_{\bZ} \bR) \subseteq \cB(M, F)$ (see \cite[Section~3.1]{2024arXiv240807801A}).

Let $\rho_{M, 0}$ be an irreducible, cuspidal representation of $\sfM^{\circ}_{x_{0}}(\ff)$.
We identify $\rho_{M, 0}$ with its inflation to $M(F)_{x_0, 0}$.
We also introduce a ``disconnected version'' $\rho_{M, \dc}$ of $\rho_{M, 0}$.
Let $\rho_{M, \dc}$ be an irreducible representation of $M(F)_{x_{0}}$ such that $\rho_{M, \dc} \restriction_{M(F)_{x_{0}, 0}}$ contains $\rho_{M, 0}$.
Let $(K_{M}, \rho_{M})$ be either $(M(F)_{x_0, 0}, \rho_{M, 0})$ or $(M(F)_{x_0}, \rho_{M, \dc})$.
Then the triple $\left(
(G, M), (x_{0}, \iota), (K_{M}, \rho_{M})
\right)$ is a depth-zero $G$-datum in the sense of \cite[Section~7.1]{Kim-Yu}.

Let $\sfT^{\circ}$ be a maximal torus of $\sfM^{\circ}_{x_{0}}$ and $\theta$ be a character of $\sfT^{\circ}(\ff)$ such that $\rho_{M, 0}$ occurs in the Deligne--Lusztig induction $R^{\sfM^{\circ}_{x_{0}}}_{\sfT^{\circ}}(\theta)$.
Since $\rho_{M, 0}$ is cuspidal, the torus $\sfT^{\circ}$ is an elliptic torus of $\sfM^{\circ}_{x_0}$.
According to \cite[Lemma~2.3.1]{MR2214792}, there exists a maximal $F^{\unr}$-split torus $T$ of $M$ such that $x_{0} \in \cA(M, T, F^{\unr})$ and $\sfT^{\circ}(\ff) = T(F)_{0}/T(F)_{0+}$.
Since $\sfT^{\circ}$ is an elliptic torus of $\sfM^{\circ}_{x_0}$ and $[x_{0}]_{M}$ is a vertex of $\cB^{\red}(M, F)$, the torus $T$ is an elliptic torus of $M$.
We identify $\theta$ with its inflation to $T(F)_{0}$.

In \cite[Section~5.1]{2024arXiv240807801A}, we constructed a family $\cK$ of compact, open subgroups of $G(F)$ and their irreducible smooth representations, which we recall here briefly.
For $x \in \cA_{x_{0}}$, we set $K_{x} = K_{M} \cdot G(F)_{x, 0}$ and $K_{x, +} = G(F)_{x, 0+}$.
Let $\cA_{\gen}$ be the set of $x \in \cA_{x_{0}}$ such that the embedding $\iota$ is $0$-generic relative to $x$.
For $x \in \cA_{\gen}$, we define an irreducible smooth representation $\rho_{x}$ of $K_{x}$ as the composition of $\rho_{M}$ with the inverse of the isomorphism $K_{M}/M(F)_{x, 0+} \isoarrow K_{x}/K_{x, 0+}$ that comes from the inclusion $K_{M} \subseteq K_{x}$.
According to \cite[Section~7.1]{Kim-Yu}, which is based on \cite{MR1371680} and \cite{BK-types}, for $x \in \cA_{\gen}$, the pair $(K_{x}, \rho_{x})$ is a depth-zero type attached to a single Bernstein block if $(K_{M}, \rho_{M}) = (M(F)_{x_0}, \rho_{M, \dc})$.
In the case that $(K_{M}, \rho_{M}) = (M(F)_{x_0, 0}, \rho_{M, 0})$, the pair $(K_{x}, \rho_{x})$ is a depth-zero type attached to a finite set of Bernstein blocks.
We define 
$
\cK = 
\left\{
(K_{x}, K_{x, +}, \rho_{x})
\right\}_{x \in \cA_{\gen}}
$.

In \cite[Theorem~5.3.6]{2024arXiv240807801A}, we gave an explicit description of the Hecke algebra $\cH(G(F), (K_{x_0}, \rho_{x_0}))$ as a semi-direct product of an affine Hecke algebra with a twisted group algebra.
When $(K_{M}, \rho_{M}) = (M(F)_{x_0, 0}, \rho_{M, 0})$, such description was obtained by Morris in \cite[Theorem 7.12]{Morris}.
To explain \cite[Theorem~5.3.6]{2024arXiv240807801A}, we prepare some notation.
Following \cite[Sections~3.3, 3.4, 5.3]{2024arXiv240807801A}, we define the groups $\Nheart$ and $\Wheart$ by
\[
\Nheart = N_{G(F)}(\rho_{M}) \cap N_{G}(M)(F)_{[x_0]_{M}}
\qquad
\text{and}
\qquad
\Wheart = \Nheart / K_{M}.
\]
We note that the action of $N_{G}(M)(F)_{[x_0]_{M}}$ on $\cA_{x_0}$ induces an action of $\Wheart$ on $\cA_{x_0}$.
Next, we recall the definition of the normal subgroup $W(\rho_{M})_{\aff}$ of $\Wheart$ from \cite[Sections~3.7, 5.3]{2024arXiv240807801A}.
We fix a maximal split torus $S$ of $M$ such that $x_{0} \in \cA(M, S, F)$.
We define the set $\Phi_{\aff}(G, A_{M})$ of affine functionals on $\cA_{x_0}$ by
\[
\Phi_{\aff}(G, A_{M}) = 
\left\{
a \restriction_{\cA_{x_{0}}} \mid a \in \Phi_{\aff}(G, S, F) \smallsetminus \Phi_{\aff}(M, S, F)
\right\}.
\]
According to \cite[Lemma~5.2.1]{2024arXiv240807801A}, the set $\Phi_{\aff}(G, A_{M})$ does not depend on the choice of $S$.
For later use, we record another description of $\Phi_{\aff}(G, A_{M})$.
\begin{lemma}
\label{lemma:descriptionofphiaffGaM}
We have
\[
\Phi_{\aff}(G, A_{M}) = \left\{
a \restriction_{\cA_{x_{0}}} \mid a \in \Phi_{\aff}(G, T, F^{\unr}) \smallsetminus \Phi_{\aff}(M, T, F^{\unr})
\right\}.
\]
\end{lemma}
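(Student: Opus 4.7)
The plan is to prove the equality by showing that both sides admit the same intrinsic description in terms of the restriction of affine functionals to $\cA_{x_0}$. After possibly replacing $S$ by a $G(F)$-conjugate (which does not change the set under consideration by \cite[Lemma~5.2.1]{2024arXiv240807801A}), one can arrange $S \subseteq T$, and standard Bruhat--Tits theory \cite{MR327923} identifies $\cA(M, S, F)$ with the $\Gal(F^{\unr}/F)$-fixed subspace of $\cA(M, T, F^{\unr})$. Since $A_M \subseteq S$ is $F$-split and $x_0 \in \cA(M, S, F)$, the affine subspace $\cA_{x_0}$ lies inside $\cA(M, S, F)$ and is pointwise fixed by $\Gal(F^{\unr}/F)$. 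Consequently, for any $a \in \Phi_{\aff}(G, T, F^{\unr})$ and $\sigma \in \Gal(F^{\unr}/F)$, the restrictions $a \restriction_{\cA_{x_0}}$ and $(\sigma \cdot a) \restriction_{\cA_{x_0}}$ agree.

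Next I would rewrite both sides intrinsically. For $a \in \Phi_{\aff}(G, T, F^{\unr})$, the condition $a \in \Phi_{\aff}(M, T, F^{\unr})$ is equivalent to $D_T(a) \in \Phi(M, T, F^{\unr})$, which, since $M = Z_G(A_M)$, is equivalent to $D_T(a) \restriction_{A_M} = 0$, i.e., to $a \restriction_{\cA_{x_0}}$ being a constant function. The analogous equivalence holds on the $F$-rational side with $S$ in place of $T$. Therefore both sides of the claimed equality consist precisely of the non-constant restrictions to $\cA_{x_0}$ of elements of the respective affine root systems. It then suffices to show that the restrictions to $\cA_{x_0}$ of $\Phi_{\aff}(G, S, F)$ and of $\Phi_{\aff}(G, T, F^{\unr})$ have the same image, which follows from the Bruhat--Tits description of $\Phi_{\aff}(G, S, F)$: every $F$-rational affine root arises from a $\Gal(F^{\unr}/F)$-orbit in $\Phi_{\aff}(G, T, F^{\unr})$ by restriction to the fixed apartment, and by the previous paragraph all members of such an orbit restrict identically to $\cA_{x_0}$.

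The hard part will be pinning down the precise Bruhat--Tits correspondence between $F$-rational and $F^{\unr}$-rational affine roots, especially the handling of normalization constants that can arise from ramified root data (e.g., sum-versus-average conventions over Galois orbits). Since we only consider restrictions to $\cA_{x_0}$, and a full $\Gal(F^{\unr}/F)$-orbit in $\Phi_{\aff}(G, T, F^{\unr})$ restricts uniformly to a single functional there, these normalization subtleties should not affect the set-theoretic equality to be proven, but a careful reference to \cite{MR327923} is needed to confirm that each non-constant restriction actually arises from both sides.
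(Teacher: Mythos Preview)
Your overall strategy---rewriting both sides as the set of non-constant restrictions of affine roots and then invoking the Bruhat--Tits description of $F$-rational affine roots as restrictions of $F^{\unr}$-rational ones---is exactly the approach taken in the paper. The precise reference for the latter fact is \cite[Proposition~9.4.28]{KalethaPrasad}, which gives the statement you want without any normalization ambiguity.

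However, there is a genuine gap in the first step. You propose to arrange $S \subseteq T$ after conjugating $S$. This is not possible in general: recall from the paragraph preceding the lemma that $T$ is an \emph{elliptic} maximal torus of $M$ (because $\sfT^{\circ}$ is elliptic in $\sfM^{\circ}_{x_0}$ and $[x_0]_M$ is a vertex). Ellipticity means that the maximal $F$-split subtorus of $T$ is exactly $A_M$. Since $S$ is a maximal $F$-split torus of $M$, the containment $S \subseteq T$ would force $S = A_M$, which fails whenever $M$ has nontrivial relative roots over $F$.

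The paper handles this by introducing an auxiliary maximal $F^{\unr}$-split torus $T'$ of $M$ that \emph{does} contain such an $S$ (this exists by \cite[Proposition~9.3.4]{KalethaPrasad}), carrying out your restriction argument with $T'$ in place of $T$, and then passing from $T'$ back to $T$ via a conjugation by some $g \in M(F^{\unr})_{x_0,0}$. The key point for this last step is that $M(F^{\unr})_{x_0,0}$ acts trivially on $\cA_{x_0}$, so the set of restrictions to $\cA_{x_0}$ is unchanged. Once you insert this auxiliary torus and the conjugation step, your argument is complete and coincides with the paper's.
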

\begin{proof}
Let $T'$ be a maximal $F^{\unr}$-split torus of $M$ that contains a maximal $F$-split torus $S$ of $M$  such that $x_{0} \in \cA(M, S, F)$.
Such a torus exists by \cite[Proposition~9.3.4]{KalethaPrasad}.
According to \cite[Proposition~9.4.28]{KalethaPrasad}, we have
\[
\Phi_{\aff}(G, S, F) = \left\{
a \restriction_{\cA(G, S, F)} \mid a \in \Phi_{\aff}(G, T', F^{\unr}), \, \text{$a \restriction_{\cA(G, S, F)}$ is non-constant}
\right\}.
\]
Thus, we obtain that
\begin{align*}
\Phi_{\aff}(G, A_{M}) &= \left\{
a \restriction_{\cA_{x_{0}}} \mid a \in \Phi_{\aff}(G, T', F^{\unr}), \, \text{$a \restriction_{\cA_{x_{0}}}$ is non-constant}
\right\} \\
&= \left\{
a \restriction_{\cA_{x_{0}}} \mid a \in \Phi_{\aff}(G, T', F^{\unr}) \smallsetminus \Phi_{\aff}(M, T', F^{\unr})
\right\}.
\end{align*}
Since $T$ and $T'$ are maximal $F^{\unr}$-split tori of $M$ such that $x_{0} \in \cA(M, T, F^{\unr}) \cap \cA(M, T', F^{\unr})$, there exists $g \in M(F^{\unr})_{x_{0}, 0}$ such that $g T g^{-1} = T'$.
Then by noting that $M(F^{\unr})_{x_{0}, 0}$ acts on $\cA_{x_{0}}$ $(\subseteq \cB(M, F) \subseteq \cB(M, F^{\unr}))$ trivially, the lemma follows from the same argument as in the proof of \cite[Lemma~5.2.1]{2024arXiv240807801A}.
\end{proof}

For $a \in \Phi_{\aff}(G, A_{M})$, let $H_{a}$ denote the affine hyperplane in $\cA_{x_0}$ defined by $H_{a} = \left\{
x \in \cA_{x_0} \mid a(x) = 0
\right\}$.
We define
$
\mathfrak{H} = \left\{
H_{a} \mid a \in \Phi_{\aff}(G, A_{M})
\right\}
$.
We note that 
$
\cA_{\gen} = \cA_{x_0} \smallsetminus
\left(
\bigcup_{H \in \mathfrak{H}} H
\right)
$.
For $x, y \in \cA_{\gen}$, we define the subset $\mathfrak{H}_{x, y}$ of $\mathfrak{H}$ by $\mathfrak{H}_{x, y} = \left\{
H \in \mathfrak{H} \mid \text{$x$ and $y$ are on opposite sides of $H$}
\right\}$.
Let $\mathfrak{H}_{\Krel}$ denote the subset of $\mathfrak{H}$ consisting of the affine hyperplanes that are $\cK$-relevant in the sense of \cite[Definition~3.5.6]{2024arXiv240807801A}, that is, we have $\Theta_{x \mid y} \circ \Theta_{y \mid x}  \not \in \Coeff \cdot \id_{\ind_{K_{x}}^{G(F)}(\rho_{x})}$ for some $x, y \in \cA_{\gen}$ such that $\mathfrak{H}_{x, y} = \{H\}$, where 
$
\Theta_{y \mid x} \colon \ind_{K_{x}}^{G(F)} (\rho_{x}) \rightarrow \ind_{K_{y}}^{G(F)} (\rho_{y})
$
and
$
\Theta_{x \mid y} \colon \ind_{K_{y}}^{G(F)} (\rho_{y}) \rightarrow \ind_{K_{x}}^{G(F)} (\rho_{x})
$
are the intertwining operators defined in \cite[Section~3.5]{2024arXiv240807801A}.
According to \cite[Corollalry~3.5.20]{2024arXiv240807801A}, the set $\mathfrak{H}_{\Krel}$ is preserved by the action of $\Nheart$.
For $H \in \mathfrak{H}$, let $s_{H}$ denote the orthogonal reflection on $\cA_{x_0}$ with respect to $H$.
We define the subgroup $W_{\Krel}$ of the group of affine transformations of $\cA_{x_0}$ by
$W_{\Krel} = \left \langle
s_{H} \mid H \in \mathfrak{H}_{\Krel}
\right \rangle$.
According to \cite[Proposition~5.3.5(1)]{2024arXiv240807801A}, there exists a normal subgroup $\Waff$ of $\Wheart$ such that the action of $\Wheart$ on $\cA_{x_{0}}$ restricts to an isomorphism $\Waff \isoarrow W_{\Krel}$.
We identify $\Waff$ with $W_{\Krel}$ via this isomorphism.
We define a quotient affine space $\cA_{\Krel}$ of $\cA_{x_{0}}$ by 
\[
\cA_{\Krel} = \cA_{x_{0}}/\left(
\bigcap_{H \in \mathfrak{H}_{\Krel}} \ker(D_{A_{M}}(a_{H}))
\right),
\]
where $a_{H}$ denotes an element in $\Phi_{\aff}(G, A_{M})$ such that $H_{a_{H}} = H$, and $D_{A_{M}}(a_{H})$ denotes its gradient.
We identify an affine hyperplane $H \in \mathfrak{H}_{\Krel}$ with its image on $\cA_{\Krel}$.
According to \cite[Proposition~3.7.4]{2024arXiv240807801A}, the group $\Waff$ is the affine Weyl group of an affine root system on the affine space $\cA_{\Krel}$ whose vanishing affine hyperplanes are exactly $\mathfrak{H}_{\Krel}$.
Let $C_{\Krel}$ be the chamber of $\cA_{\Krel}$ with respect to $\mathfrak{H}_{\Krel}$ that contains the image of $x_{0}$ and let $S_{\Krel} \subseteq W(\rho_{M})_{\aff}$ be the subset of simple reflections corresponding to $C_{\Krel}$.
According to \cite[Proposition~3.7.6]{2024arXiv240807801A}, we have $\Wheart = \Wzero \ltimes \Waff$, where $\Wzero$ is the stabilizer of $C_{\Krel}$ in $\Wheart$.

\begin{theorem}[{\cite[Theorem~5.3.6]{2024arXiv240807801A}}]
\label{thm:descriptionofhecke}
We have an isomorphism of $\Coeff$-algebras 
\[
\cH(G(F), (K_{x_0}, \rho_{x_0})) \simeq \Coeff[\Wzero, \mu] \ltimes \cH_{\Coeff}(\Waff, q)
\]
that preserves the anti-involutions on each algebra defined in \cite[Section~3.11]{2024arXiv240807801A}, where $\Coeff[\Wzero, \mu]$ denotes the group algebra of $\Wzero$ twisted by a 2-cocycle $\mu$ on $\Wzero$ introduced in \cite[Notation~3.6.1]{2024arXiv240807801A}, the algebra $\cH_{\Coeff}(\Waff, q)$ is the affine Hecke algebra with $\Coeff$-coefficients attached to the Coxeter system $(\Waff, S_{\Krel})$ and a parameter function $q \colon s \mapsto q_s$ on $S_{\Krel}$, and the meaning of their semi-direct product is explained in \cite[Notation~3.10.8]{2024arXiv240807801A}.
\end{theorem}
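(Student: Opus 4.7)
The plan is to realize $\cH(G(F),(K_{x_0},\rho_{x_0}))$ as a free $\Coeff$-module with an explicit basis indexed by $\Wheart$, and then verify that the defining relations of the semi-direct product $\Coeff[\Wzero,\mu]\ltimes\cH_{\Coeff}(\Waff,q)$ hold for this basis. First I would determine the support: a standard Mackey/intertwining analysis, using the cuspidality of $\rho_M$ and the depth-zero structure, forces any $g\in G(F)$ contributing to $\cH(G(F),(K_{x_0},\rho_{x_0}))$ to normalize $M$ up to $K_{x_0}$, to stabilize $[x_0]_M$, and to normalize the isomorphism class of $\rho_M$. This gives that the support is contained in $K_{x_0}\cdot \Nheart\cdot K_{x_0}$ and that each double coset $K_{x_0}\widetilde w K_{x_0}$ for $w\in \Wheart$ contributes a one-dimensional space to $\cH(G(F),(K_{x_0},\rho_{x_0}))$, yielding a basis $\{T_w\}_{w\in\Wheart}$ once one normalizes using the intertwining operators $\Theta_{y\mid x}$ of $\S 3.5$ of \cite{2024arXiv240807801A}.

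Next I would analyze the two factors separately. For the affine part: by \cite[Proposition~3.7.4]{2024arXiv240807801A} cited in the excerpt, $\Waff$ is the affine Weyl group of the hyperplane system $(\cA_{\Krel},\mathfrak H_{\Krel})$ with simple reflections $S_{\Krel}$. For a simple reflection $s\in S_{\Krel}$ corresponding to a $\cK$-relevant wall $H_s$, I would pick $x,y\in\cA_{\gen}$ on opposite sides of $H_s$ with $\mathfrak H_{x,y}=\{H_s\}$, and compute $\Theta_{x\mid y}\circ\Theta_{y\mid x}$. By choosing a point $z\in H_s$ and reducing to the finite reductive quotient $\sfG^\circ_z(\ff)$, this composition becomes a scalar tied to the $q$-parameter of the parabolically induced representation $R^{\sfG^\circ_z}_{\sfM^\circ_z}(\rho_{M,0})$, exactly the quantity studied in Section~\ref{section:finitegroupcase}. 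This gives the quadratic relation $T_s^2=(q_s-1)T_s+q_s$ with $q_s$ defined as in \cite[Proposition~3.9.1]{2024arXiv240807801A}. Braid relations then follow from the geometry of galleries in $\cA_{\Krel}$ together with the composition law for the intertwiners $\Theta$. For the non-affine part: $\Wzero=\Wheart/\Waff$ acts on $\cA_{\Krel}$ preserving $C_{\Krel}$, hence on $(\Waff,S_{\Krel})$ by diagram automorphisms. The elements $T_\omega$ for $\omega\in\Wzero$ multiply according to a $2$-cocycle $\mu$ which arises from the obstruction to lifting $\Wzero$ into $\Nheart$ together with the scalar ambiguities in the chosen normalizations of the $\Theta_{y\mid x}$; the cross-relation $T_\omega T_s T_\omega^{-1}=T_{\omega s\omega^{-1}}$ is then a consequence of $\Wzero$-equivariance of the intertwiners and yields the full semi-direct product structure.

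The main obstacle is the control of the scalar $\Theta_{x\mid y}\circ\Theta_{y\mid x}$: one needs it to be a nontrivial scalar precisely when $H_s$ is $\cK$-relevant (so that $\Waff$ actually acts), and one needs to identify the scalar with a finite-reductive-group $q$-parameter in a way that is independent of the auxiliary point $z\in H_s$ and of the choice of normalization. This requires a careful reduction to parabolic induction on $\sfG^\circ_z(\ff)$ and a clean comparison with \cite[Theorem~3.18~(ii)]{MR570873}. Once this is in place, compatibility with the anti-involutions is essentially formal: both sides carry canonical $*$-structures with $T_w^*=T_{w^{-1}}$, and the isomorphism was constructed to send basis element to basis element.
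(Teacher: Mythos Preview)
This theorem is not proved in the present paper at all: it is stated in \S\ref{subsec:reviewofAFMO} as a direct citation of \cite[Theorem~5.3.6]{2024arXiv240807801A}, with no proof or sketch given here. The section where it appears is explicitly a review of results from \cite{2024arXiv240807801A}, and the theorem serves only as input for the new results of Section~\ref{section:heckealgebraisom}. So there is no ``paper's own proof'' to compare your proposal against.

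That said, your outline is a reasonable summary of the strategy carried out in \cite{2024arXiv240807801A}: identify the support of the Hecke algebra via \cite[Proposition~3.4.18]{2024arXiv240807801A}, build a basis $\{T_w\}_{w\in\Wheart}$ using the normalized intertwining operators $\Theta_{y\mid x}$, establish the quadratic relations for simple reflections via reduction to finite reductive quotients and \cite[Theorem~3.18(ii)]{MR570873}, obtain braid relations from gallery arguments, and handle the $\Wzero$-part via the $2$-cocycle $\mu$. One point to be careful about: your claim that the anti-involution compatibility is ``essentially formal'' with $T_w^*=T_{w^{-1}}$ understates the work---in \cite[Section~3.11]{2024arXiv240807801A} the anti-involution on the twisted group algebra side involves the cocycle $\mu$ nontrivially, and checking compatibility requires tracking how $\mu$ interacts with inversion. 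But since the present paper simply imports the result, none of this is at issue here.
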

The parameter function $q$ is calculated as follows.
\begin{proposition}[{\cite[Theorem~3.18 (ii)]{MR570873}, \cite[Proposition~3.9.1]{2024arXiv240807801A}}]
\label{calculationofqs}
Let $s \in S_{\Krel}$ and $H_{s}$ be the corresponding affine hyperplane in $\cA_{x_{0}}$.
 Let $x, y \in \cA_{\gen}$ such that $\mathfrak{H}_{x, y} = \{H_{s}\}$ and let $h \in H_{s}$ denote the unique point for which $h = x + t \cdot (y - x)$ for some $0 < t < 1$.
 Then we have $q_{s} = q\left(
\ind_{K_{x}}^{K_{h}} (\rho_{x})
\right)$.
\end{proposition}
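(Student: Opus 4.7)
The plan is to reduce the computation of the Hecke algebra parameter $q_s$ to a calculation of a $q$-parameter of a finite-dimensional induced representation, following the strategy of Howlett--Lehrer and Morris.

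First I would analyze the local picture at the wall $H_s$. Since $\mathfrak{H}_{x,y} = \{H_s\}$ and $h$ lies on $H_s$ strictly between $x$ and $y$, both $x$ and $y$ lie in the closure of a facet of $\cA_{x_0}$ whose associated parahoric is $G(F)_{h,0}$. This yields inclusions $K_x \subseteq K_h$ and $K_y \subseteq K_h$ of compact open subgroups, and a common pro-$p$ normal subgroup $K_{h,+}$. After passing to the quotient, the images $\overline{K_x}, \overline{K_y} \subseteq \overline{K_h}$ are (the $\ff$-points of) opposite maximal parabolic subgroups with a common Levi factor into which $K_M$ projects, and $\rho_x$ descends to a representation whose restriction to the Levi contains the cuspidal representation $\rho_{M,0}$. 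Thus $\ind_{K_x}^{K_h}(\rho_x)$ is identified with a parabolic induction of a cuspidal representation in the finite reductive quotient at $h$, so in particular it has length at most two.

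Second I would relate this finite-group picture to the affine Hecke algebra parameter $q_s$. By the construction of the isomorphism in Theorem~\ref{thm:descriptionofhecke}, the generator $T_s$ is built from suitably normalized intertwining operators $\Theta_{y \mid x}$ and $\Theta_{x \mid y}$, and the quadratic relation $(T_s - q_s)(T_s + 1) = 0$ encodes the spectrum of the composition $\Theta_{x \mid y} \circ \Theta_{y \mid x}$ acting on $\ind_{K_x}^{G(F)}(\rho_x)$. By Frobenius reciprocity and Mackey theory, this composition is controlled by the endomorphism algebra of $\ind_{K_x}^{K_h}(\rho_x)$ regarded as a $K_h$-representation. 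Concretely, the two eigenvalues $q_s$ and $-1$ of $T_s$ correspond, via this local picture, to the two isotypic components of $\ind_{K_x}^{K_h}(\rho_x)$ in the length-two case (or to a single eigenvalue in the irreducible case, where $q_s = 1$).

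Third, I would invoke [MR570873, Theorem~3.18~(ii)] of Howlett and Lehrer (extended by Morris to the possibly disconnected parahoric setting): the endomorphism algebra of the parabolic induction of a cuspidal representation in a finite reductive group is generated by elements whose quadratic relations have parameters equal to the ratios of dimensions of the irreducible constituents of the induced representation. In the length-two case this ratio is by definition $q\bigl(\ind_{K_x}^{K_h}(\rho_x)\bigr)$, while in the irreducible case the parameter is $1$, which also matches the definition of $q(\cdot)$. Combined with the identification from the previous step, this yields $q_s = q\bigl(\ind_{K_x}^{K_h}(\rho_x)\bigr)$.

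The main technical obstacle is the normalization of the intertwining operators: one must carefully track the scalars implicit in the choice of Haar measures and in the isomorphism of Theorem~\ref{thm:descriptionofhecke} to ensure that $q_s$ itself (rather than $q_s^{1/2}$, $q_s^{-1}$, or a twist by a root of unity) arises from the dimension ratio. This compatibility is precisely what was verified in detail in [Proposition~3.9.1]~\cite{2024arXiv240807801A}, building on Morris's extension of Howlett--Lehrer to the setting where $\sfG_x$ need not be connected.
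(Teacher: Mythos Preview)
The paper does not give its own proof of this proposition; it is simply cited from \cite[Theorem~3.18~(ii)]{MR570873} and \cite[Proposition~3.9.1]{2024arXiv240807801A}. Your sketch is essentially a correct outline of the argument found in those references: reduce to the finite reductive quotient at $h$, identify $\ind_{K_x}^{K_h}(\rho_x)$ with a parabolic induction of a cuspidal representation there, and read off the quadratic-relation parameter from Howlett--Lehrer's theorem (as adapted to the possibly disconnected quotient in \cite{2024arXiv240807801A}).

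One minor geometric slip: you write that ``both $x$ and $y$ lie in the closure of a facet of $\cA_{x_0}$ whose associated parahoric is $G(F)_{h,0}$,'' but it is the other way around --- $h$ lies in the closure of the facets containing $x$ and $y$ (equivalently, $h$ is more special), which is what gives the inclusions $G(F)_{x,0} \subseteq G(F)_{h,0}$ and hence $K_x \subseteq K_h$. This does not affect the correctness of your overall argument.
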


The main result Theorem~\ref{mainthm:heckealgebraisom} of this paper is that the affine Hecke algebra $\cH_{\Coeff}(\Waff, q)$ is isomorphic to the affine Hecke algebra $\cH_{\Coeff}(W(u_{\rho_{M}})_{\aff}, q_{\theta})$ appearing in the description \cite[Theorem~5.3.6]{2024arXiv240807801A} of the Hecke algebra $\cH\left(
G_{\theta}(F), (K_{\theta, x_{0}}, u_{x_{0}})
\right)$ attached to a unipotent type $(K_{\theta, x_{0}}, u_{x_{0}})$ for a connected reductive group $G_{\theta}$ that splits over $F^{\unr}$, which we will define in the following subsection. 

In the end of this subsection, we give a criterion of $\cK$-relevance that will be used in the proof of Theorem~\ref{mainthm:heckealgebraisom}.
\begin{lemma}
\label{lemma:criterionofrelevance}
Let $H \in \mathfrak{H}$.
Let $x, y \in \cA_{\gen}$ such that $\mathfrak{H}_{x, y} = \{H\}$ and let $h \in H$ denote the unique point for which $h = x + t \cdot (y - x)$ for some $0 < t < 1$.
\begin{enumerate}[(1)]
\item
If there exists an element $n_{H} \in \Nheart \cap K_{h}$ such that the action of $n_{H}$ on $\cA_{x_0}$ agrees with the orthogonal reflection $s_{H}$, then the representation $\ind_{K_{x}}^{K_{h}} (\rho_{x})$ has length two.
Otherwise, the representation $\ind_{K_{x}}^{K_{h}} (\rho_{x})$ is irreducible.
\item
\label{criterionofKrelevance}
The affine hyperplane $H$ is $\cK$-relevant if and only if $q\left(
\ind_{K_{x}}^{K_{h}} (\rho_{x})
\right) > 1$, that is,
the representation $\ind_{K_{x}}^{K_{h}} (\rho_{x})$ has length two and decomposes as $\ind_{K_{x}}^{K_{h}} (\rho_{x}) = \rho_{1} \oplus \rho_{2}$ with $\dim(\rho_{1}) \neq \dim(\rho_{2})$.
\end{enumerate}
\end{lemma}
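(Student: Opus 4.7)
The plan is to reduce both parts to a parabolic-induction problem in a finite reductive group by descending modulo the pro-$p$ radical $K_{h,+} := G(F)_{h,0+}$. Since $h$ lies in the closure of the facet of $x$, one has $G(F)_{x,0} \subseteq G(F)_{h,0}$ and $K_{h,+} \subseteq K_{x,+}$, so the depth-zero representation $\rho_x$, being trivial on $K_{x,+}$, descends to an irreducible representation $\bar\rho_x$ of $\bar K_x := K_x/K_{h,+}$, giving $\ind_{K_x}^{K_h}(\rho_x) \simeq \ind_{\bar K_x}^{\sfH}(\bar\rho_x)$ with $\sfH := K_h/K_{h,+}$. The group $\sfH$ is a (possibly disconnected) finite group with identity component $\sfG^\circ_h(\ff)$; inside $\sfH$, the subgroup $\bar K_x$ is a ``parabolic-type'' subgroup whose reductive Levi quotient is isomorphic to $\sfM^\circ_{x_0}(\ff)$, and $\bar\rho_x$ factors through this quotient as the cuspidal representation $\rho_{M,0}$. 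Since $h$ lies on exactly one wall $H \in \mathfrak{H}$ separating $x$ from $y$, this Levi is a maximal Levi-type subgroup.

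For part (1), apply the Mackey formula to the induction $\ind_{\bar K_x}^{\sfH}(\bar\rho_x)$. Standard Harish-Chandra theory for parabolic induction of a cuspidal representation from a maximal Levi (extended to the possibly disconnected $\sfH$ via Clifford theory, or reduced to the connected case via Proposition~\ref{prop:comparisonofqparameters}) yields length at most two, with length exactly two iff the non-trivial element of the associated ``relative Weyl group'' (of order $1$ or $2$) fixes $\rho_{M,0}$ up to isomorphism. Such a lift in $\sfH$ corresponds to an element $n_H \in \Nheart \cap K_h$ whose induced action on $\cA_{x_0}$ is the reflection $s_H$: elements of $K_h$ fix $h$, and the only non-trivial isometry of $\cA_{x_0}$ that fixes $h$ and preserves the apartment structure (required for it to come from a normalizer of the Levi) is $s_H$.

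For part (2), the operator $\Theta_{x|y} \circ \Theta_{y|x}$ is a $G(F)$-equivariant endomorphism of $\ind_{K_x}^{G(F)}(\rho_x) = \ind_{K_h}^{G(F)}(\ind_{K_x}^{K_h}(\rho_x))$. A careful analysis of the intertwiners constructed in \cite[Section~3.5]{2024arXiv240807801A} shows that in the length-one case the composition is a scalar (corresponding to $q = 1$ and $H$ not being $\cK$-relevant); in the length-two case $\ind_{K_x}^{K_h}(\rho_x) = \rho_1 \oplus \rho_2$ (with $\rho_1 \not\simeq \rho_2$ automatic by Mackey), the operator acts by a scalar $c_i$ on each summand $\ind_{K_h}^{G(F)}(\rho_i)$, and the ratio $c_1/c_2$ is determined by the ratio $\dim(\rho_1)/\dim(\rho_2) = q(\ind_{K_x}^{K_h}(\rho_x))$. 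Hence $\Theta_{x|y} \circ \Theta_{y|x}$ is a scalar iff $c_1 = c_2$ iff $q = 1$, giving the equivalence in part (2).

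The main technical obstacle is the computation of the eigenvalues $c_i$ and their relation to $\dim(\rho_i)$ in the length-two case, which requires careful tracking of the normalizing constants in the construction of the intertwining operators from \cite[Section~3.5]{2024arXiv240807801A}; the remaining steps are routine descent combined with standard Harish-Chandra and Mackey theory in the finite reductive quotient.
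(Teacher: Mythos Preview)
Your approach to Part~(1) via descent to the finite quotient and Mackey theory is correct and essentially equivalent to the paper's argument, though the paper is more direct: it invokes \cite[(2.2.6), Proposition~3.4.18]{2024arXiv240807801A} to obtain
\[
\dim_{\Coeff} \End_{K_h}\bigl(\ind_{K_x}^{K_h}(\rho_x)\bigr) = \bigl|(\Nheart \cap K_h)/K_M\bigr|
\]
in one step, and then observes (as in the proof of \cite[Proposition~5.3.5]{2024arXiv240807801A}) that this quotient acts properly on $\cA_{x_0}$ with every non-trivial element acting as $s_H$, so its order is $1$ or $2$.

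For Part~(2), you have correctly located the crux but not resolved it: the assertion that the eigenvalue ratio $c_1/c_2$ is determined by the dimension ratio $q$ is the entire content of the result, and ``careful analysis of the intertwiners'' is not a proof. The paper treats the two implications by quite different mechanisms, and your sketch conflates them.

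For the backward implication ($q > 1 \Rightarrow H$ is $\cK$-relevant), the paper invokes \cite[Lemma~3.9.2]{2024arXiv240807801A} to produce $\Phi_h \in \End_{K_h}\bigl(\ind_{K_x}^{K_h}(\rho_x)\bigr)$ supported on $K_x s_H K_x$ and satisfying the quadratic relation $\Phi_h^2 = (q-1)\Phi_h + q\cdot\id$. After identifying $\Phi_h$ up to scalar with the standard operator $\Phi_{x,s_H}$ (\cite[Proposition~3.4.18, Lemma~3.5.26]{2024arXiv240807801A}) and applying \cite[Lemma~3.8.9]{2024arXiv240807801A}, one gets $\Theta_{x|y}\circ\Theta_{y|x} = c'\bigl((q-1)\Phi_h + q\cdot\id\bigr)$, which is non-scalar because $\Phi_h$ has support off $K_x$ and $q \neq 1$. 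This quadratic relation is precisely the missing ingredient in your eigenvalue computation.

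For the forward implication ($H$ $\cK$-relevant $\Rightarrow q > 1$), the paper does \emph{not} run the eigenvalue argument in reverse. Instead it argues structurally: $\cK$-relevance forces $s_H \in \Waff$, so by \cite[Chapter~V, \S3.1, Lemma~2]{MR0240238} some $\Waff$-conjugate $s_{w(H)}$ lies in $S_{\Krel}$. For simple reflections the inequality $q_s > 1$ is already established in \cite[Proposition~3.8.23, Proposition~3.9.1]{2024arXiv240807801A}, and since both $\cK$-relevance and $q\bigl(\ind_{K_x}^{K_h}(\rho_x)\bigr)$ are preserved by the $\Wheart$-action, the general case follows. Your outline does not supply this reduction, and without an analogue of the quadratic relation valid also in the $q = 1$ case it is not clear that a direct eigenvalue analysis alone would close this direction.
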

\begin{proof}

According to \cite[(2.2.6)]{2024arXiv240807801A} and \cite[Proposition~3.4.18]{2024arXiv240807801A}, we have
\[
\dim_{\Coeff}\left(
\End_{K_{h}}\left(
\ind_{K_{x}}^{K_{h}} (\rho_{x})
\right)
\right)
=
\abs{
\left\{
\left(
\Nheart \cap K_{h}
\right)/K_{M}
\right\}
}.
\]
The same argument as in the proof of \cite[Proposition~5.3.5]{2024arXiv240807801A} implies that the group $\left(
\Nheart \cap K_{h}
\right)/K_{M}$ acts on $\cA_{x_{0}}$ properly, and the action of any non-trivial element in $\left(
\Nheart \cap K_{h}
\right)/K_{M}$ on $\cA_{x_0}$ agrees with the orthogonal reflection $s_{H}$.
Thus, we obtain the first claim.

We will prove the second claim. 
Suppose that $H$ is $\cK$-relevant.
Then we have $s_{H} \in \Waff \subseteq \Wheart$.
Hence, \cite[Proposition~5.3.5(1)]{2024arXiv240807801A} and the first claim of the lemma imply that the representation $\ind_{K_{x}}^{K_{h}} (\rho_{x})$ decomposes into a direct sum of irreducible representations $\rho_{1}$ and $\rho_{2}$ of $K_{h}$.
We will prove that $\dim(\rho_{1}) \neq \dim(\rho_{2})$.
If $s_{H} \in S_{\Krel}$, the claim follows from \cite[Proposition~3.8.23, Proposition~3.9.1]{2024arXiv240807801A}.
For general $H$, according to \cite[Chapter~V, Section~3.1, Lemma~2]{MR0240238}, there exists $w \in \Waff$ such that $s_{w(H)} \in S_{\Krel}$.
Since the conditions in \eqref{criterionofKrelevance} are preserved by the action of $\Wheart$, we obtain the claim.

Conversely, suppose that the representation $\ind_{K_{x}}^{K_{h}} (\rho_{x})$ decomposes as $\ind_{K_{x}}^{K_{h}} (\rho_{x}) = \rho_{1} \oplus \rho_{2}$ with $\dim(\rho_{1}) \neq \dim(\rho_{2})$.
According to \cite[Lemma~3.9.2]{2024arXiv240807801A}, there exists $\Phi_{h} \in \End_{K_{h}}\left(
\ind_{K_{x}}^{K_{h}} (\rho_{x})
\right) \simeq \cH(K_{h}, (K_{x}, \rho_{x}))$ with support $K_{x} s_{H} K_{x}$ such that 
\[
\left(
\Phi_{h}
\right)^{2} = (q-1) \cdot \Phi_{h} + q \cdot \id_{\ind_{K_{x}}^{K_{h}} (\rho_{x})},
\]
where $q = \frac{\dim(\rho_1)}{\dim(\rho_2)} \neq 1$.
We regard $\Phi_{h}$ as an element of $\End_{G(F)}\left(
\ind_{K_{x}}^{G(F)} (\rho_{x})
\right)$ by \cite[(2.2.7)]{2024arXiv240807801A}.
According to \cite[Proposition~3.4.18, Lemma~3.5.26]{2024arXiv240807801A}, there exists $c \in \Coeff^{\times}$ such that $\Phi_{h} = c \cdot \Phi_{x, s_{H}}$, where $\Phi_{x, s_{H}}$ is the element of $\End_{G(F)}\left(
\ind_{K_{x}}^{G(F)} (\rho_{x})
\right)$ defined in \cite[Definition~3.5.23]{2024arXiv240807801A}.
According to \cite[Lemma~3.8.9]{2024arXiv240807801A}, which is stated for $H \in \mathfrak{H}_{\Krel}$ but the same proof also works for general $H$, we have 
\[
\Theta_{x \mid y} \circ \Theta_{y \mid x} = c' \cdot \left(
\Phi_{h}
\right)^{2}
= c' \left(
(q-1) \cdot \Phi_{h} + q \cdot \id_{\ind_{K_{x}}^{G(F)} (\rho_{x})}
\right)
\]
 for some $c' \in \Coeff^{\times}$.
Since $q \neq 1$, we obtain that $\Theta_{x \mid y} \circ \Theta_{y \mid x}  \not \in \Coeff \cdot \id_{\ind_{K_{x}}^{G(F)}(\rho_{x})}$.
Thus, the hyperplane $H$ is $\cK$-relevant.
\end{proof}

\subsection{The definition of $G_{\theta}$ and $u_{x_{0}}$}
\label{subsec:defofGthetaandu}

In this subsection, we define the connected reductive group $G_{\theta}$ and the unipotent type $(K_{\theta, x_{0}}, u_{x_{0}})$ for $G_{\theta}$.
After that we will apply \cite[Theorem~5.3.6]{2024arXiv240807801A} to $(K_{\theta, x_{0}}, u_{x_{0}})$ and obtain the description of the corresponding Hecke algebra $\cH\left(
G_{\theta}(F), (K_{\theta, x_{0}}, u_{x_{0}})
\right)$.
In the following subsection, we will prove that the affine Hecke algebras appearing in the descriptions of $\cH(G(F), (K_{x_0}, \rho_{x_0}))$ and $\cH\left(
G_{\theta}(F), (K_{\theta, x_{0}}, u_{x_{0}})
\right)$ are isomorphic.

In this and the following subsections, we write $ \Phi(G, T, F^{\unr}) = \Phi(G, T)$ and $\Phi_{\aff}(G, T, F^{\unr}) = \Phi_{\aff}(G, T)$ for simplicity.
We define a $\Gal(F^{\unr}/F)$-stable subset $\Phi(G, T)_{\theta}$ of $\Phi(G, T)$ by
\[
\Phi(G, T)_{\theta} = \left\{
\alpha \in \Phi(G, T) \mid \theta \circ N_{F' / F} \circ \alpha^{\vee} \restriction_{\cO_{F'}^{\times}} = 1
\right\},
\]
where $F'$ is a finite unramified extension of $F$ such that $T$ splits over $F'$, and $N_{F'/F}$ denotes the norm map $T(F') \rightarrow T(F)$.
We also define 
\[
\Phi_{\aff}(G, T)_{\theta} = \left\{
a \in \Phi_{\aff}(G, T) \mid D_{T}(a) \in \Phi(G, T)_{\theta}
\right\}.
\]
We define the subspace $V^{\theta}$ of $X_{*}(T) \otimes_{\bZ} \bR$ by
$
V^{\theta} = \bigcap_{\alpha \in \Phi(G, T)_{\theta}} \ker(\alpha)
$ and define the affine space $\cA_{\theta}$ by $\cA_{\theta} = \cA(G, T, F^{\unr})/V^{\theta}$.
We write its vector space of translations as $V_{\theta} = \left(
X_{*}(T) \otimes_{\bZ} \bR
\right)/V^{\theta}$.
Then $\Phi(G, T)_{\theta}$ is a root system in $V_{\theta}$ and $\Phi_{\aff}(G, T)_{\theta}$ is an affine root system on $\cA_{\theta}$.

To prove the Hecke algebra isomorphism Theorem~\ref{mainthm:heckealgebraisom}, we want to define $G_{\theta}$ to be a connected reductive group with maximal torus $T$ and affine root system $\Phi_{\aff}(G, T)_{\theta}$.
When $G$ splits over an unramified extension of $F$, we can define such a group $G_{\theta}$.
However, if $G$ is ramified, it can happen that the affine root system $\Phi_{\aff}(G, T)_{\theta}$ does not agree with the affine root system of any connected reductive group with maximal torus $T$.
To resolve this problem, we will ``normalize'' $\Phi(G, T)_{\theta}$ and $\Phi_{\aff}(G, T)_{\theta}$ and define a reduced root system $\Phi(G, T)^{\normal}_{\theta}$ in $V_{\theta}$ and an affine root system $\Phi_{\aff}(G, T)^{\normal}_{\theta}$ on $\cA_{\theta}$ following \cite[Chapter~VI, Section~2.5, Proposition~8]{MR0240238}.
We fix a special point $x_{s}$ in $\cA_{\theta}$ for the affine root system $\Phi_{\aff}(G, T)_{\theta}$ and write 
\[
\Phi_{\aff}(G, T)_{\theta, x_{s}} = \left\{
a \in \Phi_{\aff}(G, T)_{\theta} \mid a(x_{s}) = 0
\right\}.
\]
For $a \in \Phi_{\aff}(G, T)_{\theta, x_{s}}$, let $r_{a} > 0$ be the minimal positive number such that 
\[
\left\{
a + r_{a}, 2(a + r_{a}), \frac{a + r_{a}}{2}
\right\} \cap \Phi_{\aff}(G, T)_{\theta} \neq \emptyset.
\]
We define 
\[
\Phi(G, T)^{\normal}_{\theta} = \left\{
D_{T}(a)/r_{a} \mid a \in \Phi_{\aff}(G, T)_{\theta, x_{s}}
\right\}
\]
and
\[
\Phi_{\aff}(G, T)^{\normal}_{\theta} = \left\{
a/r_{a} + k \mid \alpha \in \Phi_{\aff}(G, T)_{\theta, x_{s}}, k \in \bZ
\right\}.
\]
We note that if $G$ splits over $F^{\unr}$, we have $r_{a} = 1$ for all $a \in \Phi_{\aff}(G, T)_{\theta, x_{s}}$, and we also have $\Phi(G, T)^{\normal}_{\theta} = \Phi(G, T)_{\theta}$ and $\Phi_{\aff}(G, T)^{\normal}_{\theta} = \Phi_{\aff}(G, T)_{\theta}$.
In general, according to \cite[Chapter~VI, Section~2.5, Proposition~8]{MR0240238}, the set $\Phi(G, T)^{\normal}_{\theta}$ is a reduced root system in $V_{\theta}$, and the set $\Phi_{\aff}(G, T)^{\normal}_{\theta}$ is an affine root system on $\cA_{\theta}$ that agrees with $\Phi_{\aff}(G, T)_{\theta}$ up to scalar multiples in the sense that for any $a \in \Phi_{\aff}(G, T)_{\theta}$, we have $\bQ a \cap \Phi_{\aff}^{\normal}(G, T)_{\theta} \neq \emptyset$ and vice versa.

We fix a basis $\Delta$ of $\Phi(G, T)^{\normal}_{\theta}$.
Let $W_{\theta}$ be the Weyl group of $\Phi(G, T)^{\normal}_{\theta}$ and $W^{\ext}_{\theta}$ be the extended affine Weyl group of $\Phi_{\aff}(G, T)^{\normal}_{\theta}$ in the sense of \cite[Definition~1.3.71]{KalethaPrasad}.
For $w \in W^{\ext}_{\theta}$, let $D(w) \in W_{\theta}$ be its derivative.

The following lemma is inspired by a discussion with David Schwein.
\begin{lemma}
Let $\gamma \in \Gal(F^{\unr}/F)$.
Then there exists a unique element $w_{\gamma} \in W^{\ext}_{\theta}$ such that $\gamma(\Delta) = D(w_{\gamma})(\Delta)$ and $\gamma(x_{s}) = w_{\gamma}(x_{s})$.
\end{lemma}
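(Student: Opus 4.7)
The plan is to establish uniqueness and existence separately. For uniqueness, suppose $w,w' \in W^{\ext}_{\theta}$ both satisfy the conclusions. Then $D(ww'^{-1}) \in W_{\theta}$ preserves $\Delta$ setwise, since both $D(w)$ and $D(w')$ send $\Delta$ to $\gamma(\Delta)$; by the simple transitivity of $W_{\theta}$ on the bases of $\Phi(G,T)^{\normal}_{\theta}$, we conclude $D(ww'^{-1}) = 1$. Hence $ww'^{-1}$ is a pure translation, and the condition $(ww'^{-1})(x_{s}) = x_{s}$ then forces $w = w'$.

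For existence, the first step is to verify that $\gamma$ induces an affine transformation of $\cA_{\theta}$ that preserves both $\Phi(G,T)^{\normal}_{\theta}$ (on the underlying vector space $V_{\theta}$) and $\Phi_{\aff}(G,T)^{\normal}_{\theta}$ (on $\cA_{\theta}$). Since $\Phi(G,T)_{\theta}$ and $\Phi_{\aff}(G,T)_{\theta}$ are $\Gal(F^{\unr}/F)$-stable and $V^{\theta}$ is Galois-stable, the action descends to $\cA_{\theta}$; moreover, the rescaling factor $r_{a}$ depends only on the intrinsic structure of the affine root system $\Phi_{\aff}(G,T)_{\theta}$, so $r_{\gamma \cdot a} = r_{a}$ and the normalization is $\Gal(F^{\unr}/F)$-equivariant.

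Next, since the linear part $D(\gamma)$ of $\gamma$ permutes $\Phi(G,T)^{\normal}_{\theta}$, the set $\gamma(\Delta)$ is again a basis, and by the simple transitivity of $W_{\theta}$ on bases there is a unique $w_{\gamma,0} \in W_{\theta}$ with $w_{\gamma,0}(\Delta) = \gamma(\Delta)$. Set $\sigma_{\gamma} := w_{\gamma,0}^{-1} D(\gamma)$, a diagram automorphism of $\Phi(G,T)^{\normal}_{\theta}$ fixing $\Delta$ setwise. Viewing $w_{\gamma,0}$ and $\sigma_{\gamma}$ as affine transformations of $\cA_{\theta}$ that fix $x_{s}$, we obtain the factorization
\[
\gamma = \tau \circ w_{\gamma,0} \circ \sigma_{\gamma},
\]
where $\tau$ denotes translation by $\gamma(x_{s}) - x_{s}$. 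Using the description $\Phi_{\aff}(G,T)^{\normal}_{\theta} = \{\alpha + k : \alpha \in \Phi(G,T)^{\normal}_{\theta},\ k \in \bZ\}$, with each $\alpha$ regarded as the affine function on $\cA_{\theta}$ vanishing at $x_{s}$, both $w_{\gamma,0}$ and $\sigma_{\gamma}$ preserve $\Phi_{\aff}(G,T)^{\normal}_{\theta}$ (they fix $x_{s}$ and permute $\Phi(G,T)^{\normal}_{\theta}$); combined with the fact that $\gamma$ preserves $\Phi_{\aff}(G,T)^{\normal}_{\theta}$, the factorization forces $\tau$ to preserve it as well. Hence $\gamma(x_{s}) - x_{s}$ lies in the translation lattice $\Lambda$ of $W^{\ext}_{\theta}$, which is the coweight lattice of $\Phi(G,T)^{\normal}_{\theta}$.

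Setting $w_{\gamma} := \tau \circ w_{\gamma,0} \in W_{\theta} \ltimes \Lambda = W^{\ext}_{\theta}$ then yields the desired element: $D(w_{\gamma}) = w_{\gamma,0}$ sends $\Delta$ to $\gamma(\Delta)$, and $w_{\gamma}(x_{s}) = \tau(x_{s}) = \gamma(x_{s})$. I expect the main technical point to be verifying the explicit description of $\Phi_{\aff}(G,T)^{\normal}_{\theta}$ used above in terms of $\Phi(G,T)^{\normal}_{\theta}$ and integer shifts relative to $x_{s}$, since it requires a careful inspection of how the normalization procedure interacts with the choice of the special point $x_{s}$; once this description is in hand, the factorization argument closes the proof.
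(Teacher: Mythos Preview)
Your uniqueness argument is correct and matches the paper's. For existence, your factorization approach is reasonable but contains a circular step that you underestimate.

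You claim that $\gamma$ preserves $\Phi_{\aff}(G,T)^{\normal}_{\theta}$ because ``$r_{\gamma\cdot a}=r_a$''. That equality is true, but it only shows that $\gamma$ sends the normalized system built from the special point $x_s$ to the normalized system built from $\gamma(x_s)$. To conclude these two normalized systems coincide you need $\alpha(\gamma(x_s)-x_s)\in\bZ$ for every $\alpha\in\Phi(G,T)^{\normal}_{\theta}$, i.e.\ exactly $\gamma(x_s)-x_s\in P^{\vee}$, which is what you are trying to prove. So the implication ``$r_{\gamma\cdot a}=r_a$, hence the normalization is Galois-equivariant'' is not yet justified, and the ``technical point'' you flag at the end is not the description $\Phi_{\aff}^{\normal}=\{\alpha+k\}$ (that is immediate from the definition) but rather this independence from the choice of special point.

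The gap can be closed, but doing so amounts to proving that the translation subgroup $T^{\ext}_{\theta}$ acts transitively on special points of $\Phi_{\aff}(G,T)^{\normal}_{\theta}$: since $\gamma$ preserves the hyperplanes of $\Phi_{\aff}(G,T)_{\theta}$, the point $\gamma(x_s)$ is special for $\Phi_{\aff}^{\normal}$, and for each $\alpha\in\Phi(G,T)^{\normal}_{\theta}$ some $\alpha+k$ must vanish at $\gamma(x_s)$, giving $\alpha(\gamma(x_s)-x_s)\in\bZ$. This is precisely \cite[Lemma~1.3.74]{KalethaPrasad}, which the paper invokes directly. The paper's proof is therefore shorter: it uses the splitting $W^{\ext}_{\theta}=W^{\ext}_{\theta,x_s}\ltimes T^{\ext}_{\theta}$ coming from $x_s$ being special, takes the unique $w_0\in W_{\theta}\simeq W^{\ext}_{\theta,x_s}$ with $w_0(\Delta)=\gamma(\Delta)$, and takes the unique $t\in T^{\ext}_{\theta}$ with $t(x_s)=\gamma(x_s)$ furnished by that lemma; then $w_{\gamma}=t\cdot w_0$ works. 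Your route reaches the same destination but re-derives the cited lemma along the way.
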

\begin{proof}
Let $W^{\ext}_{\theta, x_{s}}$ be the stabilizer of $x_{s}$ in $W^{\ext}_{\theta}$ and $T^{\ext}_{\theta} = W^{\ext}_{\theta} \cap V_{\theta}$ be the subgroup of translations in $W^{\ext}_{\theta}$.
Since $x_{s}$ is a special point, we have $W^{\ext}_{\theta} = W^{\ext}_{\theta, x_{s}} \ltimes T^{\ext}_{\theta}$, and
the map $w \mapsto D(w)$ restricts to an isomorphism $W^{\ext}_{\theta, x_{s}} \isoarrow W_{\theta}$.
Then the lemma follows from the facts that there exist a unique $w_{0} \in W_{\theta}$ such that $\gamma(\Delta) = w_{0}(\Delta)$ and a unique $t \in T^{\ext}_{\theta}$ such that $\gamma(x_{s}) = t(x_{s})$ by \cite[Lemma~1.3.74]{KalethaPrasad}.
\end{proof}

We define a subset $(\Phi(G, T)^{\normal}_{\theta})^{\vee}$ of $X_{*}(T) \otimes_{\bZ} \bQ$ by
\[
(\Phi(G, T)^{\normal}_{\theta})^{\vee} = \left\{
r_{a} D_{T}(a)^{\vee} \mid a \in \Phi_{\aff}(G, T)_{\theta, x_{s}}
\right\}
\]
and write
\[
X_{*}(T)_{\theta} = \langle X_{*}(T), (\Phi(G, T)^{\normal}_{\theta})^{\vee} \rangle \subseteq X_{*}(T) \otimes_{\bZ} \bQ.
\]
We also define a free abelian group $X^{*}(T)_{\theta} \subseteq X^{*}(T)$ by
\[
X^{*}(T)_{\theta} = \Hom_{\bZ}\left(
X_{*}(T)_{\theta}, \bZ
\right).
\]
Let $T_{\theta}$ be an $F^{\unr}$-split $F$-torus such that $X^{*}(T_{\theta}) = X^{*}(T)_{\theta}$ and let $i_{\theta} \colon T \to T_{\theta}$ be the isogeny corresponding to the inclusion $X^{*}(T)_{\theta} \subseteq X^{*}(T)$.
Then the isogeny $i_{\theta}$ induces an identification
\[
X_{*}(T) \subseteq X_{*}(T)_{\theta} = X_{*}(T_{\theta}).
\]
\begin{remark}
In what follows, we define the group $G_{\theta}$ using the torus $T_{\theta}$.
More generally, we may replace the torus $T_{\theta}$ with any $F^{\unr}$-split $F$-torus $T'_{\theta}$ equipped with an isogeny $i'_{\theta} \colon T \to T'_{\theta}$ such that
\[
\Phi(G, T)^{\normal}_{\theta} \subseteq X^{*}(T'_{\theta})
\qquad
\text{and}
\qquad
(\Phi(G, T)^{\normal}_{\theta})^{\vee} \subseteq X_{*}(T'_{\theta}),
\]
where we regard $X^{*}(T'_{\theta})$ (resp.\ $X_{*}(T'_{\theta})$) as a subset of $X^{*}(T) \otimes_{\bZ} \bQ$ (resp.\ $X_{*}(T) \otimes_{\bZ} \bQ$) via the isogeny $i'_{\theta}$.
\end{remark}
Let $G_{\theta, \qs}$ denote the connected reductive group that is defined and splits over $F^{\unr}$ with maximal torus $(T_{\theta})_{F^{\unr}}$ and root datum
\[
\Psi_{\theta} =
\left(
X^{*}(T_{\theta}), \Phi(G_{\theta, \qs}, T_{\theta}) := \Phi(G, T)^{\normal}_{\theta}, X_{*}(T_{\theta}), (\Phi(G, T)^{\normal}_{\theta})^{\vee}
\right).
\]
We fix a pinning $\{X_{\alpha}\}_{\alpha \in \Delta}$ of $G_{\theta, \qs}$ and let $o \in \cA(G_{\theta, \qs}, (T_{\theta})_{F^{\unr}}, F^{\unr})$ be the corresponding Chevalley valuation.
Then we obtain an isomorphism of affine spaces
\[
f \colon \cA^{\red}(G_{\theta, \qs}, (T_{\theta})_{F^{\unr}}, F^{\unr}) \isoarrow \cA_{\theta}
\]
defined by $f(o + v) = x_{s} + v$ for $v \in V_{\theta}$.
Under this identification, we have 
\[
\Phi_{\aff}(G_{\theta, \qs}, (T_{\theta})_{F^{\unr}}, F^{\unr}) = \Phi_{\aff}(G, T)^{\normal}_{\theta}.
\]
The map $\gamma \mapsto D(w_{\gamma})^{-1} \circ \gamma$ defines an action of $\Gal(F^{\unr}/F)$ on $\Psi_{\theta}$ that preserves $\Delta$.
This action induces an action of $\Gal(F^{\unr}/F)$ on $G_{\theta, \qs}$ preserving the pinning $\{X_{\alpha}\}_{\alpha \in \Delta}$, which gives an $F$-structure on $G_{\theta, \qs}$.
By using this $F$-structure, we regard $G_{\theta, \qs}$ as a quasi-split connected reductive group over $F$.
We note that the $F$-structure on $(T_{\theta})_{F^{\unr}}$ given by the $F$-structure on $G_{\theta, \qs}$ is different from the original $F$-structure on $T_{\theta}$.
To distinguish them, we write $T_{\qs}$ for this maximal $F$-torus of $G_{\theta, \qs}$.
The $F$-structures on $G_{\theta, \qs}$ and $T_{\qs}$ induce an action of $\Gal(F^{\unr}/F)$ on the reduced apartment $\cA^{\red}(G_{\theta, \qs}, T_{\qs}, F^{\unr})$.
We note that the isomorphism $f$ is not $\Gal(F^{\unr}/F)$-equivariant.
Instead, for $\gamma \in \Gal(F^{\unr}/F)$ and $x \in \cA^{\red}(G_{\theta, \qs}, T_{\qs}, F^{\unr})$, we have
\begin{equation}
\label{innertwist}
\left(
f^{-1} \circ (\gamma \circ f \circ \gamma^{-1})
\right)(x) = w_{\gamma}(x),
\end{equation}
where we identify $W_{\theta}^{\ext}$ with the extended affine Weyl group of $\Phi_{\aff}(G_{\theta, \qs}, T_{\qs}, F^{\unr})$ ignoring Galois actions.
Let $G_{\theta, \qs, \ad}$ denote the adjoint group of $G_{\theta, \qs}$ and $T_{\ad}$ be the image of $T_{\qs}$ in $G_{\theta, \qs, \ad}$.
According to \cite[Proposition~6.6.2]{KalethaPrasad}, the action of $N_{G_{\theta, \qs, \ad}}(T_{\ad})(F^{\unr})$ on $\cA^{\red}(G_{\theta, \qs}, T_{\qs}, F^{\unr})$ induces an isomorphism
\[
N_{G_{\theta, \qs, \ad}}(T_{\ad})(F^{\unr})/T_{\ad}(F^{\unr})_{0} \isoarrow W_{\theta}^{\ext}.
\]
Hence, \eqref{innertwist} gives us a 1-cocycle 
\[
\Gal(F^{\unr}/F) \rightarrow N_{G_{\theta, \qs, \ad}}(T_{\ad})(F^{\unr})/T_{\ad}(F^{\unr})_{0}
\]
given by $\gamma \mapsto w_{\gamma}$.
According to \cite[Lemma~2.3.1]{MR2480618} and \cite[Lemma~3.1.7]{kal19}, we have 
\[
H^1(\Gal(F^{\unr}/F), T_{\ad}(F^{\unr})_{0}) =
H^2(\Gal(F^{\unr}/F), T_{\ad}(F^{\unr})_{0}) = \{1\}.
\]
Thus, the 1-cocycle lifts to a 1-cocycle 
\[
\Gal(F^{\unr}/F) \rightarrow N_{G_{\theta, \qs, \ad}}(T_{\ad})(F^{\unr}), \, \gamma \mapsto n_{\gamma},
\]
and its cohomology class is uniquely determined by the cohomology class of the 1-cocycle $\gamma \mapsto w_{\gamma}$.
Let $\psi \colon G_{\theta, \qs} \rightarrow G_{\theta}$ be the inner twist of $G_{\theta, \qs}$ such that $\left(
\psi^{-1} \circ (\gamma \circ \psi \circ \gamma^{-1})
\right)(g) = n_{\gamma} g n_{\gamma}^{-1}$ for all $\gamma \in \Gal(F^{\unr}/F)$ and $g \in G_{\theta, \qs}(F^{\unr})$.
We also use the same notation $\psi$ for the induced map $\cA(G_{\theta, \qs}, T_{\qs}, F^{\unr}) \rightarrow \cA(G_{\theta}, \psi(T_{\qs}), F^{\unr})$.
By the definition of $G_{\theta}$, the maximal torus $\psi(T_{\qs})$ of $G_{\theta}$ is isomorphic to $T_{\theta}$ as $F$-tori, and the isomorphism
\[
f \circ \psi^{-1} \colon \cA^{\red}(G_{\theta}, \psi(T_{\qs}), F^{\unr}) \isoarrow \cA_{\theta}
\]
is $\Gal(F^{\unr}/F)$-equivariant.
We identify $\gamma(T_{\qs})$ and  $\cA^{\red}(G_{\theta}, \psi(T_{\qs}), F^{\unr})$ with $T_{\theta}$ and $\cA_{\theta}$ via these isomorphisms, respectively.
Then we obtain from the definition of $G_{\theta}$ that $\Phi(G_{\theta}, T_{\theta}) = \Phi(G, T)^{\normal}_{\theta}$ and $\Phi_{\aff}(G_{\theta}, T_{\theta}) = \Phi_{\aff}(G, T)^{\normal}_{\theta}$ as $\Gal(F^{\unr}/F)$-sets.
We also identify $\cA(G_{\theta}, T_{\theta}, F^{\unr})$ with $\cA(G, T, F^{\unr})$ as follows.
We fix a lift $\widetilde{o} \in \cA(G_{\theta}, T_{\theta}, F^{\unr})$ of $\psi(o) \in \cA^{\red}(G_{\theta}, T_{\theta}, F^{\unr})$ and $\widetilde{x}_{s} \in \cA(G, T, F^{\unr})$ of $x_{s} \in \cA_{\theta}$.
For $\gamma \in \Gal(F^{\unr}/F)$, let $z_{\gamma} \in X_{*}(Z(G_{\theta})^{\circ}) \otimes_{\bZ} \bR$ be the element such that
\[
\gamma(\widetilde{o}) - \widetilde{o} = \gamma(\widetilde{x}_{s}) - \widetilde{x}_{s} + z_{\gamma},
\]
where $Z(G_{\theta})^{\circ}$ denotes the identity component of $Z(G_{\theta})$.
Since $H^{1}(\Gal(F^{\unr}/F), X_{*}(Z(G_{\theta})^{\circ}) \otimes_{\bZ} \bR) = \{1\}$, we can take $z \in X_{*}(Z(G_{\theta})^{\circ}) \otimes_{\bZ} \bR$ such that $\gamma(z) - z = z_{\gamma}$ for all $\gamma \in \Gal(F^{\unr}/F)$, 
Then the isomorphism
\[
\widetilde{f \circ \psi^{-1}} \colon \cA(G_{\theta}, T_{\theta}, F^{\unr}) \isoarrow \cA(G, T, F^{\unr})
\]
defined by $\widetilde{o} + v \mapsto \widetilde{x}_{s} + z + v$ for $v \in X_{*}(T) \otimes_{\bZ} \bR = X_{*}(T_{\theta}) \otimes_{\bZ} \bR$ is $\Gal(F^{\unr}/F)$-equivariant, and the following diagram commutes:
\[
\xymatrix@R+1pc@C+1pc{
\cA(G_{\theta}, T_{\theta}, F^{\unr})
	\ar[d]
	\ar[r]^-{\widetilde{f \circ \psi^{-1}}}
	\ar@{}[dr]|\circlearrowleft
& \cA(G, T, F^{\unr})
	\ar[d] \\
\cA^{\red}(G_{\theta}, T_{\theta}, F^{\unr})
	\ar[r]^-{f \circ \psi^{-1}}
& \cA_{\theta}.
}
\]
We identify $\cA(G_{\theta}, T_{\theta}, F^{\unr})$ with $\cA(G, T, F^{\unr})$ via $\widetilde{f \circ \psi^{-1}}$.
Under this identification, we regard $x_{0} \in \cA_{x_{0}} \subseteq \cA(G, T, F^{\unr})$ as an element of $\cA(G_{\theta}, T_{\theta}, F^{\unr})$.

Let $M_{\theta}$ denote the Levi subgroup of $G_{\theta}$ with maximal torus $T_{\theta}$ and root system
\[
\Phi(M_{\theta}, T_{\theta}) = \Phi(G_{\theta}, T_{\theta}) \cap \bQ \Phi(M, T) \subseteq X^{*}(T) \otimes_{\bZ} \bQ.
\]
We fix an admissible embedding $\iota_{\theta} \colon \cB(M_{\theta}, F) \hookrightarrow \cB(G_{\theta}, F)$ and regard $x_{0} \in \cB(M_{\theta}, F)$.
Then the point $[x_{0}]_{M_{\theta}}$ is a vertex of $\cB^{\red}(M_{\theta}, F)$, and the embedding $\iota_{\theta}$ is $0$-generic relative to $x_{0}$.
Since $T$ is an elliptic maximal torus of $M$, we have 
\[
X_{*}(A_{M_{\theta}}) \otimes_{\bZ} \bR = X_{*}(A_{M}) \otimes_{\bZ} \bR
\]
as subspaces of $X_{*}(T_{\theta}) \otimes_{\bZ} \bR = X_{*}(T) \otimes_{\bZ} \bR $.
Hence, we have $x_{0} + (X_{*}(A_{M_{\theta}}) \otimes_{\bZ} \bR) = x_{0} + (X_{*}(A_{M}) \otimes_{\bZ} \bR) = \cA_{x_{0}}$ as subsets of $\cA(G_{\theta}, T_{\theta}, F^{\unr}) = \cA(G, T, F^{\unr})$.

\begin{proposition}
\label{prop:thetaxvsxtheta}
Let $x \in \cA(G_{\theta}, T_{\theta}, F^{\unr}) = \cA(G, T, F^{\unr})$.
Then the pairs $((\sfG_{\theta})^{\circ}_{x}, (\sfM_{\theta})^{\circ}_{x})$ and $((\sfG^{\circ}_{x})^{*}_{\theta}, (\sfM^{\circ}_{x})^{*}_{\theta})$ are adjointly isomorphic up to taking duals in the sense of Definition~\ref{def:almostisomforgroups}, where $(\sfG_{\theta})^{\circ}_{x}$ and $(\sfM_{\theta})^{\circ}_{x}$ denote the reductive quotients of the special fibers of the connected parahoric group schemes associated to the point $x \in \cB(M_{\theta}, F) \subseteq \cB(G_{\theta}, F)$, and $(\sfG^{\circ}_{x})^{*}_{\theta}$ and $(\sfM^{\circ}_{x})^{*}_{\theta}$ denote the groups defined as in Section~\ref{subsec:statementofmainresult} for the connected reductive groups $\sfG^{\circ}_{x}$ and $\sfM^{\circ}_{x}$ over $\ff$.
\end{proposition}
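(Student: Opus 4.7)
The plan is to verify the criterion of Lemma~\ref{lemmacriterionforalmostisomorphic} by explicitly computing both root systems as subsets of a common rational vector space, then matching the $\Gal(\overline{\ff}/\ff)$-orbits of their irreducible components pairwise via isomorphisms that are either direct or compositions with root-system duality, and checking that this matching respects the Levi subgroup structure.

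First I would set up a common ambient space. The isogeny $i_{\theta} \colon T \to T_{\theta}$ identifies $X^{*}(T_{\theta}) \otimes_{\bZ} \bQ$ with $X^{*}(T) \otimes_{\bZ} \bQ$, and under the identification $\cA(G_{\theta}, T_{\theta}, F^{\unr}) = \cA(G, T, F^{\unr})$ from $\S$\ref{subsec:defofGthetaandu} the Galois actions on the maximal tori $\sfT^{\circ}$ of $\sfG^{\circ}_{x}$ (coming from $T$) and $\sfT_{\theta, x}$ of $(\sfG_{\theta})^{\circ}_{x}$ (coming from $T_{\theta}$) are compatible. Next I would write down both root systems. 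On the one hand, from the definition of $\sfG^{*}_{s}$ in Section~\ref{subsec:statementofmainresult},
\[
\Phi\bigl((\sfG^{\circ}_{x})^{*}_{\theta}\bigr) = \bigl\{ D_{T}(a)^{\vee} : a \in \Phi_{\aff}(G, T),\ a(x) = 0,\ D_{T}(a) \in \Phi(G, T)_{\theta} \bigr\}.
\]
On the other hand, using $\Phi_{\aff}(G_{\theta}, T_{\theta}) = \Phi_{\aff}(G, T)^{\normal}_{\theta}$ together with the explicit formula for the normalization,
\[
\Phi\bigl((\sfG_{\theta})^{\circ}_{x}\bigr) = \bigl\{ D_{T}(a)/r_{a} : a \in \Phi_{\aff}(G, T)_{\theta, x_{s}},\ a(x)/r_{a} \in \bZ \bigr\}.
\]

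Then I would match the two root systems $\Gal(\overline{\ff}/\ff)$-orbit by $\Gal(\overline{\ff}/\ff)$-orbit of irreducible components. Fix such an orbit of components of $\Phi((\sfG^{\circ}_{x})^{*}_{\theta})$, and let $\Phi_{1}$ be the corresponding subset of root directions in $\Phi(G, T)_{\theta}$. The matching orbit of components of $\Phi((\sfG_{\theta})^{\circ}_{x})$ consists of the same directions $\alpha = D_{T}(a)$ rescaled by $1/r_{a}$, with the same $a(x) \in r_{a} \bZ$ selection condition. When the relevant $r_{a}$ all equal $1$ (which is automatic if the underlying factor of $G$ splits over $F^{\unr}$, since then $\Phi_{\aff}(G, T)^{\normal}_{\theta} = \Phi_{\aff}(G, T)_{\theta}$), the root system of $(\sfG_{\theta})^{\circ}_{x}$ on this factor equals $\{D_{T}(a) : a \in \Phi_{\aff}(G, T)_{\theta},\ a(x) = 0\}$, which is exactly dual to the matching factor of $(\sfG^{\circ}_{x})^{*}_{\theta}$, so the second condition of Lemma~\ref{lemmacriterionforalmostisomorphic}(2) applies. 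When some $r_{a} \neq 1$, which occurs precisely for orbits whose affine root subsystem inside $\Phi_{\aff}(G, T)$ is of ramified type $\widetilde{BC}_{n}$, the rescaling of roots by $1/r_{a}$ (and coroots by $r_{a}$) converts a component of type $B_{n}$ into one of type $C_{n}$ or vice versa; since $B_{n}$ and $C_{n}$ are dual to each other, the matching is again by the dual condition of Lemma~\ref{lemmacriterionforalmostisomorphic}(2).

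Finally I would verify the Levi condition. By construction $\Phi(M_{\theta}, T_{\theta}) = \Phi(G_{\theta}, T_{\theta}) \cap \bQ\Phi(M, T)$, and the identical intersection selects the roots of $(\sfM_{\theta})^{\circ}_{x}$ inside $(\sfG_{\theta})^{\circ}_{x}$ as well as the roots of $(\sfM^{\circ}_{x})^{*}_{\theta}$ inside $(\sfG^{\circ}_{x})^{*}_{\theta}$; since both the rescaling and the dualization respect this intersection, the matching restricts compatibly to Levi factors. The main obstacle I anticipate is careful bookkeeping in the ramified case: one must confirm that within any single $\Gal(\overline{\ff}/\ff)$-orbit of irreducible components the normalization factors $r_{a}$ are constant (so that the rescaling can be interpreted uniformly as duality) and that the Galois action on $G_{\theta}$ produced by the $1$-cocycle $\gamma \mapsto n_{\gamma}$ in $\S$\ref{subsec:defofGthetaandu} matches, after passing to $\sfT_{\theta, x}$, the Galois action on the dual side; the key input here is that the inner twist $\psi$ was chosen precisely to absorb the discrepancy between the $F$-structures on $T$ and on $T_{\theta}$ visible in \eqref{innertwist}.
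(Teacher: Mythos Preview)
Your approach is in the right spirit---compute both root systems and invoke Lemma~\ref{lemmacriterionforalmostisomorphic}---but you miss a simplification that the paper exploits and that makes the argument substantially cleaner. The paper first observes that it suffices to compare $((\sfG_{\theta})^{\circ}_{x}, (\sfM_{\theta})^{\circ}_{x})$ with $((\sfG^{\circ}_{x})_{\theta}, (\sfM^{\circ}_{x})_{\theta})$ rather than with the dual pair $((\sfG^{\circ}_{x})^{*}_{\theta}, (\sfM^{\circ}_{x})^{*}_{\theta})$, since the latter is obtained from the former by dualizing every simple factor, and $\sim_{\dual}$ is transitive. After this reduction both root systems sit inside $X^{*}(T)\otimes_{\bZ}\bQ$ as sets of roots (no coroots), and the paper simply computes
\[
\Phi((\sfG_{\theta})^{\circ}_{x},\sfT^{\circ}) = \bigl\{D_{T}(a): a\in\Phi_{\aff}(G_{\theta},T_{\theta}),\ a(x)=0\bigr\},\qquad
\Phi((\sfG^{\circ}_{x})_{\theta},\sfT^{\circ}) = \bigl\{D_{T}(a): a\in\Phi_{\aff}(G,T)_{\theta},\ a(x)=0\bigr\},
\]
using \cite[Proposition~9.4.23]{KalethaPrasad} for the first and the definitions for the second. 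Since $\Phi_{\aff}(G_{\theta},T_{\theta})=\Phi_{\aff}(G,T)^{\normal}_{\theta}$ agrees with $\Phi_{\aff}(G,T)_{\theta}$ up to scalar multiples (same vanishing hyperplanes), the two root systems at $x$ differ only by a positive rescaling on each ray, and the paper invokes Lemma~\ref{lemmacriterionforalmostisomorphic} directly.

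By working with $(\sfG^{\circ}_{x})^{*}_{\theta}$ instead, you are forced to carry an extra duality throughout, which is what pushes you into the orbit-by-orbit case split. That split is not wrong in principle, but your treatment of the ramified case is imprecise: the condition ``$r_{a}\neq 1$'' is a property of the affine root system $\Phi_{\aff}(G,T)_{\theta}$, not of $\Phi_{\aff}(G,T)$, so locating it via ``ramified type $\widetilde{BC}_{n}$ inside $\Phi_{\aff}(G,T)$'' is off target; and the assertion that the rescaling ``converts $B_{n}$ into $C_{n}$'' tacitly uses that the $r_{a}$ are constant on Weyl-group orbits within an irreducible component of $\Phi((\sfG^{\circ}_{x})_{\theta},\sfT^{\circ})$ and take only two values in the right ratio---true, but you have not justified it. The paper's reduction to $(\sfG^{\circ}_{x})_{\theta}$ sidesteps all of this. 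Your concerns about the Galois action are also over-engineered: once one notes that both root systems are computed inside $X^{*}(\sfT^{\circ})$ with its canonical $\Gal(\overline{\ff}/\ff)$-action, the equivariance is automatic and the cocycle $\gamma\mapsto n_{\gamma}$ plays no further role.
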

\begin{proof}
It suffices to show that the pairs $((\sfG_{\theta})^{\circ}_{x}, (\sfM_{\theta})^{\circ}_{x})$ and $((\sfG^{\circ}_{x})_{\theta}, (\sfM^{\circ}_{x})_{\theta})$ are adjointly isomorphic up to taking duals.
According to \cite[Proposition~9.4.23]{KalethaPrasad}, we have
\[
\Phi((\sfG_{\theta})^{\circ}_{x}, \sfT^{\circ}) = \left\{
D_{T}(a) \mid a \in \Phi_{\aff}(G_{\theta}, T_{\theta}), \, a(x) = 0
\right\}
\]
and
\begin{align*}
\Phi((\sfG^{\circ}_{x})_{\theta}, \sfT^{\circ})
&=
\left\{
\alpha \in \Phi(\sfG^{\circ}_{x}, \sfT^{\circ}) \mid \theta \circ N_{\ff'/\ff} \circ \alpha^{\vee} = 1
\right\} \\
&=
\left\{
D_{T}(a) \mid a \in \Phi_{\aff}(G, T), \, a(x) = 0
\right\} \cap \left\{
\alpha \in \Phi(G, T) \mid \theta \circ N_{F' / F} \circ \alpha^{\vee} \restriction_{\cO_{F'}^{\times}} = 1
\right\} \\
&=
\left\{
D_{T}(a) \mid a \in \Phi_{\aff}(G, T)_{\theta}, \, a(x) = 0
\right\}.
\end{align*}
Since the affine root system $\Phi_{\aff}(G_{\theta}, T_{\theta}) = \Phi_{\aff}(G, T)^{\normal}_{\theta}$ agrees with the affine root system $\Phi_{\aff}(G, T)_{\theta}$ up to scalar multiples, the root systems $\Phi((\sfG_{\theta})^{\circ}_{x}, \sfT^{\circ})$ and $\Phi((\sfG^{\circ}_{x})_{\theta}, \sfT^{\circ})$ satisfy the assumptions of Lemma~\ref{lemmacriterionforalmostisomorphic}.
Thus, the proposition follows from Lemma~\ref{lemmacriterionforalmostisomorphic}.
\end{proof}

\begin{remark}
\label{remarkwhyuptotakingduals}
If $G$ splits over $F^{\unr}$, we have $(\sfG_{\theta})^{\circ}_{x} \simeq (\sfG^{\circ}_{x})_{\theta}$ since the affine root system $\Phi_{\aff}(G_{\theta}, T_{\theta}) = \Phi_{\aff}(G, T)^{\normal}_{\theta}$ agrees with $\Phi_{\aff}(G, T)_{\theta}$.
However, if $G$ is ramified, the affine root system $\Phi_{\aff}(G_{\theta}, T_{\theta}) $ agrees with the affine root system $\Phi_{\aff}(G, T)_{\theta}$ only up to scalar multiples.
This is the reason why we need the generalization Corollary~\ref{corollary:reductiontounipdualver} of Proposition~\ref{prop:reductiontounipotentqparameters} to prove the main theorem.
\end{remark}

Let $u^{*}_{\sfM^{\circ}_{x_0}}$ denote the unipotent, cuspidal representation of $(\sfM^{\circ}_{x_0})^{*}_{\theta}(\ff)$ attached to $\rho_{M, 0}$ as in Section~\ref{section:finitegroupcase}.
We define the unipotent, cuspidal representation $u_{\sfM^{\circ}_{x_0}}$ of $(\sfM_{\theta})^{\circ}_{x_0}(\ff)$ as the unique unipotent, cuspidal representation that is adjointly isomorphic to $u^{*}_{\sfM^{\circ}_{x_0}}$ up to taking duals.
We write $u_{M, 0}$ for the inflation of $u_{\sfM^{\circ}_{x_0}}$ to $M_{\theta}(F)_{x_{0}, 0}$.
Let $u_{M, \dc}$ be an irreducible representation of $M(F)_{x_{0}}$ such that $u_{M, \dc} \restriction_{M(F)_{x_{0}, 0}}$ contains $u_{M, 0}$.
Let $(K_{\theta, M}, u_{\rho_{M}})$ be either $(M_{\theta}(F)_{x_0, 0}, u_{M, 0})$ or $(M_{\theta}(F)_{x_0}, u_{M, \dc})$.

Now, we will apply \cite[Section~5]{2024arXiv240807801A} to $(G_{\theta}, M_{\theta}, K_{\theta, M}, u_{\rho_{M}})$.
We define 
\[
\Wthetaheart = \left(
N_{G_{\theta}(F)}(u_{\rho_{M}}) \cap N_{G_{\theta}}(M_{\theta})(F)_{[x_0]_{M_{\theta}}}
\right)/ K_{\theta, M}
\]
and
\begin{equation}
\label{descriptionofPhiafftheta}
\Phi_{\aff}(G_{\theta}, A_{M}) = 
\left\{
a \restriction_{\cA_{x_{0}}} \mid a \in \Phi_{\aff}(G_{\theta}, T_{\theta}) \smallsetminus \Phi_{\aff}(M_{\theta}, T_{\theta})
\right\}.
\end{equation}

We define the subset $\mathfrak{H}_{\theta}$ of $\mathfrak{H}$ by
$
\mathfrak{H}_{\theta} = \left\{
H_{a} \mid a \in \Phi_{\aff}(G_{\theta}, A_{M})
\right\}
$, where $H_{a}$ denotes the affine hyperplane in $\cA_{x_0}$ defined by $H_{a} = \left\{
x \in \cA_{x_0} \mid a(x) = 0
\right\}$.
Since the affine root system $\Phi_{\aff}(G_{\theta}, T_{\theta}) = \Phi_{\aff}(G, T)^{\normal}_{\theta}$ agrees with the affine root system $\Phi_{\aff}(G, T)_{\theta} \subseteq \Phi_{\aff}(G, T)$ up to scalar multiples, Lemma~\ref{lemma:descriptionofphiaffGaM} and \eqref{descriptionofPhiafftheta} imply that $\mathfrak{H}_{\theta} \subseteq \mathfrak{H}$.
We define the subset $\cA_{\theta-\gen}$ of $\cA_{x_0}$ by
$
\cA_{\theta-\gen} = \cA_{x_0} \smallsetminus
\left(
\bigcup_{H \in \mathfrak{H}_{\theta}} H
\right)
$.
We note that $\cA_{\gen} \subseteq \cA_{\theta-\gen}$.
For $x \in \cA_{x_{0}}$, we define $K_{\theta, x} = K_{\theta, M} \cdot G_{\theta}(F)_{x, 0}$ and $K_{\theta, x, +} = G_{\theta}(F)_{x, 0+}$.
If $x \in \cA_{\theta-\gen}$, we define an irreducible representation $u_{x}$ of $K_{x, \theta}$ as the composition of $u_{\rho_{M}}$ with the inverse of the isomorphism $K_{\theta, M}/M_{\theta}(F)_{x, 0+} \isoarrow K_{\theta, x}/K_{\theta, x, +}$ that comes from the inclusion $K_{\theta, M} \subseteq K_{\theta, x}$.
We write 
$\cK_{\theta} = \left\{
(K_{\theta, x}, K_{\theta, x, +}, u_{x})
\right\}_{x \in \cA_{\theta-\gen}}$.
Let $\mathfrak{H}_{\Kthetarel}$ denote the subset of $\mathfrak{H}$ consisting of the affine hyperplanes that are $\cK_{\theta}$-relevant in the sense of \cite[Definition~3.5.6]{2024arXiv240807801A}.
We define $W_{\Kthetarel} = \left \langle
s_{H} \mid H \in \mathfrak{H}_{\Kthetarel}
\right \rangle$.
According to \cite[Proposition~3.7.4, Proposition~5.3.5(1)]{2024arXiv240807801A}, there exists a normal subgroup $\Wthetaaff$ of $\Wthetaheart$ such that the action of $\Wthetaheart$ on $\cA_{x_{0}}$ restricts to an isomorphism $\Wthetaaff \simeq W_{\Kthetarel}$, and $W_{\Kthetarel}$ is the affine Weyl group of an affine root system on the quotient affine space $\cA_{\Kthetarel}$ of $\cA_{x_{0}}$ defined by
\[
\cA_{\Kthetarel} = \cA_{x_{0}}/\left(
\bigcap_{H \in \mathfrak{H}_{\Kthetarel}} \ker(D_{A_{M}}(a_{H}))
\right).
\]
Let $S_{\Kthetarel} \subseteq \Wthetaaff$ denote the subset of simple reflections corresponding to the chamber containing the image of $x_{0}$ in $\cA_{\Kthetarel}$.

Applying \cite[Theorem~5.3.6]{2024arXiv240807801A} and \cite[Proposition~3.9.1]{2024arXiv240807801A} to $(G_{\theta}, M_{\theta}, u_{\rho_{M}}, \cK_{\theta})$, we obtain the following explicit description of $\cH\left(
G_{\theta}(F), (K_{\theta, x_{0}}, u_{x_{0}})
\right)$.
\begin{theorem}[{\cite[Theorem~5.3.6, Proposition~3.9.1]{2024arXiv240807801A}}]
\label{thm:descriptionofhecketheta}
We have an isomorphism of $\Coeff$-algebras 
\[
\cH\left(
G_{\theta}(F), (K_{\theta, x_{0}}, u_{x_{0}})
\right)
\simeq
\Coeff[\Omega(u_{\rho_{M}}), \mu_{\theta}] \ltimes \cH_{\Coeff}(W(u_{\rho_{M}})_{\aff}, q_{\theta})
\]
that preserves the anti-involutions on each algebra defined in \cite[Section~3.11]{2024arXiv240807801A} for a $2$-cocycle $\mu_{\theta}$ on a subgroup $\Omega(u_{\rho_{M}})$ of $\Wthetaheart$ and a parameter function $q_{\theta} \colon s \mapsto q_{\theta, s}$ on $
S_{\Kthetarel}$.
For $s \in S_{\Kthetarel}$, we have $q_{\theta, s} = q\left(
\ind_{K_{\theta, x}}^{K_{\theta, h}} (\rho_{\theta, x})
\right)$, where $x, y \in \cA_{\gen}$ are points such that $\mathfrak{H}_{x, y} = \{H_{s}\}$, and $h \in H_{s}$ is the unique point for which $h = x + t \cdot (y - x)$ for some $0 < t < 1$.
\end{theorem}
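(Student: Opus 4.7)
The plan is to recognize Theorem~\ref{thm:descriptionofhecketheta} as a direct application of the two cited results \cite[Theorem~5.3.6]{2024arXiv240807801A} and \cite[Proposition~3.9.1]{2024arXiv240807801A} (restated above as Theorem~\ref{thm:descriptionofhecke} and Proposition~\ref{calculationofqs}), but with the original depth-zero $G$-datum $(G, M, (x_0, \iota), (K_M, \rho_M))$ replaced by the unipotent $G_{\theta}$-datum $(G_{\theta}, M_{\theta}, (x_0, \iota_{\theta}), (K_{\theta, M}, u_{\rho_M}))$ constructed in $\S$\ref{subsec:defofGthetaandu}, with associated family $\cK_{\theta}$. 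Thus the proof reduces to checking that the replacement is a valid depth-zero $G_{\theta}$-datum in the sense of \cite[Section~7.1]{Kim-Yu} and that the associated family matches the one defined here.

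The items I would verify are the following. First, $G_{\theta}$ is a connected reductive group over $F$ and $M_{\theta}$ is a Levi subgroup of $G_{\theta}$, which is immediate from the construction. Second, the point $[x_0]_{M_{\theta}}$ is a vertex of $\cB^{\red}(M_{\theta}, F)$: this holds because $T$ is elliptic in $M$ and, by construction, $X_*(A_{M_{\theta}}) \otimes_{\bZ} \bR = X_*(A_M) \otimes_{\bZ} \bR$, so under the $\Gal(F^{\unr}/F)$-equivariant identification of apartments $\cA(G_{\theta}, T_{\theta}, F^{\unr}) \simeq \cA(G, T, F^{\unr})$ of $\S$\ref{subsec:defofGthetaandu}, a vertex is sent to a vertex. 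Third, the embedding $\iota_{\theta}$ is $0$-generic relative to $x_0$; this is asserted in $\S$\ref{subsec:defofGthetaandu}. Fourth, $u_{\sfM^{\circ}_{x_0}}$ is an irreducible, unipotent, cuspidal representation of $(\sfM_{\theta})^{\circ}_{x_0}(\ff)$: cuspidality of $u^*_{\sfM^{\circ}_{x_0}}$ is \cite[Theorem~3.2.22]{MR4211779}, and the $\sim_{\dual}$ relation furnished by Proposition~\ref{prop:thetaxvsxtheta} transports cuspidal unipotent representations to cuspidal unipotent representations.

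Once the datum is validated, Theorem~\ref{thm:descriptionofhecke} applied to $(G_{\theta}, M_{\theta}, (x_0, \iota_{\theta}), (K_{\theta, M}, u_{\rho_M}))$ directly yields the algebra isomorphism
\[
\cH\left(G_{\theta}(F), (K_{\theta, x_0}, u_{x_0})\right) \simeq \Coeff[\Omega(u_{\rho_M}), \mu_{\theta}] \ltimes \cH_{\Coeff}(W(u_{\rho_M})_{\aff}, q_{\theta})
\]
together with the compatibility of anti-involutions, while Proposition~\ref{calculationofqs} applied to the same datum yields the explicit formula $q_{\theta, s} = q(\ind_{K_{\theta, x}}^{K_{\theta, h}}(u_x))$ for $s \in S_{\Kthetarel}$.

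There is essentially no obstacle: the work has already been done in $\S$\ref{subsec:defofGthetaandu} to arrange that $(G_{\theta}, M_{\theta}, x_0, \iota_{\theta})$ fits exactly into the framework of $\S$\ref{subsec:reviewofAFMO}, and the only potentially delicate point is ensuring that the identification of apartments is compatible with the parahoric structures, which is already encoded in Proposition~\ref{prop:thetaxvsxtheta}. The substantive content of the paper will be the subsequent comparison of $\cH_{\Coeff}(\Waff, q)$ with $\cH_{\Coeff}(W(u_{\rho_M})_{\aff}, q_{\theta})$, for which Theorem~\ref{thm:descriptionofhecketheta} serves as the starting point on the unipotent side.
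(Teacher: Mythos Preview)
Your proposal is correct and matches the paper's approach exactly: the paper gives no separate proof of this theorem, stating it simply as the result of ``Applying \cite[Theorem~5.3.6]{2024arXiv240807801A} and \cite[Proposition~3.9.1]{2024arXiv240807801A} to $(G_{\theta}, M_{\theta}, u_{\rho_{M}}, \cK_{\theta})$.'' Your verification that the $G_\theta$-datum satisfies the required hypotheses is more explicit than the paper (which relegates those checks to the constructions in \S\ref{subsec:defofGthetaandu}), but the substance is identical.
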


\subsection{An isomorphism of affine Hecke algebras}
\label{subsection:An isomorphism of affine Hecke algebras}

In this subsection, we prove the main theorem of this paper:

\begin{theorem}
\label{mainthm:heckealgebraisom}
We have $\mathfrak{H}_{\Krel} = \mathfrak{H}_{\Kthetarel}$ and 
$
W_{\Krel} = W_{\Kthetarel}
$.
Moreover, for $s \in S_{\Krel} = S_{\Kthetarel}$, we have $q_{s} = q_{\theta, s}$.
Thus, we have 
\[
\cH_{\Coeff}(\Waff, q) \simeq \cH_{\Coeff}(\Wthetaaff, q_{\theta}).
\]
\end{theorem}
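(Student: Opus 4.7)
The plan is to prove the theorem one hyperplane at a time. Fix $H \in \mathfrak{H}$, pick $x, y \in \cA_{\gen}$ with $\mathfrak{H}_{x,y} = \{H\}$, and let $h \in H$ be the unique point with $h = x + t(y-x)$ for some $0 < t < 1$; since $\cA_{\gen} \subseteq \cA_{\theta-\gen}$, the same $x, y, h$ may be used on both sides. Combining Proposition~\ref{calculationofqs}, Theorem~\ref{thm:descriptionofhecketheta}, and Lemma~\ref{lemma:criterionofrelevance} applied to both families $\cK$ and $\cK_\theta$, all three conclusions of the theorem reduce at once to the single identity
\[
q\bigl(\ind_{K_x}^{K_h}(\rho_x)\bigr) = q\bigl(\ind_{K_{\theta, x}}^{K_{\theta, h}}(u_x)\bigr).
\]
Indeed, Lemma~\ref{lemma:criterionofrelevance} converts this into $H \in \mathfrak{H}_{\Krel} \iff H \in \mathfrak{H}_{\Kthetarel}$; letting $H$ range over hyperplanes attached to $s \in S_{\Krel}$ then upgrades this to $W_{\Krel} = W_{\Kthetarel}$ and $q_s = q_{\theta, s}$, and the algebra isomorphism is immediate from the Bernstein--Lusztig presentation of an affine Hecke algebra.

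The first step toward the displayed identity is to reduce each side to a $q$-parameter of a parabolic induction over the residue field. Since $H$ is the only hyperplane of $\mathfrak{H}$ separating $x$ from $y$, passing from a generic point in $\cA_{x_0}$ to $h$ introduces exactly one new vanishing affine root, and this root lies in $\Phi_{\aff}(G, A_M)$; moreover every $h \in \cA_{x_0}$ projects to the vertex $[x_0]_M \in \cB^{\red}(M, F)$, so $\sfM^\circ_h = \sfM^\circ_{x_0}$ and is a maximal Levi of $\sfG^\circ_h$. The quotient $K_h/K_{h,+}$ is naturally a (possibly disconnected) subgroup of $\sfG_h(\ff)$ containing $\sfG^\circ_h(\ff)$ as a normal subgroup with abelian quotient, inside which the image of $K_x$ is a parabolic subgroup whose intersection with $\sfG^\circ_h(\ff)$ has Levi factor $\sfM^\circ_{x_0}(\ff)$ and inducing representation $\rho_{M,0}$. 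Applying Proposition~\ref{prop:comparisonofqparameters} in the ``reduction to connected parahoric'' mode flagged at the end of Section~\ref{section:comparisonofqparameter} reduces the left-hand side to $q\bigl(R^{\sfG^\circ_h}_{\sfM^\circ_{x_0}}(\rho_{M,0})\bigr)$; the entirely analogous argument for $G_\theta$ at the same $h$ reduces the right-hand side to $q\bigl(R^{(\sfG_\theta)^\circ_h}_{(\sfM_\theta)^\circ_{x_0}}(u_{M,0})\bigr)$.

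For the second step, Proposition~\ref{prop:thetaxvsxtheta} applied at $h$ yields $((\sfG_\theta)^\circ_h, (\sfM_\theta)^\circ_{x_0}) \sim_{\dual} ((\sfG^\circ_h)^*_\theta, (\sfM^\circ_{x_0})^*_\theta)$, and by the very definition of $u_{M,0}$ in $\S$\ref{subsec:defofGthetaandu} we have $u_{M,0} \sim_{\dual} u^*_{\sfM^\circ_{x_0}}$, where $u^*_{\sfM^\circ_{x_0}}$ is the cuspidal unipotent representation attached to $\rho_{M,0}$ via the Jordan decomposition of $\S$\ref{subsec:statementofmainresult}. Therefore Corollary~\ref{corollary:reductiontounipdualver}, applied to $(\sfG, \sfM, \rho_\sfM) = (\sfG^\circ_h, \sfM^\circ_{x_0}, \rho_{M,0})$ with target pair $((\sfG_\theta)^\circ_h, (\sfM_\theta)^\circ_{x_0})$ and target representation $u_{M,0}$, delivers
\[
q\bigl(R^{\sfG^\circ_h}_{\sfM^\circ_{x_0}}(\rho_{M,0})\bigr) = q\bigl(R^{(\sfG_\theta)^\circ_h}_{(\sfM_\theta)^\circ_{x_0}}(u_{M,0})\bigr),
\]
which is exactly the identity needed.

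The main obstacle will be the first reduction step: cleanly identifying $K_h/K_{h,+}$ as a subgroup of $\sfG_h(\ff)$, the image of $K_x$ as a parabolic subgroup with the correct Levi and inducing datum, and doing this simultaneously and compatibly on the $G$ and $G_\theta$ sides so that the pairs fed into Proposition~\ref{prop:thetaxvsxtheta} and Corollary~\ref{corollary:reductiontounipdualver} match up. Once this bookkeeping is in place, the substantive work has already been done in Sections~\ref{section:comparisonofqparameter} and~\ref{section:finitegroupcase}, in the form of the Jordan-decomposition-based $q$-parameter comparison.
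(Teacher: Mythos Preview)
Your overall strategy matches the paper's proof almost exactly: reduce both sides via Proposition~\ref{prop:comparisonofqparameters} to parabolic inductions over $\ff$, then compare those via Proposition~\ref{prop:thetaxvsxtheta} and Corollary~\ref{corollary:reductiontounipdualver}. The bookkeeping you flag as the ``main obstacle'' is routine and is carried out in the paper just as you sketch it.

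There is, however, one genuine gap. You fix $H \in \mathfrak{H}$ and then invoke Lemma~\ref{lemma:criterionofrelevance} for the family $\cK_\theta$ to convert the displayed $q$-identity into $H \in \mathfrak{H}_{\Krel} \iff H \in \mathfrak{H}_{\Kthetarel}$. But Lemma~\ref{lemma:criterionofrelevance} for $\cK_\theta$ is only available for $H \in \mathfrak{H}_\theta$, and in general $\mathfrak{H}_\theta \subsetneq \mathfrak{H}$ (this is exactly why $\cA_{\gen} \subsetneq \cA_{\theta\text{-}\gen}$, a containment you yourself use). For $H \in \mathfrak{H} \smallsetminus \mathfrak{H}_\theta$ there is nothing on the $\cK_\theta$ side to which the lemma applies, so your equivalence is not yet established for such $H$; equivalently, you have not shown $\mathfrak{H}_{\Krel} \subseteq \mathfrak{H}_\theta$.

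The paper closes this gap with a short extra argument that uses the $q$-identity you have already proved: if $H \in \mathfrak{H}_{\Krel}$ then $q\bigl(R_{\sfM^\circ_h}^{\sfG^\circ_h}(\rho_{M,0})\bigr) > 1$, hence by Corollary~\ref{corollary:reductiontounipdualver} also $q\bigl(R_{(\sfM_\theta)^\circ_h}^{(\sfG_\theta)^\circ_h}(u_{\sfM^\circ_{x_0}})\bigr) > 1$, so $(\sfM_\theta)^\circ_h \subsetneq (\sfG_\theta)^\circ_h$. This forces the existence of some $a \in \Phi_{\aff}(G_\theta, T_\theta) \smallsetminus \Phi_{\aff}(M_\theta, T_\theta)$ with $a(h) = 0$, and since $H$ is the \emph{unique} hyperplane of $\mathfrak{H}$ through $h$, the restriction $a\restriction_{\cA_{x_0}}$ cuts out $H$, giving $H \in \mathfrak{H}_\theta$. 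Once you insert this step, your argument is complete and coincides with the paper's.
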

\begin{proof}
\addtocounter{equation}{-1}
\begin{subequations}
Let $H \in \mathfrak{H}$.
We fix $x, y \in \cA_{\gen}$ such that $\mathfrak{H}_{x, y} = \{H\}$, and let $h \in H$ denote the unique point for which $h = x + t \cdot (y - x)$ for some $0 < t < 1$.
We denote by $\sfP_{x}$ the parabolic subgroup of $\sfG^{\circ}_{h}$ such that $\sfP_{x}(\ff) = G(F)_{x, 0} \cdot G(F)_{h, 0+} / G(F)_{h, 0+}$.
Then the group $\sfG^{\circ}_{x} = \sfM^{\circ}_{x} = \sfM^{\circ}_{h}=  \sfM^{\circ}_{x_{0}}$ is a Levi factor of $\sfP_{x}$.
Applying Proposition~\ref{prop:comparisonofqparameters} to $\sfG' = K_{h}/G(F)_{h, 0+}$, $\sfG = \sfG^{\circ}_{h}(\ff)$, $\sfP' = K_{x} \cdot G(F)_{h, 0+}/G(F)_{h, 0+}$, $\rho' = \rho_{x}$, and $\rho = \rho_{M, 0}$, we obtain that 
\begin{equation}
\label{reductiontoparahoric}
q\left(
\ind_{K_{x}}^{K_{h}} (\rho_{x}) 
\right)
=
q\left(
\ind_{\sfP_{x}(\ff)}^{\sfG^{\circ}_{h}(\ff)} \left(
\rho_{M, 0}
\right)
\right)
=
q\left(
R_{\sfM^{\circ}_{h}}^{\sfG^{\circ}_{h}} \left(
\rho_{M, 0}
\right)
\right).
\end{equation}
Similarly, applying Proposition~\ref{prop:comparisonofqparameters} to $\sfG' = K_{\theta, h}/G_{\theta}(F)_{h, 0+}$, $\sfG = (\sfG_{\theta})^{\circ}_{h}(\ff)$, $\sfP' = K_{\theta, x} \cdot G_{\theta}(F)_{h, 0+}/G_{\theta}(F)_{h, 0+}$, $\rho' = u_{x}$, and $\rho = u_{\sfM^{\circ}_{x_0}}$, we obtain that
\begin{equation}
\label{reductiontoparahorictheta}
q\left(
\ind_{K_{\theta, x}}^{K_{\theta, h}} (u_{x}) 
\right)
=
q\left(
\ind_{(\sfP_{\theta})_{x}(\ff)}^{(\sfG_{\theta})^{\circ}_{h}(\ff)} (u_{\sfM^{\circ}_{x_{0}}})
\right)
=
q\left(
R_{(\sfM_{\theta})^{\circ}_{h}}^{(\sfG_{\theta})^{\circ}_{h}} (u_{\sfM^{\circ}_{x_{0}}})
\right),
\end{equation}
where $(\sfP_{\theta})_{x}$ is the parabolic subgroup of $(\sfG_{\theta})^{\circ}_{h}$ such that $(\sfP_{\theta})_{x}(\ff) = G_{\theta}(F)_{x, 0} \cdot G_{\theta}(F)_{h, 0+}/G_{\theta}(F)_{h, 0+}$.
According to Proposition~\ref{prop:thetaxvsxtheta}, the pairs $((\sfG_{\theta})^{\circ}_{h}, (\sfM_{\theta})^{\circ}_{h})$ and $((\sfG^{\circ}_{h})^{*}_{\theta}, (\sfM^{\circ}_{h})^{*}_{\theta})$ are adjointly isomorphic up to taking duals, and we have $u_{\sfM^{\circ}_{x_{0}}} \sim_{\dual} u^{*}_{\sfM^{\circ}_{x_{0}}}$ by definition.
Thus, we obtain from Corollary~\ref{corollary:reductiontounipdualver} that
\[
q\left(
R_{\sfM^{\circ}_{h}}^{\sfG^{\circ}_{h}} \left(
\rho_{M, 0}
\right)
\right)
=
q\left(
R_{(\sfM_{\theta})^{\circ}_{h}}^{(\sfG_{\theta})^{\circ}_{h}} (u_{\sfM^{\circ}_{x_{0}}})
\right).
\]
Combining this with \eqref{reductiontoparahoric} and \eqref{reductiontoparahorictheta}, we obtain that
\begin{equation}
\label{qparametersreductiontounipotent}
q\left(
\ind_{K_{x}}^{K_{h}} (\rho_{x}) 
\right) = q\left(
\ind_{K_{\theta, x}}^{K_{\theta, h}} (u_{x}) 
\right).
\end{equation}
Hence, Lemma~\ref{lemma:criterionofrelevance} implies that for $H \in \mathfrak{H}_{\theta}$, the conditions that $H \in \mathfrak{H}_{\Krel}$ and $H \in \mathfrak{H}_{\Kthetarel}$ are equivalent.
Thus, to prove $\mathfrak{H}_{\Krel} = \mathfrak{H}_{\Kthetarel}$, it suffices to show that $\mathfrak{H}_{\Krel} \subseteq \mathfrak{H}_{\theta}$.
Let $H \in \mathfrak{H}_{\Krel}$.
Then we obtain from Lemma~\ref{lemma:criterionofrelevance}, Equation~\eqref{reductiontoparahoric}, and Corollary~\ref{corollary:reductiontounipdualver} that 
\[
1 \neq q\left(
\ind_{K_{x}}^{K_{h}} (\rho_{x}) 
\right) 
=
q\left(
R_{\sfM^{\circ}_{h}}^{\sfG^{\circ}_{h}} \left(
\rho_{M, 0}
\right)
\right)
=
q\left(
R_{(\sfM_{\theta})^{\circ}_{h}}^{(\sfG_{\theta})^{\circ}_{h}} (u_{\sfM^{\circ}_{x_{0}}})
\right).
\]

In particular, we have $(\sfM_{\theta})^{\circ}_{h} \subsetneq (\sfG_{\theta})^{\circ}_{h}$.
Hence, there exists $\alpha_{h} \in \Phi((\sfG_{\theta})^{\circ}_{h}, \sfT^{\circ})  \smallsetminus \Phi((\sfM_{\theta})^{\circ}_{h}, \sfT^{\circ})$.
According to \cite[Proposition~9.4.23]{KalethaPrasad}, we have
\[
\Phi((\sfG_{\theta})^{\circ}_{h}, \sfT^{\circ}) = \left\{
D_{T}(a) \mid a \in \Phi_{\aff}(G_{\theta}, T_{\theta}), \, a(h) = 0
\right\}.
\]
We take $a'_{h} \in \Phi_{\aff}(G_{\theta}, T_{\theta}) \smallsetminus \Phi_{\aff}(M_{\theta}, T_{\theta})$ such that $D_{T}(a'_{h}) = \alpha_{h}$ and $a'_{h}(h) = 0$.
We write $a_{h} = a'_{h} \restriction_{\cA_{x_{0}}}$. 
Then we have $a_{h} \in \Phi_{\aff}(G_{\theta}, A_{M})$ by \eqref{descriptionofPhiafftheta}, and $h \in H_{a_{h}} \in \mathfrak{H}_{\theta} \subseteq \mathfrak{H}$.
Since $\mathfrak{H}_{x, y} = \{H\}$, the definition of the point $h$ implies that $H$ is the unique affine hyperplane in $\mathfrak{H}$ such that $h \in H$.
Thus, we have $H = H_{a_{h}} \in \mathfrak{H}_{\theta}$.

Finally, combining Equation~\eqref{qparametersreductiontounipotent} with Proposition~\ref{calculationofqs} and Theorem~\ref{thm:descriptionofhecketheta}, we obtain that
\[
q_{s} = q\left(
\ind_{K_{x}}^{K_{h}} (\rho_{x}) 
\right) = q\left(
\ind_{K_{\theta, x}}^{K_{\theta, h}} (u_{x}) 
\right) = q_{\theta, s}
\]
for $s \in S_{\Krel} = S_{\Kthetarel}$.
\end{subequations}
\end{proof}

\subsection{On Hecke algebras for types constructed by Kim and Yu}
\label{subsec:heckealgebraforkimyutypes}

In this subsection, we assume that $p \neq 2$ and the group $G$ splits over a tamely ramified extension of $F$.

Let $\Sigma = \bigl(
(\overrightarrow{G}, M^0), \overrightarrow{r}, (x_{0}, \{\iota\}), (K_{M^0}, \rho_{M^0}), \overrightarrow{\phi}
\bigr)$ be a $G$-datum in the sense of \cite[Definition~4.1.1]{2024arXiv240807805A} (following \cite[7.2]{Kim-Yu}) such that $K_{M^0} = M^0(F)_{x_0, 0}$ or $M^0(F)_{x_0}$.
In \cite[Section~4.1]{2024arXiv240807805A}, we constructed a pair $(K_{x_{0}}, \rho_{x_{0}})$ of a compact, open subgroup $K_{x_{0}}$ of $G(F)$ and an irreducible smooth representation $\rho_{x_{0}}$ of $K_{x_{0}}$ following the construction of Kim and Yu (\cite[Section~7]{Kim-Yu}), which is based on \cite[Section~4]{Yu}, but twisted by the quadratic character from \cite[Section~4]{FKS}.
By \cite[Theorem~7.5]{Kim-Yu} and \cite{MR4357723},
the pair $(K_{x_{0}}, \rho_{x_{0}})$ is an $\fS(\Sigma)$-type for 
a finite subset $\fS(\Sigma)$ of $\IEC(G)$.
Moreover, according to \cite[Theorem~7.12]{Fi-exhaustion} under the assumption that $p$ does not divide the order of the absolute Weyl group of $G$, for any $\fs \in \IEC(G)$, there exists a $G$-datum $\Sigma$ such that $\{\fs\} = \fS(\Sigma)$.

Let $(K_{x_0}^0, \rho^0_{x_0})$ be the depth-zero type constructed from the depth-zero $G^0$-datum $((G^0, M^0), (x_{0}, \iota:\cB(M^0,F) \longrightarrow \cB(G^0,F)), (K_{M^0}, \rho_{M^0}))$, and we attach $(G^0_{\theta}, M^{0}_{\theta}, u_{\rho_{M^0}}, \cK^0_{\theta})$ to it as in Section~\ref{subsection:An isomorphism of affine Hecke algebras}.

\begin{theorem}
\label{mainthm+AFMO}
We have isomorphisms of $\Coeff$-algebras 
\begin{align*}
\cH(G(F), (K_{x_0}, \rho_{x_0})) &\simeq \cH(G^{0}(F), (K^0_{x_0}, \rho^{0}_{x_{0}})) \\
&\simeq \Coeff[\Wzeroz, \mu^0] \ltimes \cH_{\Coeff}(\Waffz, q) \\
&= \Coeff[\Wzeroz, \mu^0] \ltimes \cH_{\Coeff}(\Wthetaaffz, q_{\theta})
\end{align*}
that preserve the anti-involutions
on each algebra defined in \cite[Section~3.11]{2024arXiv240807801A}.
\end{theorem}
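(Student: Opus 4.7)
The plan is to obtain the three displayed isomorphisms by concatenating three results that are already in place, applied in sequence to the data associated with the $G$-datum $\Sigma$. The first isomorphism is a direct quote of the Hecke algebra reduction to depth zero from the earlier joint work, so the only thing to verify is that the conventions match: namely, that the depth-zero $G^0$-datum extracted here from $\Sigma$ coincides with the one used in \cite[Theorem~4.4.1]{2024arXiv240807805A}, and that the twist by the quadratic character from \cite[Section~4]{FKS} built into the definition of $(K_{x_0}, \rho_{x_0})$ is compatible with the isomorphism there, so that the anti-involutions really are intertwined.

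For the second isomorphism I would simply apply Theorem~\ref{thm:descriptionofhecke} of the present paper (which is \cite[Theorem~5.3.6]{2024arXiv240807801A}) to the depth-zero type $(K^0_{x_0}, \rho^0_{x_0})$ of $G^0$. This requires the standard compatibility check that the data $(G^0, M^0, (K_{M^0}, \rho_{M^0}))$ coming out of the $G$-datum indeed satisfies the hypotheses of Theorem~\ref{thm:descriptionofhecke}, i.e.\ that $[x_0]_{M^0}$ is a vertex in $\cB^{\red}(M^0, F)$ and that the embedding $\iota \colon \cB(M^0, F) \hookrightarrow \cB(G^0, F)$ is $0$-generic relative to $x_0$. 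Both conditions are part of the definition of a $G$-datum. The anti-involution statement is built into Theorem~\ref{thm:descriptionofhecke}.

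The third and final equality is precisely the content of the main Theorem~\ref{mainthm:heckealgebraisom} applied, this time, to $G^0$ in place of $G$: we attach $(G^0_\theta, M^0_\theta, u_{\rho_{M^0}}, \cK^0_\theta)$ to the depth-zero data exactly as in Section~\ref{subsec:defofGthetaandu}, and Theorem~\ref{mainthm:heckealgebraisom} then gives $W(\rho_{M^0})_{\mathrm{aff}} = W(u_{\rho_{M^0}})_{\mathrm{aff}}$ together with the equality of parameter functions $q = q_\theta$, hence the identification of the two affine Hecke algebras. Note that there is no additional assumption to check at this step: Theorem~\ref{mainthm:heckealgebraisom} was proved for an arbitrary connected reductive group over $F$ and an arbitrary depth-zero type of the form considered in $\S\ref{subsec:reviewofAFMO}$, which is exactly the setting of $(K^0_{x_0}, \rho^0_{x_0})$.

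The main (and essentially the only) obstacle is the compatibility of anti-involutions across all three isomorphisms. The second and third isomorphisms preserve anti-involutions by the cited theorems. For the first, one has to verify that the explicit support-preserving isomorphism constructed in \cite[Theorem~4.4.1]{2024arXiv240807805A} intertwines the two natural anti-involutions; given that both are defined through the same Haar measure conventions and the same transpose-conjugate operation on matrix-valued functions, this amounts to unwinding the support-matching formula in that theorem, and no new argument is required beyond the citation.
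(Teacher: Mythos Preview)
Your proposal is correct and follows essentially the same approach as the paper: the paper's proof is the one-line sentence ``The theorem follows from \cite[Theorem~4.4.1]{2024arXiv240807805A} and Theorem~\ref{mainthm:heckealgebraisom},'' and your three steps are precisely an unpacking of this, with the middle isomorphism (Theorem~\ref{thm:descriptionofhecke} applied to $G^0$) made explicit rather than left implicit in the framework. Your additional remarks on compatibility of conventions and anti-involutions are reasonable sanity checks but are already absorbed into the cited results, so no further argument is needed.
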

\begin{proof}
The theorem follows from \cite[Theorem~4.4.1]{2024arXiv240807805A} and Theorem~\ref{mainthm:heckealgebraisom}.
\end{proof}

\subsection{On Lusztig's conjecture about parameters}
\label{subsection:Lusztig'sconjecture}

In the previous subsection, we proved that the affine Hecke algebra appearing in the description of the Hecke algebra attached to a type constructed by Kim and Yu is isomorphic to the affine Hecke algebra attached to a unipotent type.
Under the assumption that $G$ splits over a tamely ramified extension of $F$, and $p$ does not divide the order of the absolute Weyl group of $G$, this result establishes a version of Lusztig's conjecture \cite[\S1(a)]{2020arXiv200608535L} that we interpret in a manner compatible with our framework:

\begin{conjecture}[{\cite[\S1(a)]{2020arXiv200608535L}}]
\label{conjecturebyLusztig}
For any Bernstein block $\Rep^\fs(G(F))$ of $G$, we have an equivalence of categories 
\[
\Rep^\fs(G(F)) \simeq \Mod\cH^{\fs},
\]
where $\cH^{\fs}$ is an extension of an affine Hecke algebra whose parameters agree with the parameters of the affine Hecke algebra for a unipotent type by a twisted group algebra, and $\Mod\cH^{\fs}$ denotes the category of right unital $\cH^{\fs}$-modules.
\end{conjecture}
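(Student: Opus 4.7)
The plan is to deduce the conjecture (under the standing assumptions that $G$ splits over a tamely ramified extension of $F$ and $p \nmid \abs{W}$) as a direct consequence of Theorem~\ref{mainthm+AFMO} together with the exhaustion result of Fintzen. First I would fix a Bernstein block $\Rep^\fs(G(F))$. By \cite[Theorem~7.12]{Fi-exhaustion}, the tameness hypotheses guarantee the existence of a $G$-datum $\Sigma$ such that $\fS(\Sigma) = \{\fs\}$, so that the Kim--Yu type $(K_{x_0}, \rho_{x_0})$ attached to $\Sigma$ is an $\fs$-type. The general theory of Bushnell--Kutzko types then provides the standard equivalence of categories
\[
\Rep^\fs(G(F)) \isoarrow \Mod \cH(G(F), (K_{x_0}, \rho_{x_0})),
\]
so it suffices to identify the right-hand side as an algebra of the shape predicted by Conjecture~\ref{conjecturebyLusztig}.

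Next I would invoke Theorem~\ref{mainthm+AFMO} to obtain the chain of isomorphisms
\[
\cH(G(F), (K_{x_0}, \rho_{x_0})) \simeq \cH(G^{0}(F), (K^0_{x_0}, \rho^{0}_{x_{0}})) \simeq \Coeff[\Wzeroz, \mu^0] \ltimes \cH_{\Coeff}(\Wthetaaffz, q_{\theta}).
\]
Setting $\cH^\fs$ equal to the rightmost algebra, this exhibits $\cH^\fs$ as an extension of the affine Hecke algebra $\cH_{\Coeff}(\Wthetaaffz, q_{\theta})$ by the twisted group algebra $\Coeff[\Wzeroz, \mu^0]$, in the precise sense of \cite[Notation~3.10.8]{2024arXiv240807801A}.

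The content of the conjecture is the assertion that the parameter function $q_{\theta}$ coincides with the parameter function attached to a unipotent Bernstein block. This is exactly what Theorem~\ref{thm:descriptionofhecketheta} provides: by construction in $\S$\ref{subsec:defofGthetaandu}, the reductive group $G_\theta^0$ splits over an unramified extension of $F$, the pair $(K_{\theta, x_0}^0, u_{x_0}^0)$ is a unipotent type for $G_\theta^0$, and $q_\theta$ is its associated parameter function. Combining this identification with the equivalence of categories displayed above yields Conjecture~\ref{conjecturebyLusztig} for $\fs$, and since $\fs$ was arbitrary, for every Bernstein block.

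The only subtlety I would need to address carefully is the passage from the Hecke algebra isomorphism to the categorical statement: namely, that the choice of a unipotent Bernstein block matching $\fs$ (i.e., the pair $(G_\theta^0, u_{\rho_{M^0}})$) is meaningful and not merely an abstract book-keeping device. This is ensured by the explicit construction of $G_\theta$ in $\S$\ref{subsec:defofGthetaandu}, which produces a bona fide $F$-group, together with Theorem~\ref{thm:descriptionofhecketheta}, which realises $\cH_{\Coeff}(\Wthetaaffz, q_{\theta})$ inside the Hecke algebra of a genuine unipotent type. Aside from this, the argument is essentially a formal assembly of the ingredients already built in earlier sections, with no further calculation required.
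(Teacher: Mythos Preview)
Your proposal is correct and follows essentially the same route as the paper: invoke Fintzen's exhaustion theorem together with the Bushnell--Kutzko equivalence to realise $\Rep^\fs(G(F))$ as modules over the Hecke algebra of a Kim--Yu type, and then apply Theorem~\ref{mainthm+AFMO} to identify that Hecke algebra with a semi-direct product whose affine part has unipotent parameters. The paper's proof is terser (it cites \cite[(4.3) Theorem (ii)]{BK-types} explicitly for the categorical equivalence and then says the rest follows from Theorem~\ref{mainthm+AFMO}), but your additional unpacking of why $q_\theta$ is genuinely the parameter function of a unipotent type is consistent with what the paper establishes in \S\ref{subsec:defofGthetaandu} and Theorem~\ref{thm:descriptionofhecketheta}.
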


We note that the conjecture was originally stated for the Hecke algebras constructed by Solleveld in \cite{MR4432237}, where Solleveld constructed a Hecke algebra in a different way from using the theory of types.
Here, we consider the similar statement for the Hecke algebras attached to types.
In the case of Solleveld's Hecke algebras, he reduced Lusztig's conjecture \cite[\S1(a)]{2020arXiv200608535L} to the case of absolutely simple groups over $p$-adic fields, and proved it for most of these groups in \cite{MR4847675}.
For a comparison of the Hecke algebras of Solleveld's and ours, see \cite{MR4775177}.
In particular, the parameters of Solleveld's Hecke algebra coincide with those of ours (see \cite[Corollary~7.21]{MR4775177}).

The parameters of the affine Hecke algebras for unipotent types are explicitly described in \cite{Lusztig95unipotent, Lusztig02unipotentII} (see also \cite[Table\,1]{MR4847675}).

\begin{theorem}
Suppose that $G$ splits over a tamely ramified extension of $F$, and $p$ does not divide the order of the absolute Weyl group of $G$.
Then Conjecture~\ref{conjecturebyLusztig} holds.
\end{theorem}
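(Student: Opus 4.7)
The plan is to combine three previously established ingredients: Fintzen's exhaustion theorem for Kim--Yu types, the Bushnell--Kutzko theory of covers, and Theorem~\ref{mainthm+AFMO}. First, under the hypotheses that $G$ splits over a tamely ramified extension of $F$ and $p \nmid \abs{W}$, the exhaustion result \cite[Theorem~7.12]{Fi-exhaustion} guarantees that for every inertial class $\fs \in \IEC(G)$ there exists a $G$-datum $\Sigma$ in the sense of \cite[Definition~4.1.1]{2024arXiv240807805A} with $\fS(\Sigma) = \{\fs\}$. From $\Sigma$ one obtains an $\fs$-type $(K_{x_{0}}, \rho_{x_{0}})$, and the theory of types \cite{BK-types} yields a categorical equivalence
\[
\Rep^{\fs}(G(F)) \simeq \Mod \cH(G(F), (K_{x_{0}}, \rho_{x_{0}})).
\]

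Next I would apply Theorem~\ref{mainthm+AFMO} to rewrite the right-hand side as
\[
\cH(G(F), (K_{x_{0}}, \rho_{x_{0}})) \simeq \Coeff[\Wzeroz, \mu^{0}] \ltimes \cH_{\Coeff}(\Wthetaaffz, q_{\theta}),
\]
where, by construction via Theorem~\ref{thm:descriptionofhecketheta} applied to the unipotent type $(K_{\theta, x_{0}}, u_{x_{0}})$ for the connected reductive group $G^{0}_{\theta}$, which splits over $F^{\unr}$, the affine part $\cH_{\Coeff}(\Wthetaaffz, q_{\theta})$ is literally the affine Hecke algebra appearing in the explicit description of a unipotent Hecke algebra. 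Setting
\[
\cH^{\fs} := \Coeff[\Wzeroz, \mu^{0}] \ltimes \cH_{\Coeff}(\Wthetaaffz, q_{\theta})
\]
then presents $\cH^{\fs}$ precisely as an extension, by the twisted group algebra $\Coeff[\Wzeroz, \mu^{0}]$, of an affine Hecke algebra whose parameters agree with those of a unipotent Hecke algebra, which is the content of Conjecture~\ref{conjecturebyLusztig}.

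Since every step is a direct invocation of an existing result, there is no genuine obstacle in the final argument; the substantive content lies entirely in Theorem~\ref{mainthm+AFMO} (the match of the affine parts with a unipotent affine Hecke algebra), supplemented by Fintzen's exhaustion theorem (to ensure that every Bernstein block is reached by a Kim--Yu type) and the Bushnell--Kutzko machinery (to translate types into module categories). The only point worth emphasizing is that in our formulation the identification of $q_{\theta}$ with a unipotent parameter function is automatic, because $q_{\theta}$ is by definition the parameter function produced by Theorem~\ref{thm:descriptionofhecketheta} for a unipotent type in $G^{0}_{\theta}$; Lusztig's explicit tables from \cite{Lusztig95unipotent, Lusztig02unipotentII} (cf.\ \cite[Table\,1]{MR4847675}) therefore describe $q_{\theta}$ without any further work.
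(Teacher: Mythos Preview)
Your proposal is correct and follows essentially the same approach as the paper's own proof: invoke \cite[Theorem~7.12]{Fi-exhaustion} together with \cite{BK-types} to obtain for each $\fs$ a Kim--Yu type $(K_{x_0},\rho_{x_0})$ with $\Rep^{\fs}(G(F)) \simeq \Mod \cH(G(F),(K_{x_0},\rho_{x_0}))$, and then apply Theorem~\ref{mainthm+AFMO}. The paper's proof is just a terser version of what you wrote.
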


\begin{proof}
According to the assumption, \cite[Theorem~7.12]{Fi-exhaustion}, and \cite[(4.3) Theorem (ii)]{BK-types}, there exists a $G$-datum $\Sigma = \bigl(
(\overrightarrow{G}, M^0), \overrightarrow{r}, (x_{0}, \{\iota\}), (K_{M^0}, \rho_{M^0}), \overrightarrow{\phi}
\bigr)$ such that $\Rep^\fs(G(F)) \simeq \Mod\cH(G(F), (K_{x_0}, \rho_{x_0}))$, where $(K_{x_0}, \rho_{x_0})$ denotes the $\fs$-type attached to $\Sigma$ by \cite[Section~4.1]{2024arXiv240807805A}.
Then the claim follows from Theorem~\ref{mainthm+AFMO}.
\end{proof}


\providecommand{\bysame}{\leavevmode\hbox to3em{\hrulefill}\thinspace}
\providecommand{\MR}{\relax\ifhmode\unskip\space\fi MR }
\providecommand{\MRhref}[2]{%
  \href{http://www.ams.org/mathscinet-getitem?mr=#1}{#2}
}
\providecommand{\href}[2]{#2}

\end{document}